\DeclareMathAlphabet\mathbfcal{OMS}{cmsy}{b}{n}  
\newcommand{\R}{\mathds{R}}
\renewcommand{\vv}{\mathbf{v}}
\newcommand{\xx}{\mathbf{x}}
\newcommand{\nn}{\mathbf{n}}
\newcommand{\q}{\mathbf{q}}
\newcommand{\dt}{{\Delta t}}
\newcommand{\Q}{\mathbf{Q}}
\newcommand{\I}{\mathbf{I}}
\newcommand{\f}{\mathbf{f}}
\newcommand{\g}{\mathbf{g}}
\newcommand{\B}{\mathbf{B}}
\newcommand{\Fr}{\textnormal{Fr}}
\newcommand{\CFL}{\textnormal{CFL}}
\newcommand{\bzero}{\mathbf{0}}
\newcommand{\w}{\mathbf{w}}
\newcommand{\Vop}{\vmathbb{V}} 
\newcommand{\Cop}{\vmathbb{C}} 
\newtheorem{theorem}{Theorem}
\newcommand{\atanpol}{\text{atan2}}
\newcommand{\erf}{\text{erf}}
\newcommand{\diff}{\text{d}}
\newcommand{\Rey}{\text{Re}}
\newcommand{\xv}{\bm{x}}
\newcommand{\xs}{x}
\newcommand{\ys}{y}
\newcommand{\vs}{\varv}
\newcommand{\hh}{h}
\renewcommand{\P} {{P}}            
\newcommand  {\E} {{e}}            
\newcommand  {\V} {{k}}            
\newcommand{\xvP}{\xv_{\P}}        
\newcommand{\hP}{\hh_{\P}}
\newcommand{\mP}{\ABS{\P}}
\newcommand{\Pset}{{E}}    
\newcommand{\Eset}{{E}}    
\newcommand{\Vset}{{E}}    
\newcommand{\NMB}{N}
\newcommand{\NP}{\NMB_{\Pset}}   
\newcommand{\NE}{\NMB_{\Eset}}   
\newcommand{\NV}{\NMB_{\Vset}}   
\newcommand{\abs}    [1]{|#1|}
\newcommand{\ABS}    [1]{\left|#1\right|}
\newcommand{\norE} {\bm{n}_{\E}}
\newcommand{\norPE}{\bm{n}_{\P,\E}}
\newcommand{\HONE}  {H^1}
\newcommand{\VS}[1] {V^{#1}}
\newcommand{\PS}[1]{\mathbbm{P}_{#1}}
\newcommand{\CS}[1] {C^{#1}}
\newcommand{\Vh}  {\VS{\hh}}
\newcommand{\vsh}{{\vs_{\hh}}}
\newcommand{\XV}{\xv_{\V}}
\newcommand{\REAL}{\mathds{R}}
\newcommand{\NDOF}{N^{\textrm{dof}}_{\P}}
\newcommand{\dof}{\textrm{dof}}
\newcommand{\proj}{\Pi^\nabla_{\P,k}}
\newcommand{\projL}{\Pi^0_{\P,k}}
\newcommand{\projLm}{\Pi^0_{\P,k-1}}
\begin{document}
	
	\begin{frontmatter}
		

		\title{A new family of semi-implicit Finite Volume / Virtual Element methods for incompressible flows on unstructured meshes}
		
		\author[dmi]{Walter Boscheri$^*$}
		\ead{walter.boscheri@unife.it}
		\cortext[cor1]{Corresponding authors}
        
        \author[deps]{Andrea Chiozzi$^*$}
        \ead{andrea.chiozzi@unife.it}
        
        \author[dmi]{Michele Giuliano Carlino}
        \ead{michelegiuliano.carlino@unife.it}
        
        \author[deps]{Giulia Bertaglia}
        \ead{giulia.bertaglia@unife.it}
        
		\address[dmi]{Department of Mathematics and Computer Science, University of Ferrara, Via Niccol\`o Machiavelli 30, 44121 Ferrara, Italy}
		\address[deps]{Department of Environmental and Prevention Sciences, University of	Ferrara, C.so Ercole I d'Este 32, 44121 Ferrara, Italy}
%
\begin{abstract}
We introduce a new family of high order accurate semi-implicit schemes for the solution of non-linear hyperbolic partial differential equations (PDE) on unstructured polygonal meshes. The time discretization is based on a splitting between explicit and implicit terms that may arise either from the multi-scale nature of the governing equations, which involve both slow and fast scales, or in the context of projection methods, where the numerical solution is projected onto the physically meaningful solution manifold. We propose to use a high order finite volume (FV) scheme for the explicit terms, hence ensuring conservation property and robustness across shock waves, while the virtual element method (VEM) is employed to deal with the discretization of the implicit terms, which typically requires an elliptic problem to be solved. The numerical solution is then transferred via suitable L$_2$ projection operators from the FV to the VEM solution space and vice-versa. High order time accuracy is then achieved using the semi-implicit IMEX Runge-Kutta schemes, and the novel schemes are proven to be asymptotic preserving (AP) and well-balanced (WB). As representative models, we choose the shallow water equations (SWE), thus handling multiple time scales characterized by a different Froude number, and the incompressible Navier-Stokes equations (INS), which are solved at the aid of a projection method to satisfy the solenoidal constraint of the velocity field. Furthermore, an implicit discretization for the viscous terms is also devised for the INS model, which is based on the VEM technique. Consequently, the CFL-type stability condition on the maximum admissible time step is based only on the fluid velocity and not on the celerity nor on the viscous eigenvalues. A large suite of test cases demonstrate the accuracy and the capabilities of the new family of schemes to solve relevant benchmarks in the field of incompressible fluids.
\end{abstract}
%
\begin{keyword}
semi-implicit schemes \sep
Finite Volume  \sep
Virtual Element Method \sep
high order in space and time \sep
Asymptotic Preserving \sep
Incompressible flows	
\end{keyword}
\end{frontmatter}


\section{Introduction} \label{sec.intro}
Incompressible flows are mathematically described by non-linear systems of hyperbolic conservation laws, and they cover a wide range of physical phenomena such as environmental, geophysical and meteorological flows, as well as the dynamics of mechanical processes like turbo-machinery. In this article, we consider two different models: i) the shallow water equations (SWE), which are commonly used to simulate storm surges, inundations, dam breaks and river floods; ii) the incompressible Navier-Stokes (INS) equations, which model atmospheric flows, flows around wings and even pressurized pipe flows. Due to the relevant physical applications, there is a great interest in the numerical solution of these models, which however gives rise to some complexities. In particular, the shallow water equations are concerned with slow and fast time scales, which are linked to the non-linear convective and pressure terms, respectively. Moreover, the energy equation in the incompressible Navier-Stokes model reduces to a constraint on the velocity field that must satisfy a divergence-free condition. The use of unstructured grids is very convenient for real-world applications that deal, for example, with ocean bathymetry, wing and turbine shapes, and river morphology, which should be approximated with high accuracy. Moreover, obtaining high order of accuracy also in time represents an important goal in order to achieve accurate results for unsteady problems.

From the numerical viewpoint, the multi-scale nature of the system poses severe restrictions on the maximum admissible time step in order to make the numerical scheme capable of capturing the fast waves. For explicit time discretization, this stability condition can lead to extremely small time steps that ultimately prevent the method to be effectively adopted for real applications. Furthermore, the amount of numerical dissipation introduced by the scheme dramatically augments, spoiling the accuracy of the solution. An effective strategy is based on flux splitting techniques, which separate the slow and fast scales. An explicit time discretization is employed for the slow scales that typically correspond to the non-linear convective terms and are, therefore, related to the fluid velocity, while an implicit time stepping is retained for the fast scales which involve the acoustic waves like the sound speed or the celerity. On the other hand, the divergence-free constraint is rather complicated to be respected on general unstructured meshes with high order of accuracy, therefore pressure-correction methods are typically used, which firstly compute an intermediate velocity field that is not solenoidal, and then a correction is performed in order to project the velocity onto the divergence-free manifold. Furthermore, the viscous terms in the INS model are endowed with a parabolic time step restriction, which is also responsible of very small time steps in viscous dominated flows, which might be conveniently treated implicitly, although this is not trivial on unstructured meshes. 

By all means, this class of numerical methods belongs to the category of implicit-explicit (IMEX) \cite{AscRuuSpi,BP2017,PR_IMEX,BosFil2016} or semi-implicit time schemes \cite{Casulli1990,Casulli1992,StagDG_Dumbser2013}, which imply the solution of an algebraic system for the unknown physical quantity that is discretized implicitly (for example the pressure or the velocity field). This system does normally involve an elliptic equation to be solved on the entire computational domain. The effectiveness of IMEX schemes have been demonstrated in several contexts, including low Mach compressible flows \cite{ParkMunz2005,BosPar2021,SICNS22}, magnetized plasma flows \cite{3splitMHD}, pipe flows \cite{Ioriatti2018,Ioriatti_SWE2019}, as well as free surface \cite{Casulli1999,TumoloBonaventuraRestelli} and atmospheric \cite{TumoloBonaventura,ORLANDO2023115124} applications. If Cartesian meshes are adopted for the spatial discretization \cite{Fambri2017,BosPar2021}, finite difference schemes are likely to be employed for the implicit terms, and the high order extension is straightforward by enlarging the stencil of the finite difference operators. Besides, finite differences can also be used on orthogonal unstructured meshes like Voronoi tessellations \cite{Voronoi,Voronoi-DivFree,ADERFSE} up to second order of accuracy. In this case, the solution of the implicit algebraic system can be carried out quite easily. However, when general unstructured meshes pave the computational domain, the solution of the algebraic system becomes more difficult, since the spatial discretization might be rather complicated. If finite volume methods are used, the inversion of the reconstruction operator should be embedded in the system solver, leading to additional difficulties. Discontinuous Galerkin methods \cite{TavelliSWE2014,TavelliIncNS,ORLANDO2022111653} represent a clever option, because they simultaneously provide high accuracy and compactness of the stencil. Nevertheless, finite volume methods can deal with very general control volumes and they exhibit, by construction, conservation and shock capturing properties, which makes such schemes suitable for the discretization of the explicit slow scale terms, like the non-linear convective terms. In addition, since the implicit system is typically concerned with an elliptic equation on the fast scale quantity, like the pressure, finite element schemes could also be used on both quadrilateral and triangular cells in two space dimensions (2D) or cubic and tetrahedral control volumes in three-dimension (3D). 
This is the reasoning behind the recent emergence of a new class of hybrid finite volume/finite element methods \cite{Hybrid0,Hybrid1,Hybrid2,Busto_SWE2022}, which combine the robustness of the explicit finite volume solver with the versatility of the finite element method. Due to the classical finite element solver, these numerical methods are restricted to simplex meshes in 2D/3D -- i.e. meshes that are topologically dual of triangulations. A first attempt of using a hybrid scheme on general polygonal meshes has been forwarded in \cite{SIFVDG}, where the implicit pressure system is solved at the aid of a discontinuous Galerkin approach that acts on a sub-triangulation of the polygonal tessellation.

The goal of this paper is to design a genuinely hybrid finite volume/finite element scheme on arbitrary shaped grids in 2D. The originality of the proposed scheme lies in the usage of the Virtual Element Method (VEM) to handle the solution of the implicit elliptic equation. The Virtual Element Method (VEM), originally developed in \cite{vem2,vem1,vem4,Beirao_Brezzi_Marini_2016}, is a stabilized Galerkin formulation, originating from mimetic finite difference schemes \cite{Lipnikov2014}, to solve partial differential equations on very general polygonal and polyhedral meshes that overcomes the many difficulties and challenges that are associated
with polygonal finite element formulations. The VEM represents a generalization of the finite element method in which the explicit knowledge of the basis functions
is not needed. Instead, the VEM provides projection operators
of the basis functions onto polynomial spaces of arbitrary degree that allow to discretize and to suitably approximate the bilinear form and the continuous linear
functional deriving from the variational formulation. The discretized bilinear form is conveniently decomposed as the sum of a consistent term, which ensures polynomial consistency, and a correction term, which guarantees stability. The VEM has been extensively proposed for the solution of elliptic problems in many fields of mechanics, such as two- and three-dimensional linear elasticity \cite{vem3, Gain2014, Artioli2020, Dassi2020},  nearly incompressible elasticity \cite{Chi2017}, linear elastodynamics \cite{Antonietti2021}, elastic problems with singularities and discontinuities \cite{BenvenutiChiozzi2019,BenvenutiChiozzi2022}, fracture mechanics \cite{Nguyen2018,Hussein2019}, flow problems in porous media \cite{Borio2021, Borio2022}. Recently, there has been an interest in addressing also steady state partial differential equations (PDE), as for instance the steady Navier-Stokes equations \cite{vem6,Beirao-Lovadina2018,Chernov2021,WANG202163,Beirao_Stokes22,Antonietti22}. Noticeably, in this application, the divergence-free constraint is directly imposed in the definition of the virtual basis, avoiding any projection (or correction) operator. Nevertheless, unsteady problems have still to be faced recurring to the VEM. 

In this paper, we introduce an innovative hybrid finite volume/virtual element solver for hyperbolic PDE on general unstructured polygonal meshes in the context of incompressible flows, solving both the SWE and the INS model. The FV solver is used to treat the explicit terms, while the VEM strategy enters the scheme to solve the implicit algebraic system for the pressure. Furthermore, in the incompressible Navier-Stokes equations, the VEM approach is actually used twice: firstly to implicitly discretize the viscous terms \cite{boscheri2021efficient,SIINS22}, and secondly to perform the projection step ensuring a divergence-free velocity field. Remarkably, the entire scheme works on genuinely polygonal tessellations. Finally, high order in time is ensured by applying a semi-implicit IMEX time stepping technique, which has already been employed also in the context of compressible fluids \cite{SICNS22}.  

The rest of the paper is organized as follows. In Section \ref{sec.pde} the governing equations are presented, studying the multi-scale character of the mathematical models which leads to the flux splitting for SWE or the projection strategy for INS. Section \ref{sec.numscheme} is devoted to the presentation of the numerical scheme, with details on the explicit FV solver and the implicit VEM. The asymptotic and the well-balance property of the scheme are also studied in this section. A large suite of test problems is proposed in Section \ref{sec.numtest}, showing the accuracy and the robustness of the new family of schemes while solving academic benchmarks for SWE and INS systems. Finally, we provide some concluding remarks and an outlook on possible future works in Section \ref{sec.concl}.

\section{Mathematical model} \label{sec.pde}
Incompressible flows are mathematically described by hyperbolic systems of PDE, which can be cast into the following formulation:
\begin{equation}
	\label{eqn.PDE}
	\frac{\partial \Q}{\partial t} + \nabla \cdot \mathbb{F}(\Q) + \mathbb{B} \cdot \nabla \Q = \mathbf{0}\,, \qquad 
	\xx \in \Omega \subset \mathds{R}^d\,, \quad 
	t \in \mathds{R}_0^+\,, \quad 
	\Q \in \Omega_{\Q} \subset \mathds{R}^{\gamma}\,,
\end{equation}
with $d=2$ representing the number of space dimensions. The vector $\xx=(x,y)$ identifies the spatial coordinates in the physical domain $\Omega$ and $t$ is the time. The vector of evolutionary variables $\Q=(q_1,q_2,\ldots,q_{\gamma})$ is defined in the space of the admissible states $\Omega_{\Q}\subset \mathds{R}^{\gamma}$, and $\mathbb{F}(\Q)=\left( \f(\Q),\g(\Q) \right)$ denotes the conservative non-linear flux tensor ($\f$ in $x-$direction, $\g$ in $y-$direction), while $\mathbb{B}=\left(\B_x,\B_y\right)$ contains the purely non-conservative part of the system written in block-matrix notation.  

A widespread model is given by the Incompressible Navier--Stokes (INS) equations, which can be cast into form \eqref{eqn.PDE} with absence of non-conservative terms by defining
\begin{equation}
	\label{eqn.INS}
	\Q = \left( 0, \vv \right), \qquad 
	\mathbb{F}(\Q) = \left( \vv, \, \vv \otimes \vv - \nu \nabla \vv + p\I \right), \qquad \mathbb{B} \cdot \nabla \Q = \bzero,
\end{equation}
introducing the identity matrix $\I$. The velocity field is $\vv(\xx,t) = (u,v)$, with the components along $x-$ and $y-$direction, and  $p(\xx,t)=P(\xx,t)/\rho$ denotes the normalized fluid pressure, where $P(\xx,t)$ is the physical pressure and $\rho$ is the constant fluid density. The kinematic viscosity coefficient is given by $\nu=\mu/\rho$ with $\mu$ being the dynamic viscosity of the fluid. The first equation of the INS model \eqref{eqn.INS} is the divergence-free constraint on the velocity field, which corresponds to the energy equation in the low Mach limit of compressible flows \cite{BosPar2021}.

For situations in which the vertical velocity field is negligible with respect to the horizontal one, incompressible flows are governed by the Shallow Water Equations (SWE). By neglecting friction forces, the SWE fits the general formalism \eqref{eqn.PDE} by setting
\begin{equation}
	\label{eqn.SWE}
	\Q = \left( \eta, \, \q , \, b\right), \qquad 
	\mathbb{F}(\Q) = \left( \q, \, \vv \otimes \q, \, 0 \right), \qquad 
	\mathbb{B}\cdot \nabla \Q = \left( 0,\, gH \nabla \eta, \, 0 \right).
\end{equation}
Here, $\eta(\xx,t) \geq 0$ is the free surface elevation of the fluid and $b(\xx)$ defines the bottom bathymetry, measured with respect to a reference plane, thus the total water depth is given by $H(\xx,t)=\eta(\xx,t)-b(\xx)\geq 0$. The flow discharge is $\q(\xx,t)=H(\xx,t) \vv(\xx,t)$, while $g$ is the gravity acceleration. The SWE model \eqref{eqn.SWE} is written in terms of conservative variables and it can reduce to a conservation law ($\mathbb{B} \cdot \nabla \Q = \bzero$) in the case of flat bottom topography, hence $\eta=H$ and $\mathbb{F}(\Q) = \left( \q, \, \vv \otimes \q + gH^2/2, \, 0 \right)$.

These mathematical models can describe phenomena characterized by different time scales (i.e., multi-scale regimes), leading to the resolution of problems that are notoriously complicated to approximate numerically due to the associated stiffness. For example, low Froude number $\Fr=|\vv|/\sqrt{gH}$ flows in the SWE are characterized by a rather low convective speed $|\vv|$ compared to the acoustic-gravity celerity $\sqrt{gH}$. For the INS equations, the viscous dominated flows induce a stiffness, which is then responsible of a severe CFL-type stability restriction on the time step for explicit time discretizations. To identify the terms responsible for the stiffness, it is convenient to write the governing equations in dimensionless form \cite{BosPar2021,SIFVDG}. Let the variables of 
\eqref{eqn.INS} and \eqref{eqn.SWE} be rescaled as
\begin{equation}
	\label{eqn.Qadim}
	\tilde{\xx}=\frac{\xx}{L_0}, \quad 
	\tilde{t}=\frac{t}{T_0}, \quad 
	\tilde{\vv}=\frac{\vv}{U_0}, \quad 
	\tilde{\eta}=\frac{\eta}{H_0}, \quad 
	\tilde{H}=\frac{H}{H_0}, \quad 
	\tilde{b}=\frac{b}{H_0}, \quad 
	\tilde{\q}=\frac{\q}{U_0 H_0}, \quad
	\tilde{p}=\frac{p L_0}{\nu U_0},
\end{equation}
where $L_0$, $T_0$, $U_0=L_0/T_0$, $H_0$ are the characteristic length, time, velocity and depth, respectively, of the problem under consideration. By omitting the tilde symbol for ease of reading, the dimensionless form of the INS equations \eqref{eqn.INS} writes
\begin{equation}
	\label{eqn.INS_adim1}
	\Q = \left( \mathbf{0}, \, \vv\right), \qquad 
	\mathbb{F}(\Q) = \left(\vv,\, \vv \otimes \vv - \frac1{\Rey} \left(\nabla \vv - p\I\right)\right),
\end{equation}
where the Reynolds number $\Rey=U_0L_0/\nu$ appears in front of the viscosity and pressure terms of the momentum equation. 

Analogously, we can derive the dimensionless form of the SWE system \eqref{eqn.SWE} that is given by
\begin{equation}
	\label{eqn.SWE_adim}
	\Q = \left( \eta, \, \q , \, b\right), \qquad 
	\mathbb{F}(\Q) = \left( \q,\, \vv \otimes \q, \, \mathbf{0} \right), \qquad 
	\mathbb{B}\cdot \nabla \Q = \left( 0,\, \frac{H}{\Fr^2} \nabla \eta, \, 0 \right),
\end{equation}
where it can be observed the presence of the Froude number $\Fr = U_0/\sqrt{gH_0}$ in the pressure gradient term. 

In both cases, the dimensionless systems can be characterized by a stiffness parameter $\varepsilon$, which is the reference Reynolds number $\varepsilon=\Rey$ in \eqref{eqn.INS_adim1} or the reference Froude number $\varepsilon=\Fr$ in \eqref{eqn.SWE_adim}. The asymptotic limit of the models can consequently be studied using the asymptotic parameter $\varepsilon$, as it will be carried out in Section \ref{ssec.AP}.

%

\subsection{Flux splitting}
\label{sec.splitting}
In order to develop an efficient and accurate numerical method to solve multi-scale problems as those discussed in the previous section, it is convenient to treat the governing equations with a flux splitting approach, which is based on the separation of the different time scales, fast and slow, in terms of the flux terms (either conservative or non-conservative). In the literature, this strategy has already been followed to treat incompressible flow models \cite{Casulli1990,SIFVDG,SIINS22,ToroIJNMF22,Vater2018} but also compressible fluids \cite{Toro_Vazquez12,BosPar2021,Dumbser_Casulli16}. 
The idea behind this technique consists in dividing the system of governing equations into two sub-systems, where one contains the fluxes dependent on the scaling parameter $\varepsilon$ and the other one does not. This procedure basically allows to highlight the terms that will be discretized explicitly in time with respect to those that, due to the $\varepsilon-$dependence, will encounter an implicit discretization. 

The flux splitting applied to a generic system of the type \eqref{eqn.PDE} reads
\begin{equation}
\label{eqn.PDE_split}
\frac{\partial \Q}{\partial t} + \nabla \cdot \mathbb{F}_E(\Q) + \nabla \cdot \mathbb{F}_I(\Q) + \mathbb{B}_E \cdot \nabla \Q + \mathbb{B}_I \cdot \nabla \Q = \mathbf{0},
\end{equation}
being $\mathbb{F} = \mathbb{F}_E + \mathbb{F}_I$, $\mathbb{B} = \mathbb{B}_E + \mathbb{B}_I$. The subscripts ``$E$'' and ``$I$'' distinguish the explicit terms related to convective-type phenomena, from the implicit contribution concerned with the fluid pressure and viscosity. As a consequence, we obtain a partitioned system \cite{Hofer} consisting in the following sub-systems.
\begin{itemize}
\item Convective sub-system:
\begin{equation}
\label{eqn.PDE_conv}
\frac{\partial \Q}{\partial t} + \nabla \cdot \mathbb{F}_E(\Q) + \mathbb{B}_E \cdot \nabla \Q = \mathbf{0};
\end{equation}
\item Pressure-viscosity sub-system:
\begin{equation}
\label{eqn.PDE_visco}
\frac{\partial \Q}{\partial t} + \nabla \cdot \mathbb{F}_I(\Q) + \mathbb{B}_I \cdot \nabla \Q = \mathbf{0}.
\end{equation}
\end{itemize}

In the case of the INS equations \eqref{eqn.INS_adim1}, the scaling parameter is represented by the Reynolds number, $\varepsilon = \Rey$, implying that all terms related to the Reynolds number should be treated implicitly. Therefore, we may opt for the following splitting:
\begin{equation}
\label{eqn.INS_split}
\mathbb{F}_E(\Q) = \left(\mathbf{0},\, \vv \otimes \vv \right), \qquad  
\mathbb{F}_I(\Q) = \left(\vv, \, -\frac1{\varepsilon}\left(\nu \nabla \vv - p\I\right)\right),
\end{equation}
which simultaneously allows the time step stability condition to be independent from the parabolic viscous terms and also to respect the divergence-free constraint on the velocity field. The eigenvalues of the explicit sub-system in normal direction $\nn = (n_x,n_y)$ are simply given by
\begin{equation}
	\label{eqn.INS_eig}
	\lambda_1 = 0, \qquad \lambda_{2,3,4} = \vv \cdot \nn,
\end{equation}
thus the time step is limited by the fluid velocity and not by the viscosity coefficient with parabolic cell size.

For the specific case of the SWE \eqref{eqn.SWE_adim}, we adopt the classical flux splitting originally proposed in \cite{Casulli1990}, hence obtaining
\begin{equation}
	\label{eqn.SWE_split}
	\mathbb{F}_E(\Q) = \left( \mathbf{0},\, \vv \otimes \q, \, \mathbf{0} \right), \qquad 
	\mathbb{F}_I(\Q) = \left( \q,\, \mathbf{0}, \, \mathbf{0} \right), \qquad  
	\mathbb{B}_I \cdot \nabla \Q  = \left( 0,\, \frac{H}{\varepsilon^2} \nabla \eta, \, 0 \right).
\end{equation}
Even in this case, the eigenvalues of the explicit sub-system only involve the convective fluid velocity:
\begin{equation}
	\label{eqn.SWE_eig}
	\lambda_{1,2} = 0, \qquad
	\lambda_{3} = \vv \cdot \nn, \qquad
	\lambda_{4} = 2\vv \cdot \nn.
\end{equation}
Indeed, we recall that the scaling parameter coincides with the Froude number, hence $\varepsilon = \Fr$, and all the related terms are taken implicitly.

For both the models, on a computational mesh with characteristic mesh size $h$, the time step is limited by a classical CFL stability condition that is only based on the maximum convective eigenvalue, reading
\begin{equation}
	\label{eqn.timestep} 
	\dt \leq \CFL \min \limits_{\Omega} \frac{h}{\max|\lambda|}.
\end{equation}
The above condition yields a milder stability restriction, especially in the asymptotic regime when $\varepsilon \to 0$. In particular, the time step becomes independent of the stiffness parameter, which means that simulations of flows defined by any Reynolds and Froude number can be run with the same computational efficiency. Moreover, since the terms related to the stiffness are treated implicitly, the numerical dissipation is solely proportional to the fluid speed, yielding a numerical scheme that is particularly suited for applications in the asymptotic (zero-relaxation) limit, i.e., when $\varepsilon \to 0$. We remark here that this type of problems could not be simulated adopting a purely explicit scheme in time \cite{GH,Guillard,Dellacherie1}.

%
\section{Numerical scheme} \label{sec.numscheme}
We start by presenting the discretization of the computational domain in space and time. Next, we define the polynomial spaces adopted for the discretization of the numerical solution, and we present the first order semi-discrete scheme in time, analysing the asymptotic and well-balance properties. Finally, the high order extension in space and time of the new schemes are introduced. 

\subsection{Discretization of the time computational domain}
The time coordinate is defined in the time interval $t \in [0, t_f ]$, where $t_f \in \mathds{R}^+_0$ represents the
final time. A sequence of discrete points $t^n$ approximate the temporal computational domain such that $t \in [t^n; t^{n+1}]$, hence
\begin{equation}
	t^{n+1}=t^n + \dt,
\end{equation}
where the time step $\dt=t^{n+1}-t^n$ is determined at each time iteration according to the stability condition \eqref{eqn.timestep}.

\subsection{Discretization of the space computational domain}
The space computational domain $\Omega \subseteq \REAL^2$ is discretized by a set of non-overlapping polygonal control volumes $P_i$ with boundary $\partial P_i$, surface $|P_i|$ and characteristic mesh size $h_i=\sqrt{|P_i|}$. The total number of cells is $N_P$, thus $i=1,\ldots,N_P$, and the union of all elements is called the tessellation
$\mathcal{T}_{\Omega}$ of the domain 
\begin{equation}
	\mathcal{T}_{\Omega} = \bigcup \limits_{i=1}^{N_P}{P_i}. 
	\label{trian}
\end{equation}	
To make the notation easier, we drop the subscript $i$, bearing in mind that all the geometric quantities are specific to each polygon $P_i$. The barycentre ${\xvP}:=(\xs_{\P},\ys_{\P})$ of each cell is computed as
\begin{equation}
	\xvP = \frac{1}{\mP} \int_{P} \xx \, d\xx.
\end{equation}	
The number of vertices of element $\P$ is denoted by $\NP$.
The boundary of $\P$ is formed by straight edges, and the polygon $\P$ has $\NE$ edges.
The vertices of the polygonal element $\P$ are oriented in counter-clockwise order and their coordinates are denoted by $\XV := (\xs_{\V},\ys_{\V})$, $\V = 1,2,\ldots,\NV$. For convenience, we also use $\XV$ as a label for the $k$-th vertex.
We denote the unit normal vector to edge $\E \in \partial \P$ by $\norPE$. Each vector $\norPE$ points out of $\P$.
We assume that the orientation of all edges is fixed once and for all, so that we can unambiguously introduce
$\norE$, the unit normal vector to edge $\E$, which is independent of the elements $\P$ that share the common edge $\E$.
Moreover, the following assumptions on the regularity of the mesh are adopted.

All the integrals appearing in the numerical scheme are evaluated relying on quadrature formulae of suitable accuracy. For boundary integrals we use Gaussian quadrature rules \cite{stroud}, while for volume integrals we adopt the efficient numerical integration proposed in \cite{SOMMARIVA2009886,SOMMARIVA2020}. 

\paragraph{Mesh regularity assumptions}
There exists a positive constant $\varrho$ such that for every polygonal element $\P\in\mathcal{T}_h$ it holds that
\begin{description}
	\item[$(i)$] $\P$ is star-shaped with respect to a disk with radius greater than
	$\varrho \max \limits_{\mathcal{T}_h} \hP$; 
	\item[$(ii)$] for every edge $\E\in\partial\P$ it holds that
	$|e| \geq\varrho\hP$, with $|e|$ being the edge length.
\end{description}

	The restriction of $\P$ being star-shaped in $(i)$ implies that all the elements
	are simply connected subsets of $\REAL^{2}$ and that they have a finite number of vertices	and edges.
	The scaling assumption in $(ii)$ implies that the number of edges on the boundary of any element is uniformly bounded over the whole mesh 	$\mathcal{T}_h$, thus avoiding collapsing vertexes that would give rise to degenerate edges.

\subsection{Conforming virtual element space}
\label{Sec_Vh}
On every polygonal element $\P$ with boundary $\partial\P$, we define
the standard local virtual element space of order $k$, $\Vh_k(\P)$, as
\begin{align}\label{eq:Vh:def}
	\Vh_k(\P) = \Big\{\,
	\vsh\in\HONE(\P)\,:\, \Delta\vsh \in\PS{k-2}(\P) , \
	\varv_{h}|_{\partial\P}\in\CS{0}(\partial\P), \
	\varv_{h}|_{\E}\in\PS{k}(\E)\,\,\forall\E\in\partial\P \, \Big\},
\end{align}
where $\PS{k}(\P)$ denotes the space of polynomials of degree less than or equal to $k$ on $\P$. 
A straightforward consequence of definition~\eqref{eq:Vh:def} is that
$\PS{k}(\P)$ is a subspace of $\Vh(\P)$. We define
\begin{align}\label{eq:nk} 
	n_k := \textrm{dim}  \Vh_k(\P) = \frac{(k+1)(k+2)}{2}.
\end{align}
Let us introduce the multi-index $\bm{\kappa}=(\kappa_1,\kappa_2)$, with the usual notation $\abs{\bm{\kappa}}=\kappa_1+\kappa_2$. Moreover, if $\bm{x}=(x_1,x_2)$, then $\bm{x}^{\bm{\kappa}}=(x_1^{\kappa_1},x_2^{\kappa_2})$. We define the scaled monomials $m_{\bm{\kappa}}$ of degree  equal to $\abs{\bm{\kappa}}$ as
\begin{align}\label{eq:m_alpha} 
	m_{\bm{\kappa}} = \Big(\frac{\xv-\xvP}{\hP}\Big)^{\bm{\kappa}}.
\end{align}
Hence, we define the set $\mathcal{M}_k(\P)$ of scaled monomials of degree less than or equal to $k$,
\begin{align}\label{eq:M_k} 
	\mathcal{M}_k(\P) := \{ m_{\bm{\kappa}} : 0 \leq \abs{\bm{\kappa}} \leq k \},
\end{align}
and recognize that $\mathcal{M}_k(\P)$ is a basis for $\PS{k}(\P)$. We also use the $\alpha$ subscript to denote the $\alpha$-th scaled monomial $m_{\alpha}$ of $\mathcal{M}_k(\P)$. 

As shown in \cite{vem2}, each function $\vsh$ in $\Vh_k(\P)$ is uniquely determined by the following degrees of freedom:
\begin{itemize}
	\item the $\NP$ values of $\vsh$ at the vertices of $\P$;
	\item the $\NP(k-1)$ values of $\vsh$ on the $(k-1)$ Gauss-Lobatto quadrature points on each edge $\E$;  
	\item the moments up to degree $k-2$ of $\vsh$ in $\P$, defined as
	\begin{equation}
		\label{eqn.dof_mom}
		\frac{1}{\abs{\P}} \int_{\P} \vsh m_{\alpha} \, d\xx, \qquad \alpha = 1,...,n_{k-2},
	\end{equation}
	where $n_{k-2}$ is computed from \eqref{eq:nk} as the dimension of $\PS{k-2}(\P)$.
\end{itemize}
It follows that the dimension of $\Vh_k(\P)$ is
\begin{align}\label{eq:dim_Vh} 
	\textrm{dim}\Vh_k(\P) = \NP k + \frac{(k-1)k}{2},
\end{align}
thus we denote by $\NDOF = \textrm{dim}\Vh_k(\P)$ the number of degrees of freedoms of each  $\vsh \in \Vh_k(\P)$. We also denote by $\dof_i(\vsh)$ the value of the $i$-th degree of freedom of $\vsh$. Since $\{\varphi\}_{i=1}^{\NDOF}$ is the canonical basis for $\Vh_k(\P)$, we can represent each $\vsh \in \Vh_k(\P)$ in terms of its degrees of freedom by means of a Lagrange interpolation:
\begin{align}\label{eq:vh_repr} 
	\vsh = \sum_{i=1}^{\NDOF} \dof_i(\vsh)\varphi_i,
\end{align}
where the usual interpolation property holds true:
\begin{equation}
	\dof_i(\varphi_j) = \delta_{ij}, \qquad i,j=1, \ldots, \NDOF.
\end{equation}
Given the mesh $\mathcal{T}_{\Omega}$, the global conforming virtual element space $\Vh_k \in H^1(\Omega)$ is obtained by gluing together all the elemental spaces $\Vh_k(\P)$, i.e.:
\begin{align}\label{eq:Vh_global:def}
	\Vh_k = \Big\{\,
	\vsh\in\HONE(\P)\,:\, \varv_{h}|_{\P}\in\Vh_k(\P)\,\,\forall\P\in\mathcal{T}_{\Omega} \, \Big\}.
\end{align}

\subsection{Elliptic projection operator}
Since functions in the virtual element space $\Vh_k(\P)$ are not explicitly known, we need to resort to a projection operator $\proj: \Vh_k(\P) \rightarrow \PS{k}(\P)$ that maps functions from the virtual element space to the polynomial space of degree $k$, which is indeed known. Such projector will play a fundamental role in what follows and it is defined up to a constant by the orthogonality condition
\begin{align}\label{eq:vem_proj} 
	\int_{\P} \nabla p_k \cdot \nabla (\proj\vsh - \vsh) \, d\xx = 0, \quad \forall p_k \in \PS{k}(\P).
\end{align}
In order to fix the projection on constants as well, we need to separately define the projector operator $P_0: \Vh_k(\P) \rightarrow \PS{0}(\P)$ requiring that
\begin{align}\label{eq:vem_proj_const} 
	P_0(\proj\vsh - \vsh) = 0.
\end{align}
Among the many possible options for $P_0$, as forwarded in \cite{vem2} we choose to define
\begin{align}\label{eq:vem_proj_0} 
	P_0 \vsh &:= \frac{1}{\NV} \sum_{i=1}^{\NV} \vsh(\xv_i) \quad \textrm{for} \enskip k = 1, \\
	P_0 \vsh &:= \frac{1}{\abs{\P}}\int_{\P} \vsh \, d\xx \quad \textrm{for} \enskip k \geq 2.
\end{align}
It turns out that the projection $\proj \vsh$ is computable using only the degrees of freedom of $\vsh$. Indeed, since $\mathcal{M}_k(\P)$ is a basis for $\PS{k}(\P)$, from \eqref{eq:vem_proj} we can write
\begin{align}\label{eq:vem_proj2} 
	\int_{\P} \nabla m_\alpha \cdot \nabla (\proj\vsh - \vsh) \, d\xx = 0, \quad \alpha = 1,...,n_k, 
\end{align}
and since $\proj\vsh$ is an element of $\PS{k}(\P)$, it can be represented in the basis $\mathcal{M}_k(\P)$ as well:
\begin{align}\label{eq:vem_proj2b} 
	\proj\vsh = \sum_{\beta=1}^{n_k} s^\beta m_\beta.
\end{align}
Consequently, equation \eqref{eq:vem_proj2} becomes:
\begin{align}\label{eq:vem_proj3} 
	\sum_{\beta=1}^{n_k} s^\beta \int_{\P} \nabla m_\alpha \cdot \nabla m_\beta \, d\xx = \int_{\P} \nabla m_\alpha \cdot \nabla \vsh \, d\xx, \quad \alpha = 1,...,n_k,
\end{align}
which constitutes a linear system of $n_k$ equations in the $n_k$ unknowns $s^\beta$. The system is still indeterminate though, and such indeterminacy is overcome by adding conditions \eqref{eq:vem_proj_0}:
\begin{align}\label{eq:vem_proj_aux} 
	\sum_{\beta=1}^{n_k} s^\beta P_0 m_\beta = P_0 \vsh.
\end{align}
Now, the linear system arising from \eqref{eq:vem_proj3} and \eqref{eq:vem_proj_aux} is solvable. In fact, the left-hand side of \eqref{eq:vem_proj3} entails the integration of known polynomials over $\P$. As for the left-hand side, applying integration by parts we get
\begin{align}\label{eq:vem_proj4} 
	\int_{\P} \nabla m_\alpha \cdot \nabla \vsh \, d\xx = -\int_{\P}\Delta m_\alpha \vsh \, d\xx + \int_{\partial \P} \frac{\partial m_\alpha}{\partial n} \vsh \, dS.
\end{align}
The first term is explicitly computed from the internal degrees of freedom of $\vsh$, while the second term contains a polynomial integrand which can be exactly integrated by evaluating it at the Gauss-Lobatto quadrature points along each edge $\E$, where the edge degrees of freedom are located. Indeed, we recall that the basis functions $\{\varphi\}_{i=1}^{\NDOF}$, which span $\vsh$, are of Lagrangian type according to \eqref{eq:vh_repr}.

A projection $\proj\vsh$ can be easily obtained by computing the projections $\proj\varphi_i$ of each of the virtual basis functions of $\Vh_k(\P)$. In fact, we have
\begin{align*} 
	\proj\varphi_i = \sum_{\alpha=1}^{n_k} s^\alpha_i m_\alpha, \quad i = 1,...,\NDOF,
\end{align*}
where $s^\alpha_i$ are the components of the solution vector of the system \eqref{eq:vem_proj3} with the additional condition \eqref{eq:vem_proj_aux}, in which we choose $\vsh = \varphi_i$. In matrix form, this system writes
\begin{align}\label{eq:proj_system} 
	\mathbf{G} \accentset{\star}{\mathbf{\Pi}}^{\nabla}_k = \mathbf{B},
\end{align}
where $\mathbf{G}$ and $\mathbf{B}$ are respectively $n_k \times n_k$ and   $n_k \times \NDOF$ matrices such that:
\begin{align}\label{gb_matrices} 
	(\mathbf{G})_{\alpha \beta} &= P_0 m_\beta, \quad & \textrm{for} \enskip \alpha = 1, & \\
	(\mathbf{G})_{\alpha \beta} &= \int_{\P}\nabla m_\alpha \cdot \nabla m_\beta \, d\xx, \quad & \textrm{for} \enskip  \alpha \geq 2, & \\
	(\mathbf{B})_{\alpha  i} &= P_0 \varphi_\beta, \quad & \textrm{for} \enskip \alpha = 1, & \enskip i = 1,...,\NDOF \\
	(\mathbf{B})_{\alpha  i} &= \int_{\P} \nabla m_\alpha \cdot \nabla \varphi_i \, d\xx, \quad & \textrm{for} \enskip \alpha \geq 2, & \enskip i = 1,...,\NDOF,
\end{align}
and $\accentset{\star}{\mathbf{\Pi}}^{\nabla}_k$ is the $n_k \times \NDOF$ matrix representation of operator $\proj$ in the basis $\mathcal{M}_k(\P)$, so that $(\accentset{\star}{\mathbf{\Pi}}^{\nabla}_k)_{\alpha i} = s_i^\alpha$. 

It is also possible to provide a matrix representation $\mathbf{\Pi}^\nabla_k$ of operator $\proj$ in the canonical basis of $\Vh_k(\P)$, by introducing the $\NDOF \times n_k$ matrix $\mathbf{D}$ that performs a change of basis:
\begin{align}\label{D_matrix} 
	(\mathbf{D})_{i \alpha} = \dof_i (m_\alpha), \quad i = 1,...,\NDOF, \quad \alpha = 1,...,n_k.
\end{align}
Thus, $\mathbf{\Pi}^\nabla_k$ is the $\NDOF \times \NDOF$ matrix computed as
\begin{align}\label{Pi_nabla} 
	\mathbf{\Pi}^\nabla_k = \mathbf{D}  \accentset{\star}{\mathbf{\Pi}}^{\nabla}_k = \mathbf{D}\mathbf{G}^{-1}\mathbf{B}.
\end{align}

\subsection{$L_2$ projection operator}
\label{L2_proj_subs}
We also need to resort to the $L_2$ projection operator $\projL: \Vh_k(\P) \rightarrow \PS{k}(\P)$ that maps functions from the virtual element space to the polynomial space of degree $k$. Such projector is defined by the orthogonality condition
\begin{align}\label{eq:vem_projL1} 
	\int_{\P} p_k (\projL\vsh - \vsh) \, d\xx = 0, \quad \forall p_k \in \PS{k}(\P).
\end{align}
The projection $\projL \vsh$ can be evaluated using only the degrees of freedom of $\vsh$. Indeed, since $\projL\vsh$ is an element of $\PS{k}(\P)$, it can be represented in the basis $\mathcal{M}_k(\P)$:
\begin{align}\label{eq:vem_projL2} 
	\projL\vsh = \sum_{\beta=1}^{n_k} r^\beta m_\beta.
\end{align}
From this, it follows that condition \eqref{eq:vem_projL1} becomes
\begin{align}\label{eq:vem_projL3} 
	\sum_{\beta=1}^{n_k} t^\beta \int_{\P} m_\alpha m_\beta \, d\xx = \int_{\P} m_\alpha \vsh \, d\xx, \quad \alpha = 1,...,n_k, 
\end{align}
which is a linear system of $n_k$ equations in the $n_k$ unknowns $r^\beta$. While the left-hand side of \eqref{eq:vem_projL3} is readily computable, involving the integration of known polynomials over $\P$, the right-hand side clearly is not. In fact, for a general $k$ we know moments as degrees of freedom only for $m_\alpha \in \PS{k-2}(\P)$. The idea here is to replace $\vsh$ in the right-hand side with its elliptic projection $\proj \vsh$ only for monomials of degree $k$ and $k-1$. Such system can be written in matrix form to give the $n_k \times \NDOF$ matrix representation $\accentset{\star}{\mathbf{\Pi}}^0_k$ of $\projL$ as follows:
\begin{align}\label{eq:proj_system2} 
	\mathbf{H} \accentset{\star}{\mathbf{\Pi}}^0_k = \mathbf{C}.
\end{align}
The matrices $\mathbf{H}$ and $\mathbf{C}$ are of dimensions $n_k \times n_k$ and $n_k \times \NDOF$, respectively, and they are explicitly defined as
\begin{subequations}
\begin{align} 
	(\mathbf{H})_{\alpha \beta} &= \int_{\P}m_\alpha m_\beta \, d\xx, \qquad \quad \alpha,\beta = 1,...,n_k, \label{h_matrix} \\
	(\mathbf{C})_{\alpha  i} &= 
	\left\{\begin{array}{@{}l@{}l@{}l@{}}
		\int_{\P}m_\alpha \varphi_i \, d\xx, & \quad \alpha= 1,...,n_{k-2}, &\enskip i = 1,...,\NDOF, \\ [4pt]
		\int_{\P}m_\alpha \proj \varphi_i \, d\xx, & \quad n_{k-2}+1 \leq \alpha \leq n_k, &\enskip i = 1,...,\NDOF. \label{c_matrix}
	\end{array}\right. 
\end{align}
\end{subequations}
It is also possible to provide the matrix representation $\mathbf{\Pi}^0_k$ of operator $\projL$ in the canonical basis of $\Vh_k(\P)$, by using the change of basis matrix $\mathbf{D}$ \eqref{D_matrix}, which is
\begin{align}\label{Pi_0} 
	\mathbf{C} = \mathbf{D} \accentset{\star}{\mathbf{\Pi}}^0_k = \mathbf{H}\mathbf{G}^{-1}\mathbf{C}.
\end{align}

In a similar fashion, we can also compute the matrix representations $\mathbf{\Pi}^0_{k-1}$ and $\accentset{\star}{\mathbf{\Pi}}^0_{k-1}$ of the $L_2$ projection onto $\PS{k-1}(\P)$. To this end, we solve the linear system
\begin{align}\label{eq:proj_system3} 
	\mathbf{H}' \accentset{\star}{\mathbf{\Pi}}^0_{k-1} = \mathbf{C}',
\end{align}
where the $n_{k-1} \times n_{k-1}$ matrix $\mathbf{H}'$ is obtained by taking the first $n_{k-1}$ rows and columns of matrix $\mathbf{H}$, defined in \eqref{h_matrix}, and the $n_{k-1} \times \NDOF$ matrix $\mathbf{C}'$ is obtained by taking the first $n_{k-1}$ rows of matrix $\mathbf{C}$, defined in \eqref{c_matrix}. 

Finally, we obtain
\begin{align}\label{eq:proj_system4} 
	\mathbf{\Pi}^0_{k-1} = \mathbf{D}'\accentset{\star}{\mathbf{\Pi}}^0_{k-1},
\end{align}
where the $\NDOF \times n_{k-1}$ matrix $\mathbf{D}'$ is computed by taking the first $n_{k-1}$ columns of matrix $\mathbf{D}$, defined in \eqref{D_matrix}.
\subsection{Finite volume space}
In this work, the leading numerical scheme relies on the finite volume paradigm, which is used to discretize the explicit terms in \eqref{eqn.PDE_split} and to store the numerical solution within each control volume $P_i$ for every time level $t^n$. Finite volume schemes represent the vector of conserved variables $\Q$ as cell averages refereed to each polygonal element $P$:
\begin{equation}
	\label{eqn.cellAv}
	\Q_i^n:=\frac{1}{|P_i|} \int \limits_{P_i} \Q(\xx,t^n) \, d\xx,
\end{equation}
which obviously belong to the space of polynomials of degree zero ($k=0$), since they are constant states within each cell. To make notation easier, we omit the subscript $i$ referring to the cell $P_i$ and the superscript $n$ pointing to the current time level.

To achieve higher order of accuracy, a non-linear reconstruction of the finite volume solution \eqref{eqn.cellAv} is carried out, hence obtaining an approximation of the solution of degree $k>0$ which is addressed with high order polynomials $\w(\xx)$ while avoiding spurious oscillations in the proximity of eventual discontinuities (further details on the high order reconstruction in the FV space will be given in Section \ref{sec.FVM_ex}). Consequently, the reconstructed solution lies in the space of polynomials $\PS{k}$ and it is written in terms of an expansion of a set of modal basis functions $\beta_{\ell}$ with corresponding $n_k$ degrees of freedom $\hat{\w}_{\ell}$:
\begin{equation}
	\label{eqn.recPoly}
	\w(\xx) = \sum \limits_{\ell=1}^{n_k} \beta_{\ell}(\xx) \, \hat{\w}_{\ell}.
\end{equation} 
The basis functions $\beta_l$ are given by a set of conservative Taylor functions that can be directly retrieved from the monomials \eqref{eq:m_alpha}:
\begin{equation}
	\label{eqn.Voronoi_modal}
	\beta_{\ell}(\xx)|_{P} = m_{\bm{\kappa}} - \frac{c}{|P|}\int_{P} m_{\bm{\kappa}} \, d\xx.
\end{equation}
The coefficient $c$ ensures the conservation property of the basis functions, thus we have $c = 0$ for $|\bm{\kappa}|=0$, and $c=1$ for $|\bm{\kappa}|>0$. We remark that the conservation property of the basis functions \eqref{eqn.Voronoi_modal} means that
\begin{equation}
	\label{eqn.modal_cons}
	\frac{1}{|P|}\int_{P} \sum_{l=1}^{n_k}\beta_{\ell}(\xx) \, d\xx = 1,
\end{equation}
thus, the first degree of freedom of each element $P$ (i.e. the one identified by $\ell=1$) represents the cell average value, in the finite volume sense. 

\subsection{Semi-discrete semi-implicit IMEX scheme in time}
Concerning the temporal discretization of the equations, let us start by presenting the first order in time semi-discrete scheme according to the splitting \eqref{eqn.PDE_split}, which reads:
\begin{equation}
\label{eqn.FOtime}
\frac{\Q^{n+1} - \Q^{n}}{\dt} + \nabla \cdot \mathbb{F}_E(\Q^{n}) + \nabla \cdot \mathbb{F}_I(\Q^{n+1}) + \mathbb{B}_E(\Q^{n}) \cdot \nabla \Q^{n} + \mathbb{B}_I(\Q^{n}) \cdot \nabla \Q^{n+1} = \mathbf{0}.
\end{equation}

\paragraph{Incompressible Navier-Stokes model} For the INS equations \eqref{eqn.INS}, the semi-discrete scheme expressed in \eqref{eqn.FOtime} leads to
\begin{subequations}
	\label{eqn.INS_FOtime}
	\begin{align}
		\nabla \cdot \vv^{n+1} &= 0,  \label{eqn.INS_FOtime_cont}\\
		\frac{\vv^{n+1}-\vv^n}{\dt}  + \nabla \cdot (\vv^n \otimes \vv^n) - \nu \Delta \vv^{n+1} + \nabla p^{n+1} &= \boldsymbol{0}, \label{eqn.INS_FOtime_mom}
	\end{align}
\end{subequations}
which is solved as follows. From the momentum equation we obtain a provisional velocity field $\vv^*$, which already contains the computation of both explicit convection and implicit viscous contribution, that is given by
\begin{equation}
	\label{eqn.vvisc}
	\vv^*- \dt \, \nu \Delta \vv^{*} = F_{\vv}^n - \dt \, \nabla p^{n}, \qquad F_{\vv}^n = \vv^n - \dt \, \nabla \cdot \left( \vv^n \otimes \vv^n \right).
\end{equation}
Notice that the discretization of the convection terms is simply referred to with $F_{\vv}^n$ and that we use the pressure gradient at time $t^n$ to supplement the provisional velocity with physical information about the pressure field, as proposed in \cite{Tavelli2014}. Once the velocity field $\vv^*$ is determined upon the solution of \eqref{eqn.vvisc}, the momentum equation \eqref{eqn.INS_FOtime_mom} is given by
\begin{equation}
	\label{eqn.vnewINS}
	\frac{\vv^{n+1}-\vv^*}{\dt} + \nabla p^{n+1} - \nabla p^{n} = \bzero.
\end{equation}
The above equation can be inserted into the continuity equation \eqref{eqn.INS_FOtime_cont} yielding an elliptic equation for the unknown pressure $p^{n+1}$:
\begin{equation}
	\label{eqn.pnew_INS}
     \dt \, \Delta p^{n+1} = \nabla \cdot \vv^{*} + \dt \, \Delta p^n.
\end{equation}
The pressure system \eqref{eqn.pnew_INS} is solved at the aid of the GMRES method \cite{GMRES}, where we prescribe a tolerance $\delta_0=10^{-12}$ to stop the iterative procedure. Once the new pressure is known, the velocity field is updated according to \eqref{eqn.vnewINS}.

\begin{theorem}\label{th_ins_1}(Divergence-free constraint). 
	Assuming periodic boundary conditions on $\partial \Omega \in \R$, the semi-discrete scheme \eqref{eqn.INS_FOtime}-\eqref{eqn.vvisc} satisfies the divergence-free constraint on the velocity field:
	\begin{equation}
		\label{eqn.th_divfree}
		\nabla \cdot \vv^{n+1} = 0.
	\end{equation}
\end{theorem}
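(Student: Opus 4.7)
The plan is to derive the claim by an essentially algebraic manipulation of the pressure-correction step, using the Poisson equation for the pressure as the key substitution. First, I would apply the divergence operator to the velocity update formula \eqref{eqn.vnewINS}, obtaining
\begin{equation*}
\nabla \cdot \vv^{n+1} \;=\; \nabla \cdot \vv^{*} \;-\; \dt\bigl(\Delta p^{n+1} - \Delta p^{n}\bigr),
\end{equation*}
where I use that divergence and gradient commute with the Laplacian at the continuous level. Next, I would rewrite the pressure Poisson equation \eqref{eqn.pnew_INS} in the equivalent form $\dt\bigl(\Delta p^{n+1} - \Delta p^{n}\bigr) = \nabla \cdot \vv^{*}$, and substitute it into the right-hand side above. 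The two terms cancel exactly, yielding $\nabla \cdot \vv^{n+1} = 0$, which is precisely \eqref{eqn.th_divfree}.

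The role of the periodic boundary condition is not in this algebraic cancellation itself, but in guaranteeing that the intermediate elliptic step is well posed. Specifically, the Poisson problem \eqref{eqn.pnew_INS} admits a solution only when the right-hand side has vanishing integral over $\Omega$; under periodic boundary conditions the divergence theorem gives $\int_{\Omega}\nabla\cdot\vv^{*}\,d\xx = 0$ and $\int_{\Omega}\Delta p^{n}\,d\xx = 0$, so the compatibility condition is automatically satisfied, and $p^{n+1}$ is determined up to an additive constant that does not affect $\nabla p^{n+1}$. Hence all quantities appearing in the derivation are well defined.

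I do not expect any genuine technical obstacle here: the statement is a structural property of pressure-correction splittings and follows cleanly once one observes that the pressure Poisson equation is designed precisely so that its source equals $\nabla\cdot\vv^{*}/\dt$. The only point requiring a brief justification, and therefore the most delicate step of the write-up, is to ensure that the semi-discrete operators act on sufficiently smooth functions so that $\nabla\cdot(\nabla p) = \Delta p$ can be used freely; under the assumed periodic setting and the standard regularity of the continuous semi-discrete formulation this is immediate.
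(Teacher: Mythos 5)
Your proof is correct and follows essentially the same route as the paper: take the divergence of the velocity-correction step \eqref{eqn.vnewINS} and observe that the resulting right-hand side is exactly the pressure Poisson equation \eqref{eqn.pnew_INS}, hence vanishes. The paper merely writes out the intermediate splitting \eqref{eqn.divv1} more explicitly before taking the divergence, and your additional remark on the compatibility condition under periodic boundary conditions is a sensible (if not strictly required) supplement.
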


\begin{proof}
The semi-discrete scheme \eqref{eqn.INS_FOtime} supplemented with \eqref{eqn.vvisc} is equivalent to a splitting method in which we consider the following set of equations:
\begin{subequations}
	\label{eqn.divv1}
	\begin{align}
		\nabla \cdot \vv^{n+1} &= 0,  \label{eqn.divv1_cont}\\
		\frac{\vv^{*}-\vv^n}{\dt}  + \nabla \cdot (\vv^n \otimes \vv^n) - \nu \Delta \vv^{*} + \nabla p^{n} &= \boldsymbol{0}. \label{eqn.divv1_visc} \\
		\frac{\vv^{n+1}-\vv^{*}}{\dt} - \nabla p^{n} + \nabla p^{n+1} &= \boldsymbol{0}. \label{eqn.divv1_mom}
	\end{align}
\end{subequations}
We can solve the system by substitution, hence inserting \eqref{eqn.divv1_mom} into \eqref{eqn.divv1_visc} leads to
\begin{equation}
	\label{eqn.divv2}
	\vv^{n+1} = \vv^n - \dt \, \nabla \cdot (\vv^n \otimes \vv^n)  + \dt \, \nu \Delta \vv^{*} - \dt \, \nabla p^{n} + \dt \, \nabla p^{n} - \dt \, \nabla p^{n+1}.
\end{equation}	
Taking the divergence of the above equation, which is equivalent to substitute the above expression into the continuity equation \eqref{eqn.divv1_cont}, yields
\begin{eqnarray}
	\label{eqn.divv3}
	\nabla \cdot \vv^{n+1} &=& \nabla \cdot \left( \vv^n - \dt \, \nabla \cdot (\vv^n \otimes \vv^n)  + \dt \, \nu \Delta \vv^{*} - \dt \, \nabla p^{n} \right) + \dt \, \Delta p^{n} - \dt \, \Delta p^{n+1}, \nonumber \\
	&=& \nabla \cdot \vv^* + \dt \, \Delta p^{n} - \dt \, \Delta p^{n+1}.
\end{eqnarray}
The right hand side of \eqref{eqn.divv3} is indeed the elliptic equation \eqref{eqn.pnew_INS}, thus it vanishes, and the sought divergence-free constraint is preserved \eqref{eqn.th_divfree}.
\end{proof}

\paragraph{Shallow water equations} For the SWE model \eqref{eqn.SWE}, the first order semi-discrete scheme in time \eqref{eqn.FOtime} reads:
\begin{subequations}
	\label{eqn.SWE_FOtime}
	\begin{align}
		\frac{\eta^{n+1}-\eta^n}{\dt} + \nabla \cdot \q^{n+1} &= 0\,,  \label{eqn.SWE_FOtime_cont}\\
		\frac{\q^{n+1}-\q^n}{\dt}  + \nabla \cdot \left( \vv^n \otimes \q^n \right) + g H^n \nabla \eta^{n+1} &= \boldsymbol{0}\,. \label{eqn.SWE_FOtime_mom}
	\end{align}
\end{subequations}
Notice that the last equation for the bottom bathymetry is neglected since $b(\xx)$ is not time dependent. Following \cite{Casulli1990}, the above system is solved by substitution, inserting \eqref{eqn.SWE_FOtime_mom} into \eqref{eqn.SWE_FOtime_cont}, which leads to a wave equation for the unknown $\eta^{n+1}$:
\begin{equation}
	\label{eqn.Poisson}
	\eta^{n+1} + \dt^2 \, g \, \nabla \cdot \left( H^n \, \nabla \eta^{n+1}\right) = \eta^n - \dt \nabla \cdot F_{\q}^{n}, \qquad F_{\q}^{n} = \q^n - \dt \, \nabla \cdot \left( \vv^n \otimes \q^n \right),
\end{equation} 
where $F_{\q}^{n}$ denotes again the contribution of the non-linear explicit terms. As for the INS equations, the above linear system is solved with the GMRES method up to the prescribed tolerance $\delta_0$. Once $\eta^{n+1}$ is computed, it can be used to straightforwardly obtain $\q^{n+1}$ through \eqref{eqn.SWE_FOtime_mom}.

\begin{theorem}\label{th_swe_1}(Well-balance property). 
	Assuming periodic boundary conditions on $\partial \Omega \in \R$ and assuming the following initial condition
	\begin{equation}
		\label{eqn.QWB}
		\eta(\xx,0) = \eta_0, \qquad \vv(\xx,0)=\bzero, \qquad b(\xx) \neq 0,
	\end{equation}
	the semi-discrete scheme \eqref{eqn.SWE_FOtime} is well-balanced in the sense of \cite{WBLeVeque}.
\end{theorem}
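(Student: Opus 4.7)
The plan is to verify that the lake-at-rest state $\eta = \eta_0$ (constant), $\vv = \bzero$, $b(\xx) \neq 0$ is a fixed point of one step of the semi-discrete scheme \eqref{eqn.SWE_FOtime}, and then conclude by induction in $n$. Assume inductively that $\eta^n = \eta_0$ and $\q^n = H^n \vv^n = \bzero$ at the current time level, which matches the given initial data \eqref{eqn.QWB} for $n=0$. The quantity $H^n = \eta^n - b = \eta_0 - b(\xx)$ is strictly positive but generally non-constant.

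First, I would evaluate the explicit convective contribution: since $\vv^n = \bzero$ and $\q^n = \bzero$, the nonlinear tensor $\vv^n \otimes \q^n$ vanishes identically, and hence $F_{\q}^n = \q^n - \dt\,\nabla\cdot(\vv^n \otimes \q^n) = \bzero$, giving $\nabla\cdot F_{\q}^n = 0$. Substituting this into the wave equation \eqref{eqn.Poisson}, the implicit elliptic problem for $\eta^{n+1}$ reduces to
\begin{equation*}
\eta^{n+1} + \dt^2\, g\, \nabla \cdot \left( H^n \, \nabla \eta^{n+1} \right) = \eta_0.
\end{equation*}
The candidate $\eta^{n+1} = \eta_0$ (a constant) trivially satisfies this equation because $\nabla \eta_0 = \bzero$, so the divergence term drops out and the remaining identity $\eta_0 = \eta_0$ holds.

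Next, I would argue uniqueness. On a periodic domain, the bilinear form associated with the operator $u \mapsto u + \dt^2 g\, \nabla\cdot(H^n \nabla u)$ is coercive on $H^1_{\mathrm{per}}(\Omega)$ thanks to the positive mass term $u$ and the strict positivity of $H^n$: testing the homogeneous equation against $u$ and integrating by parts yields $\|u\|_{L^2}^2 + \dt^2 g \int_\Omega H^n |\nabla u|^2 \, d\xx = 0$, forcing $u \equiv 0$. Hence $\eta^{n+1} = \eta_0$ is the unique solution. Finally, plugging $\eta^{n+1} = \eta_0$ (so $\nabla \eta^{n+1} = \bzero$) and $F_{\q}^n = \bzero$ into the momentum update \eqref{eqn.SWE_FOtime_mom} gives $\q^{n+1} = \q^n - \dt\, \nabla\cdot(\vv^n\otimes\q^n) - \dt\, g H^n \nabla \eta^{n+1} = \bzero$, closing the induction.

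The only non-routine point is the uniqueness step, but as sketched it is immediate from the zeroth-order term in the Helmholtz-type equation, so no further machinery is needed. I expect the actual argument given in the paper to follow essentially this substitution-plus-uniqueness structure; the more delicate well-balance discussion (which would be the main obstacle) only arises at the fully discrete level, where the discrete gradient, divergence, and $L_2$ projection operators between the FV and VEM spaces must be shown to commute with constant states, but that is beyond the present semi-discrete statement.
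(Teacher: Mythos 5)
Your proposal is correct and follows essentially the same route as the paper: vanish the convective contribution for the rest state, observe that $\eta^{n+1}=\eta_0$ solves the reduced wave equation \eqref{eqn.Poisson2}, and then read off $\q^{n+1}=\bzero$ from \eqref{eqn.SWE_FOtime_mom}. Your coercivity-based uniqueness step is a small but genuine improvement, since the paper only asserts that $\eta^{n+1}=\eta_0$ is \emph{an} admissible solution (note only that the sign in your energy identity corresponds to the form \eqref{eqn.Poisson2}, not to the sign as printed in \eqref{eqn.Poisson}), and the paper additionally remarks that the Rusanov numerical dissipation vanishes on constant states, which you correctly defer to the fully discrete level.
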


\begin{proof}
The initial condition implies that $\eta^n=\eta_0$ and $\vv^n=0$, hence $\q^n=\bzero$. Consequently, in the absence of discontinuities in the numerical solution, we get a vanishing non-linear convective contribution, i.e. $\q^*=\bzero$. More precisely, the numerical dissipation associated to the numerical flux of the convective term $\nabla \cdot \left(\vv^n \otimes \q^n \right)$ is exactly zero for any constant state, included $\q^n=\bzero$. Thus, the wave equation \eqref{eqn.Poisson} reduces to
\begin{equation}
	\label{eqn.Poisson2}
	\eta^{n+1} - \dt^2 \, g \, \nabla \cdot \left( H^n \, \nabla \eta^{n+1}\right) = \eta^n, \qquad \q^* = \bzero,
\end{equation}
for which $\eta^{n+1}=\eta^n=\eta_0$ is an admissible solution. From this it follows that $\nabla \eta^{n+1}=0$. The discharge equation is then updated according to \eqref{eqn.SWE_FOtime_mom}, meaning that
\begin{equation}
	\q^{n+1} = \bzero - \dt g H^n \cdot 0 = \bzero.
\end{equation}
It is therefore evident that the semi-discrete scheme \eqref{eqn.SWE_FOtime} can preserve stationary solutions of the shallow water system of the form given by \eqref{eqn.QWB} with arbitrary bathymetry.
\end{proof}

\paragraph{High order semi-implicit IMEX schemes} The class of semi-implicit Implicit-Explicit (IMEX) Runge-Kutta methods is used to attain higher order accuracy in time \cite{BosFil2016}.  An IMEX Runge-Kutta scheme is a multi-step method characterized by two $s \times s$ triangular matrices, the explicit one, $\tilde A = (\tilde a_{kj})$, with $\tilde a_{kj} = 0$ for $ j\geq k$, and the implicit one, $A = (a_{kj})$, with $a_{kj} = 0$ for $j > k$, and by the weights vectors $\tilde b = (\tilde b_1, ...,\tilde b_s)^T$, $b = (b_1, ...,b_s)^T$, where $s$ identifies the number of implicit Runge-Kutta stages. They are usually defined by the following explicit (on the left) and implicit (on the right) Butcher tableau:
\begin{center}
\begin{tabular}{c | c}
 $\tilde{c}$ & $\tilde{A}$ \\ \hline
 & $\tilde{b}$
\end{tabular}
\hspace{1.0cm}
\begin{tabular}{c | c}
$c$ & $A$ \\ \hline
 & $b$
\end{tabular}
\end{center}
whose characterization provides the time accuracy of the scheme. According to \cite{BosFil2016}, the governing equations \eqref{eqn.PDE} are written under the form of an autonomous system, that is
\begin{equation}
	\label{eqn.PDE_part}
	\frac{\partial \Q}{\partial t} = \mathcal{H}\left(\Q_E(t), \Q_I(t) \right), \qquad \forall t > t_0, \qquad \textnormal{with} \qquad \Q(t_0)= \Q_0,
\end{equation}
where $\Q_0$ defines the initial condition at time $t_0$. The semi-implicit IMEX technique is a method-of-lines (MOL) strategy, hence the function $\mathcal{H}$ represents the spatial approximation of the conservative fluxes as well as the non-conservative products in \eqref{eqn.PDE}. The arguments of $\mathcal{H}$ undergo an explicit or implicit discretization in accordance to the flux splitting \eqref{eqn.PDE_split}. Consequently, a partitioned system with $\Q=(\Q_E,\Q_I)$ is obtained:
\begin{equation}
	\left\{\begin{aligned}
		\frac{\partial \Q_E}{\partial t} &=  \mathcal{H}\left(\Q_E, \Q_I \right) \\[0.7pt]
		\frac{\partial \Q_I}{\partial t} &=  \mathcal{H}\left(\Q_E, \Q_I \right) 
	\end{aligned} \right. ,
	\label{eqn.Hauto}
\end{equation}
where the number of unknowns has been doubled. However, for specific choices of the IMEX scheme and for autonomous systems this duplication is only apparent \cite{BosFil2016}. We can easily assume that the governing equations \eqref{eqn.INS} and \eqref{eqn.SWE} are autonomous, since no explicit time dependency is present in the function $\mathcal{H}$. Therefore, only one set of stage fluxes needs to be computed, which, at each stage $i = 1, \ldots, s$ can be evaluated as
\begin{equation}
	k_i = \mathcal{H} \left( \, \, \Q_E^n + \dt \sum \limits_{i=1}^s \tilde{a}_{ij} \, k_j, \quad \Q_I^n + \dt \sum \limits_{i=1}^s a_{ij} \, k_j \, \, \right), \qquad 1 \leq i \leq s.
	\label{eqn.ki}
\end{equation}
A semi-implicit IMEX Runge-Kutta method is then obtained as follows. Let us first set $\Q_E^n=\Q_I^n=\Q^n$, then the stage fluxes for $i = 1, \ldots, s$ are calculated as
\begin{subequations}
	\begin{align}
		\Q_E^i &= \Q_E^n + \dt \sum \limits_{j=1}^{i-1} \tilde{a}_{ij} k_j, & 2 \leq i \leq s, \label{eq.QE} \\[0.5pt]
		\tilde{\Q}_I^i &= \Q_E^n + \dt \sum \limits_{j=1}^{i-1} a_{ij} k_j, & 2 \leq i \leq s, \label{eq.QI}  \\[0.5pt]
		k_i &= \mathcal{H} \left( \Q_E^i, \tilde{\Q}_I^i + \dt \, a_{ii} \, k_i \right), & 1 \leq i \leq s. \label{eq.k} 
	\end{align}
\end{subequations}
Finally, the numerical solution is updated with
\begin{equation}
	\Q^{n+1} = \Q^n + \dt \sum \limits_{i=1}^s b_i k_i.
	\label{eqn.QRKfinal}
\end{equation} 

We remark that equation \eqref{eq.k} gives rise to a formally implicit step with the solution of a system for $k_i$, that corresponds to the elliptic equations \eqref{eqn.pnew_INS} and \eqref{eqn.Poisson}. The final update of the solution \eqref{eqn.QRKfinal} is done using the implicit weights $b^\top$ that are assumed to be equal to the explicit ones $\tilde{b}^\top$. Furthermore, the stage fluxes $k_i$ in \eqref{eqn.ki} are the same for both explicit and implicit conserved vectors $\Q_E$ and $\Q_I$,  therefore the system is actually not doubled, since there is indeed only one set of numerical solution. The Butcher tableaux for the IMEX schemes chosen to reach first, second and third order of accuracy in time are given in \ref{app.IMEX}.

\subsection{Asymptotic-Preserving property} \label{ssec.AP}
Numerical schemes satisfy the Asympotic-Preserving (AP) property if they provide a consistent discretization of the asymptotic limit model of the governing equations. To prove the fulfilment of this property, we first derive the asymptotic limit of the PDE systems \eqref{eqn.INS} and \eqref{eqn.SWE}, and then we study the AP property at the semi-discrete level for the schemes \eqref{eqn.INS_FOtime} and \eqref{eqn.SWE_FOtime}. The high order version of the schemes will also be compliant with the AP property, thanks to the semi-implicit IMEX strategy, as fully detailed in \cite{BosFil2016}. For this reason, in this framework we limit ourselves to consider only the first order semi-discrete schemes presented in the previous section.

Let us assume the computational domain $\Omega(\xx)$ to be assigned with periodic boundary conditions on $\partial \Omega$ and let us introduce the $k$-th order Chapman-Enskog expansion of a generic variable $\phi(\xx,t)$ in powers of the non-dimensional stiffness parameter $\varepsilon$, that reads
\begin{equation}
	\phi(\xx,t) = \phi_{(0)}(\xx,t) + \varepsilon \phi_{(1)}(\xx,t) + \varepsilon^2 \phi_{(2)}(\xx,t) + \ldots + \mathcal{O}(\varepsilon^k).
	\label{eqn.exp}
\end{equation}
As previously introduced in Section \ref{sec.pde}, the Reynolds number $\Rey$ is the stiffness parameter for the incompressible Navier-Stokes equations \eqref{eqn.INS_adim1}, while the Froude number $\Fr$ is used to characterize the scaling of the shallow water equations \eqref{eqn.SWE_adim}. 

\paragraph{Incompressible Navier-Stokes equations} Application of the expansion \eqref{eqn.exp} to the rescaled governing PDE \eqref{eqn.INS_adim1} and collection of the like powers of $\varepsilon$ yields the following $k$-th leading order equations for $k\in\{0,-1\}$.
\begin{itemize}
	\item $\mathcal{O}(\varepsilon^{0})$:
	\begin{subequations}
		\label{eqn.INS_eps0}		
		\begin{align}
			\nabla \cdot \vv_{(0)} &= 0, \\ 
			\partial_t \vv_{(0)} + \nabla \cdot \left( \vv_{(0)} \otimes \vv_{(0)} \right) + \nabla p_{(1)} - \nu \Delta \vv_{(1)}&= \bzero.
		\end{align}
	\end{subequations}
	\item $\mathcal{O}(\varepsilon^{-1})$:
	\begin{equation}
		\label{eqn.INS_eps1}		
		\nabla p_{(0)} - \nu \Delta \vv_{(0)}= \bzero.
	\end{equation}
\end{itemize}	
Notice that at the first order asymptotic expansion we retrieve the diffusive Stokes equation \cite{Schlichting}. 

\begin{theorem}\label{th_ins_2}(Asymptotic-Preserving property). Assuming periodic boundary conditions on $\partial \Omega \in \R$, the semi-discrete scheme \eqref{eqn.INS_FOtime}-\eqref{eqn.vvisc} is a consistent approximation of the Stokes system \eqref{eqn.INS_eps1} at the first order asymptotic expansion in the asymptotic limit ($\varepsilon \to 0$).	
\end{theorem}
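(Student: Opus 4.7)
The plan is to adapt the Chapman-Enskog derivation of the Stokes limit directly to the semi-discrete scheme. First I would rewrite the momentum update \eqref{eqn.INS_FOtime_mom} in the non-dimensional form mandated by the splitting \eqref{eqn.INS_split}, so that the implicit viscous and pressure terms each carry an explicit $1/\varepsilon$ prefactor:
\begin{equation*}
\frac{\vv^{n+1}-\vv^n}{\dt} + \nabla\cdot(\vv^n\otimes\vv^n) - \frac{\nu}{\varepsilon}\Delta\vv^{n+1} + \frac{1}{\varepsilon}\nabla p^{n+1} = \bzero,
\end{equation*}
while the discrete continuity constraint $\nabla\cdot\vv^{n+1}=0$ is unaffected by the non-dimensionalization. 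Multiplying the momentum identity by $\varepsilon$ brings every term to non-negative powers of $\varepsilon$, a form suited for matched asymptotics.

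Next I would inject the Chapman-Enskog ansatz \eqref{eqn.exp} into both $\vv^n$, $\vv^{n+1}$ and $p^n$, $p^{n+1}$, assuming the expansion is legitimate at every discrete time level. Collecting the lowest-order coefficients, the $\mathcal{O}(\varepsilon^0)$ balance of the rescaled momentum equation reduces to
\begin{equation*}
-\nu\Delta\vv_{(0)}^{n+1} + \nabla p_{(0)}^{n+1} = \bzero,
\end{equation*}
which is exactly the Stokes relation \eqref{eqn.INS_eps1} evaluated at $t^{n+1}$. The discrete continuity equation yields $\nabla\cdot\vv_{(k)}^{n+1}=0$ at every order, and in particular at the leading one, which is actually guaranteed by Theorem \ref{th_ins_1}. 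Since the discrete Laplacian, divergence and gradient involved above are precisely the same spatial operators appearing in the continuous Stokes system \eqref{eqn.INS_eps1}, no further truncation argument is needed to conclude spatial consistency.

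The step I expect to be most delicate is reconciling this clean conclusion with the actual three-substep algorithm \eqref{eqn.vvisc}--\eqref{eqn.pnew_INS}, where the auxiliary field $\vv^*$ is obtained from an equation whose viscous and stored-pressure contributions each scale like $1/\varepsilon$ and could conceivably produce spurious singular leading terms. To bypass this concern I would exploit the fact that the substitution leading to \eqref{eqn.pnew_INS} is algebraically equivalent to the un-split update \eqref{eqn.INS_FOtime_mom}, so it is legitimate to expand the un-split system directly. Should one instead prefer a substep-by-substep analysis, one first reads from \eqref{eqn.vnewINS} at $\mathcal{O}(\varepsilon^{-1})$ that $\nabla p_{(0)}^{n+1}=\nabla p_{(0)}^n$, so that the leading-order pressure is stationary in time, and then extracts the Stokes balance from \eqref{eqn.vvisc} at the next order with $\vv_{(0)}^{*}=\vv_{(0)}^{n}$. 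Either route delivers the desired consistency with the Stokes system in the asymptotic limit $\varepsilon\to 0$.
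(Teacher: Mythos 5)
Your proposal is correct and follows essentially the same route as the paper: rescale the semi-discrete scheme so the viscous and pressure terms carry $1/\varepsilon$ prefactors, insert the Chapman--Enskog ansatz, and read off the $\mathcal{O}(\varepsilon^{-1})$ balance to obtain the discrete Stokes relation; your substep-by-substep alternative ($\nabla p_{(0)}^{n+1}=\nabla p_{(0)}^{n}$ from the projection step, Stokes balance from the viscous step) is precisely the computation the paper performs on the split system \eqref{eqn.divv1-adim}. The only cosmetic difference is that the paper states the result as $\nabla p_{(0)}^{n+1}=\nu\Delta\vv_{(0)}^{*}$ with $\vv_{(0)}^{*}=\vv_{(0)}^{n+1}$ at leading order, whereas you write it directly at $t^{n+1}$ via the unsplit update.
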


\begin{proof}
We assume that the following expansions hold true for the discrete variables at any generic time $t^n$:
\begin{equation}
	\label{eqn.exp2}
	\vv^n(\xx) = \vv_{(0)}^n (\xx) + \varepsilon \vv_{(1)}^n(\xx), \qquad p^n(\xx) = p_{(0)}^n (\xx) + \varepsilon p_{(1)}^n(\xx).
\end{equation}
We recall that the semi-discrete scheme \eqref{eqn.INS_FOtime}-\eqref{eqn.vvisc} can be explicitly written as a splitting method for the viscous terms according to \eqref{eqn.divv1}, and its rescaled version with the dimensionless variables \eqref{eqn.Qadim} writes
\begin{subequations}
	\label{eqn.divv1-adim}
	\begin{align}
		\nabla \cdot \vv^{n+1} &= 0,  \label{eqn.divv1_adim_cont}\\
		\frac{\vv^{*}-\vv^n}{\dt}  + \nabla \cdot (\vv^n \otimes \vv^n) - \frac{\nu}{\varepsilon} \Delta \vv^{*} + \frac{1}{\varepsilon}\nabla p^{n} &= \boldsymbol{0}. \label{eqn.divv1_adim_visc} \\
		\frac{\vv^{n+1}-\vv^{*}}{\dt} - \frac{1}{\varepsilon} \nabla p^{n} + \frac{1}{\varepsilon} \nabla p^{n+1} &= \boldsymbol{0}. \label{eqn.divv1_adim_mom}
	\end{align}
\end{subequations}
Inserting \eqref{eqn.exp2} into the semi-discrete scheme \eqref{eqn.divv1-adim} and retaining only first order terms of the expansions leads to
\begin{subequations}
	\label{eqn.divv2-adim}
	\begin{align}
		- \nu \Delta \vv_{(0)}^{*} + \nabla p_{(0)}^{n} &= \boldsymbol{0}. \label{eqn.divv2_adim_visc} \\
		- \nabla p_{(0)}^{n} + \nabla p_{(0)}^{n+1} &= \boldsymbol{0}. \label{eqn.divv2_adim_mom}
	\end{align}
\end{subequations}
This obviously yields the discrete Stokes model
\begin{equation}
	\nabla p_{(0)}^{n+1} = \nu \Delta \vv_{(0)}^{*},
\end{equation}
which is a consistent discretization of the limit model \eqref{eqn.INS_eps1}. Indeed, at the first order asymptotic expansion the divergence-free constraint disappears, and we have that $\vv^*=\vv^{n+1}$. 
\end{proof}

\paragraph{Shallow water equations} Following the same reasoning, we can identify the limit model of the shallow water system by using the expansion \eqref{eqn.exp2} in the rescaled model \eqref{eqn.SWE_adim} and by collecting like powers of $\varepsilon$ for $k\in\{0,-1,-2\}$.
\begin{itemize}
	\item $\mathcal{O}(\varepsilon^{0})$:
	\begin{subequations}
		\label{eqn.SWE_eps0}		
		\begin{align}
			\partial_t \eta_{(0)} + \nabla \cdot \left( \left(\eta_0-b\right) \, \vv_0 \right)&= 0, \\ 
			\partial_t \q_{(0)} + \nabla \cdot \left( \vv_{(0)} \otimes \q_{(0)} \right) + \eta_{(2)} \nabla \eta_{(0)} + \eta_{(1)} \nabla \eta_{(1)} + \left(\eta_{(0)}-b \right) \nabla \eta_{(2)} &= \bzero.
		\end{align}
	\end{subequations}
	
	\item $\mathcal{O}(\varepsilon^{-1})$:
	\begin{equation}
		\label{eqn.SWE_eps1}		
		\eta_{(1)} \nabla \eta_{(0)} + \left(\eta_{(0)}-b \right) \nabla \eta_{(1)} = \bzero.
	\end{equation}
	
	\item $\mathcal{O}(\varepsilon^{-2})$:
	\begin{equation}
		\label{eqn.SWE_eps2}		
		\left(\eta_{(0)}-b \right) \nabla \eta_{(0)} = \bzero.
	\end{equation}
\end{itemize} 
From \eqref{eqn.SWE_eps2} it directly follows that $\eta_{(0)} = \eta_{(0)}(t)$, which can be used in \eqref{eqn.SWE_eps1} to obtain $\eta_{(1)} = \eta_{(1)}(t)$ under the assumption of $H(\xx,t)>0$. Since we have assumed periodic boundaries, integration of the mass equation over the computational domain and application of Gauss theorem leads to vanishing fluxes, hence determining that the free surface elevation $\eta_{(0)}$ is constant in space as well. This means that the total water depth also becomes constant in both space and time, that is $H(\xx,t)=const$. Then, the low Froude shallow water system writes
\begin{subequations}
	\label{eqn.LowFr}		
	\begin{align}
		\nabla \cdot \left( \left(\eta_0-b\right) \, \vv_0 \right) &=0, \label{eqn.LowFr_eta}\\ 
		\partial_t \q_{(0)} + \nabla \cdot \left( \vv_{(0)} \otimes \q_{(0)} \right) + \left(\eta_{(0)}-b \right) \nabla \eta_{(2)} &= \bzero. \label{eqn.LowFr_q}
	\end{align}
\end{subequations}
The reader is referred to \cite{SIFVDG} for further details on the derivation of the limit model for the SWE system.

\begin{theorem}\label{th_swe_2}(Asymptotic Preserving property). Assuming periodic boundary conditions on $\partial \Omega \in \R$, the semi-discrete scheme \eqref{eqn.SWE_FOtime} is a consistent approximation of the low Froude shallow water system \eqref{eqn.LowFr} at the leading order asymptotic expansion in the asymptotic limit ($\varepsilon \to 0$).	
\end{theorem}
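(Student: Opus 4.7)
The plan is to parallel the proof of Theorem \ref{th_ins_2}, working with the rescaled dimensionless form of the semi-discrete scheme \eqref{eqn.SWE_FOtime} in which, by the flux splitting \eqref{eqn.SWE_split}, the implicit non-conservative term reads $\frac{H^n}{\varepsilon^2}\nabla \eta^{n+1}$. Into both the discrete mass update \eqref{eqn.SWE_FOtime_cont} and the discrete momentum update \eqref{eqn.SWE_FOtime_mom} I will substitute the Chapman--Enskog ansatz $\eta^n = \eta_{(0)}^n + \varepsilon\eta_{(1)}^n + \varepsilon^2 \eta_{(2)}^n + \mathcal{O}(\varepsilon^3)$, together with the corresponding expansion for $\q^n$ and the induced expansions for $\vv^n$ and $H^n=\eta^n-b$. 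Collecting like powers of $\varepsilon$ then produces, at each order, a discrete counterpart of the formal PDE relations \eqref{eqn.SWE_eps2}, \eqref{eqn.SWE_eps1} and \eqref{eqn.SWE_eps0}, and the objective is to show that the leading-order system retrieved in this way coincides with \eqref{eqn.LowFr}.

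The two singular orders $\mathcal{O}(\varepsilon^{-2})$ and $\mathcal{O}(\varepsilon^{-1})$ of the momentum equation are expected to yield, in sequence, $(\eta_{(0)}^n-b)\nabla \eta_{(0)}^{n+1}=\bzero$ and, once this is exploited, $(\eta_{(0)}^n-b)\nabla \eta_{(1)}^{n+1}=\bzero$. Under the positivity assumption $H(\xx,t)>0$ on the water depth, both identities force $\eta_{(0)}^{n+1}$ and $\eta_{(1)}^{n+1}$ to be independent of space. Integrating the mass update \eqref{eqn.SWE_FOtime_cont} over $\Omega$ and using the periodic boundary conditions to eliminate the divergence flux of $\q_{(0)}^{n+1}$ will then yield $\eta_{(0)}^{n+1}=\eta_{(0)}^n$, so that the leading-order free surface becomes constant in time as well. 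Consequently $H_{(0)}=\eta_{(0)}-b$ is stationary and the $\mathcal{O}(\varepsilon^0)$ slice of the mass equation collapses to $\nabla\cdot\q_{(0)}^{n+1}=\nabla\cdot((\eta_{(0)}-b)\vv_{(0)}^{n+1})=0$, which is precisely \eqref{eqn.LowFr_eta}.

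What remains is the $\mathcal{O}(\varepsilon^0)$ slice of the momentum equation: expanding $H^n\nabla\eta^{n+1}$ as a Cauchy product in $\varepsilon$ and keeping the coefficient of $\varepsilon^2$ (which, after division by $\varepsilon^2$, feeds the $\mathcal{O}(1)$ balance), the cross contributions involving $\nabla \eta_{(0)}^{n+1}$ and $\nabla \eta_{(1)}^{n+1}$ vanish by the previous step, leaving
\[
\frac{\q_{(0)}^{n+1}-\q_{(0)}^n}{\dt} + \nabla\cdot(\vv_{(0)}^n \otimes \q_{(0)}^n) + (\eta_{(0)}^n-b)\nabla \eta_{(2)}^{n+1} = \bzero,
\]
which is a consistent first-order discretization of \eqref{eqn.LowFr_q}. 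The main obstacle is the careful bookkeeping of the Cauchy product and the justification that no $\mathcal{O}(\varepsilon^{-2})$ or $\mathcal{O}(\varepsilon^{-1})$ residual survives in the asymptotic limit; the only genuinely non-algebraic ingredient is the use of periodic boundaries to promote $\eta_{(0)}$ from spatially to space--time constant, which closes the AP argument.
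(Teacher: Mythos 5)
Your proposal is correct and follows the same basic route as the paper: insert a Chapman--Enskog expansion into the rescaled semi-discrete scheme, collect powers of $\varepsilon$, and identify the $\mathcal{O}(1)$ balance with the limit system \eqref{eqn.LowFr}; the leading-order system you obtain is exactly \eqref{eqn.FOtime_scaled2}. The one genuine difference is where the ``well-prepared'' structure comes from. The paper simply postulates the truncated ansatz $\eta^n = \eta^n_{(0)} + \varepsilon^2\eta^n_{(2)}$ (no $\varepsilon^1$ term) and imports the space--time constancy of $\eta_{(0)}$ from the preceding continuous derivation of \eqref{eqn.LowFr}, whereas you keep the full expansion and re-derive these facts at the discrete level: the $\mathcal{O}(\varepsilon^{-2})$ and $\mathcal{O}(\varepsilon^{-1})$ slices of the discrete momentum update force $\nabla\eta^{n+1}_{(0)}=\nabla\eta^{n+1}_{(1)}=\bzero$ under $H>0$, and integration of the discrete mass update over the periodic domain promotes $\eta_{(0)}$ to a constant in time. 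Your version is therefore somewhat more self-contained --- it shows the scheme itself enforces the well-prepared structure rather than assuming it --- at the price of the extra Cauchy-product bookkeeping; the only residual hypothesis you still need is well-preparedness of the data at $t^0$ (so that $\eta^{n}_{(0)}$ is spatially constant at the first step), which is the same implicit assumption the paper makes.
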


\begin{proof}
We assume that the following expansions hold true for the discrete variables at any generic time $t^n$:
\begin{equation}
	\label{eqn.exp3}
	\eta^n(\xx) = \eta_{(0)}^n (\xx) + \varepsilon^2 \eta_{(2)}^n(\xx), \qquad \vv^n(\xx) = \vv_{(0)}^n (\xx) + \varepsilon \vv_{(1)}^n(\xx).
\end{equation}
We remark that $\eta_{(0)}^n(\xx)=\eta_{(0)}$ is constant in space and time and that $\varepsilon^2 \eta_{(2)}^n(\xx)$ is a perturbation of the free surface level, thus the constant total water depth at zeroth order is given by $H_{(0)}^n(\xx)=\eta_{(0)}(\xx)-b(\xx)$. Inserting \eqref{eqn.exp3} into the semi-discrete scheme \eqref{eqn.SWE_FOtime} and retaining only zeroth order terms of the expansions yields
\begin{subequations}
	\label{eqn.FOtime_scaled2}
	\begin{align}
		\nabla \cdot \left( H_{(0)} \vv_{(0)} \right)^{n+1} &= 0, \label{eqn.FOtime_scaled2_eta} \\
		\frac{\left( H_{(0)} \vv_{(0)}\right)^{n+1}-\left( H_{(0)} \vv_{(0)} \right)^n}{\dt}  + \nabla \cdot \left( {\vv_{(0)}^n} \otimes (H_{(0)} \vv_{(0)})^n \right) + H_{(0)}^n \nabla \eta_{(2)}^{n+1} &= \bzero, \label{eqn.FOtime_scaled2_q}
	\end{align}
\end{subequations} 
that is a consistent discretization of the low Froude shallow water system \eqref{eqn.LowFr}. Furthermore, if we formally substitute the discharge equation \eqref{eqn.FOtime_scaled2_q} into the mass equation \eqref{eqn.FOtime_scaled2_eta}, we obtain the corresponding rescaled version of the elliptic equation \eqref{eqn.Poisson} in the asymptotic limit, namely
\begin{equation}
	\label{eqn.Poisson_scaled}
		\dt \, H_{(0)} \, \nabla \cdot \nabla \eta_{(2)}^{n+1} = \nabla \cdot \left( H_{(0)} \vv_{(0)} \right)^n - \dt \, \nabla \cdot \left( \nabla \cdot \left( {\vv_{(0)}^n} \otimes (H_{(0)} \vv_{(0)})^n \right) \right).
\end{equation}
It is interesting to notice that the above equation \eqref{eqn.Poisson_scaled}, divided by the constant water depth $H_{(0)}$, is the classical Poisson equation for the pressure correction of the projection stage for the incompressible Navier-Stokes model, which we solve in \eqref{eqn.pnew_INS}. The only difference is that in \eqref{eqn.pnew_INS} we use a splitting for the viscous terms, hence yielding the additional term related to the pressure gradient at the current time $t^n$. We can therefore conclude that the semi-discrete scheme \eqref{eqn.SWE_FOtime} is a consistent discretization of the limit model \eqref{eqn.LowFr}.  
\end{proof}

\subsection{Spatial finite volume discretization of the explicit terms}
\label{sec.FVM_ex}
The explicit terms are only concerned with the conservative fluxes $\mathbb{F}_E(\Q)$ in \eqref{eqn.PDE_split}, and they are discretized using a classical finite volume scheme tailored for general polygonal meshes. They are constructed upon integration of the governing PDE over the control volume $P_i$ and application of Gauss theorem, hence leading to
\begin{equation}
	\label{eqn.PDE_Gauss}
	\partial_t \int \limits_{P_i} \Q_i \, d\xx = - \int \limits_{\partial P_i} \mathbb{F}_E(\Q) \cdot \nn \, dS,
\end{equation}
which indeed gives the explicit fluxes $\mathcal{H}(\Q_E)$ in \eqref{eqn.PDE_part}. The contribution of the explicit fluxes is then collected in the term $F_{\Q,i}^n$, which can be either $F_{\vv}^n$ in \eqref{eqn.vvisc} or $F_{\q}^n$ in \eqref{eqn.Poisson}: 
\begin{equation}
	\label{eqn.fvscheme}
	F_{\Q,i}^n = \Q_i^n - \frac{\dt}{|P_i|} \sum \limits_{e=1}^{\NE} \int \limits_{\partial {P,e}} \mathcal{F}(\w_e^{-,n},\w_e^{+,n},\norPE) \, dS.
\end{equation}
The Rusanov--type numerical flux $\mathcal{F}$ is a function of three arguments, namely the left and the right state $\w_e^{-,n},\w_e^{+,n}$, with respect to the edge $e$, and the associated outward pointing unit normal vector $\norPE$. The numerical flux is thus defined by
\begin{equation}
	\label{eqn.rusanov}
	\mathcal{F}(\w_e^{-,n},\w_e^{+,n},\norPE) = \frac{1}{2} \left( \mathbb{F}_E(\w_e^{-,n}) + \mathbb{F}_E(\w_e^{+,n}) \right) \cdot \norPE - \frac{1}{2} |s_{\max}| \left( \w_e^{+,n} - \w_e^{-,n} \right).
\end{equation}
Notice that the numerical dissipation $s_{\max}$ is given by the maximum eigenvalue of the convective sub-systems, namely \eqref{eqn.INS_eig} or \eqref{eqn.SWE_eig}, related to the right and left states. Consequently, the numerical diffusion of the scheme is proportional to the flow velocity only, which is a remarkable property in the context of low Froude number flows for the SWE model, since the acoustic-gravity wave speed does not play any role and the numerical dissipation automatically tends to zero for $\varepsilon \to 0$. The left and right states can be simply given by the finite volume solution stored as cell averages \eqref{eqn.cellAv} within each cell, being
\begin{equation}
	\label{eqn.states}
	\w_e^{-,n} = \Q_i^n, \qquad \w_e^{+,n}= \Q_{j}^n,
\end{equation}
where $j$ is the index of the polygon which shares edge $e$ with the cell $P_i$. Of course, the above choice yields only a first order in space accurate scheme. To improve the spatial order of accuracy, we perform a reconstruction technique that can effectively be used to fed the left and right state in the computation of the numerical flux function \eqref{eqn.rusanov}. 

\paragraph{CWENO reconstruction on polygonal meshes} The aim of the reconstruction operator is to determine the expansion coefficient of the reconstruction polynomial \eqref{eqn.recPoly}, that will be subsequently employed for the computation of the high order numerical fluxes in the finite volume scheme \eqref{eqn.rusanov}. We choose to use the class of Central WENO (CWENO) schemes, which has been originally proposed in \cite{LPR:99,LPR:2001}, and subsequently extended to deal with general polygonal meshes in \cite{CWENOGBK,FVBoltz,SIFVDG}. The interested reader is referred to the aforementioned references for all the details about the CWENO procedure, in particular we point out Appendix B in \cite{SIFVDG}. Eventually, we obtain a reconstruction polynomial within each cell of arbitrary degree $k$, that is expressed according to \eqref{eqn.recPoly}. This polynomial is then used to evaluate the high order extrapolated states $\w_e^{-,n}$ and $\w_e^{+,n}$ in the finite volume scheme \eqref{eqn.fvscheme}, hence improving the spatial accuracy of the scheme up to order $k+1$.

\subsection{Spatial virtual element discretization of the implicit terms}
The implicit discretization is based on higher-order conforming virtual element approximations on the polygonal tessellation. Hereafter, we detail the virtual element formulation for both the INS and the SWE models. 

\paragraph{Incompressible Navier-Stokes model}
We consider the implicit contributions in the semi-discrete scheme \eqref{eqn.INS_FOtime} on domain $\Omega$ and derive the weak form for both equations \eqref{eqn.vvisc} and \eqref{eqn.pnew_INS}, where pure Dirichlet boundary conditions ${\bm{\vv}^{\ast} = [g_x(\xv), g_y(\xv)]^T}$ and  ${p^{n+1} = h(\xv)}$ are applied on $\partial \Omega$. We explicitate the components of $\bm{\vv}^{\ast}$ on $\Omega$ by writing ${\bm{\vv}^{\ast} = [v^{\ast}_x, v^{\ast}_y]^T}$. Let us denote by $H^1_0(\Omega)$ the affine subspace of functions in the Sobolev space $H^1(\Omega)$ whose trace is zero on $\partial \Omega$. Moreover, we denote by $H^1_{g_x}(\Omega)$, $H^1_{g_y}(\Omega)$ and $H^1_h(\Omega)$ respectively the affine subspaces of functions in the Sobolev space $H^1(\Omega)$ whose trace is equal to $g_x(\xv)$, $g_y(\xv)$ and $h(\xv)$ on $\partial \Omega$. Multiplying component-wise the  momentum equation \eqref{eqn.vvisc} and the continuity equation \eqref{eqn.pnew_INS} by a test function $\varw \in H^1_0(\Omega)$, integrating over the domain $\Omega$ and applying the divergence theorem we get the following weak formulation for the INS model:
\begin{subequations}
	\label{ins_weak_form1} 
	\begin{equation}
		\textrm{find} \enskip v^{\ast}_x \in H^1_{g_x}(\Omega), \, v^{\ast}_y \in H^1_{g_y}(\Omega), \, p^{n+1} \in H^1_h(\Omega) \enskip \textrm{s.t.} \nonumber \\
	\end{equation}
	\begin{empheq}[left=\empheqlbrace]{align}
		\int_{\Omega} v^{\ast}_x \varw d\Omega + \Delta t \int_{\Omega} \nabla v^{\ast}_x \cdot \nabla{\varw}  d\Omega 
		= \int_{\Omega} (F_{\vv,x}^n - \Delta t \frac{\partial p^n}{\partial x} ) \varw d\Omega \quad \forall \varw \in H^1_0(\Omega), \label{ins_weak_form1a}  \\
		\int_{\Omega} v^{\ast}_y \varw d\Omega + \Delta t \int_{\Omega} \nabla v^{\ast}_y \cdot \nabla{\varw}  d\Omega 
		= \int_{\Omega} (F_{\vv,y}^n - \Delta t \frac{\partial p^n}{\partial y} ) \varw d\Omega \quad \forall \varw \in H^1_0(\Omega), \label{ins_weak_form1b}  \\
		\Delta t \int_{\Omega} \nabla p^{n+1} \cdot \nabla{\varw}  d\Omega 
		= -\int_{\Omega} (\nabla \cdot \vv^{\ast} + \Delta t \Delta p^n) \varw d\Omega \quad \forall \varw \in H^1_0(\Omega). \label{ins_weak_form1c} 	
	\end{empheq}
\end{subequations}
Given the polygonal tessellation of the space domain $\mathcal{T}_{\Omega}$ \eqref{trian}, the local virtual element approximations ${v^{\ast}_{x,h}, v^{\ast}_{y,h} \in V^h_k(\P)}$ of the velocity field components $v^{\ast}_x, v^{\ast}_y$ and the local virtual element approximation $p_h^{n+1}  \in V^h_k(\P)$ of the pressure field $p^{n+1}$ can be respectively represented, on each control volume $\P \in \mathcal{T}_{\Omega}$, in terms of the canonical basis $\{\varphi_i\}_{i=1}^{\NDOF}$ of the local virtual element space ${V}^h_k(\P)$, introduced in Section \ref{Sec_Vh}. Hence, we write
\begin{subequations}\label{ins_vem1} 
\begin{align}
	v^{\ast}_{x,h}(\xv)\rvert_{\xv\in\P} &= \sum_{i=1}^{\NDOF} \dof_i(v^{\ast}_{x,h})\varphi_i(\xv), \label{ins_vem1a}\\
	v^{\ast}_{y,h}(\xv)\rvert_{\xv\in\P} &= \sum_{i=1}^{\NDOF} \dof_i(v^{\ast}_{y,h})\varphi_i(\xv), \label{ins_vem1b}\\
	p_h^{n+1}(\xv)\rvert_{\xv\in\P} &= \sum_{i=1}^{\NDOF} \dof_i(p_h^{n+1})\varphi_i(\xv). \label{ins_vem1c}
\end{align}
\end{subequations}
Choosing $\varw = \varphi_j$ in equations \eqref{ins_weak_form1} and plugging \eqref{ins_vem1} into \eqref{ins_weak_form1}, we obtain the discrete version of the weak form on the cell $\P$:
\begin{subequations}
	\label{ins_discr_weak_form1} 
	\begin{align}
		\mathbf{M}_{\P} \hat{\vv}^{\ast}_{x,\P} + \Delta t \mathbf{K}_{\P} \hat{\vv}^{\ast}_{x,\P} = \mathbf{F}^{n}_{m_x,\P}, \\
		\mathbf{M}_{\P} \hat{\vv}^{\ast}_{y,\P} + \Delta t \mathbf{K}_{\P} \hat{\vv}^{\ast}_{y,\P} = \mathbf{F}^{n}_{m_y,\P}, \\
		\Delta t \mathbf{K}_{\P} \hat{\mathbf{p}}_{\P}^{n+1} = \mathbf{F}^{n}_{c,\P},
	\end{align}
\end{subequations}
where vectors $\hat{\vv}^{\ast}_{x,\P}$ and $\hat{\vv}^{\ast}_{y,\P}$ are the degrees of freedom of the velocity field components $v_{x,h}^{\ast}$ and $v_{y,h}^{\ast}$ on $\P$ and vector $\hat{\mathbf{p}}_{\P}^{n+1}$ collects the degrees of freedom of the pressure field $p_h^{n+1}$ on $\P$. In \eqref{ins_discr_weak_form1}, we also introduce the definition of the local mass and stiffness matrices $\mathbf{M}_{\P}$ and $\mathbf{K}_{\P}$, respectively:
\begin{align} 
	(\mathbf{M}_{\P})_{i,j} &= \int_{\P} \varphi_i \varphi_j  \, d\xx,  \label{ins_mass_mat}\\\
	(\mathbf{K}_{\P})_{i,j} &= \int_{\P} \nabla \varphi_i \cdot \nabla \varphi_j \, d\xx. \label{ins_stiffness_mat_c} 
\end{align}
Furthermore, the local load terms $\mathbf{F}^{n}_{m_x,\P},\mathbf{F}^{n}_{m_y,\P}$ and $\mathbf{F}^{n}_{c,\P}$ on the right-hand sides, which are known from the $n-$th time step, are given by
\begin{align} \label{ins_load_terms} 
	(\mathbf{F}^{n}_{m_x,\P})_i &= \phantom{-} \int_{\P} (F_{\vv,x}^n - \Delta t \frac{\partial p^n}{\partial x} ) \varphi_i \, d\xx,  \\
	(\mathbf{F}^{n}_{m_y,\P})_i &= \phantom{-} \int_{\P} (F_{\vv,y}^n - \Delta t \frac{\partial p^n}{\partial y} ) \varphi_i \, d\xx,  \\
	(\mathbf{F}^{n}_{c,\P})_i &= -\int_{\P} (\nabla \cdot \vv^{\ast} + \Delta t \Delta p^n) \varphi_i \, d\xx. 
\end{align}

Matrices $\mathbf{M}_{\P}$ and $\mathbf{K}_{\P}$, as well as the load terms $\mathbf{F}^{n}_{m_x,\P},\mathbf{F}^{n}_{m_y,\P}$ and $\mathbf{F}^{n}_{c,\P}$, are not readily computable since they involve functions that are unknown within the element interior. To approximate $\mathbf{M}_{\P}$, we observe that for each virtual basis function $\varphi_i(\xv)$ on $\P$ we can write the expansion
\begin{align*}
	\varphi_i = \projL \varphi_i + (I-\projL) \varphi_i,
\end{align*}
where $I$ is the identity operator, and plugging it into \eqref{ins_mass_mat} we obtain
\begin{align}\label{ins_proj_M} 
	(\mathbf{M}_{\P})_{i,j} = \int_{\P} \varphi_i  \varphi_j  \, d\xx = \int_{\P} \projL\varphi_i  \projL \varphi_j  \, d\xx + \int_{\P} (I-\projL)\varphi_i  (I-\projL) \varphi_j \, d\xx  + \nonumber \\ 
	+ \int_{\P} \projL\varphi_i  (I-\projL) \varphi_j \, d\xx + \int_{\P} (I-\projL)\varphi_i  \projL\varphi_j \, d\xx.
\end{align}
We note that the last two terms in \eqref{ins_proj_M} vanish due to the orthogonality condition \eqref{eq:vem_projL1}, yielding
\begin{align}\label{ins_proj_M2} 
	(\mathbf{M}_{\P})_{i,j} = \int_{\P} \projL\varphi_i \projL \varphi_j  \, d\xx + \int_{\P} (I-\projL)\varphi_i  (I-\projL) \varphi_j \, d\xx.
\end{align}
The first term in \eqref{ins_proj_M2} guarantees consistency and can be computed exactly, whereas the second one ensures stability and although it cannot be exactly computed, it can be approximated as follows:
\begin{align}\label{ins_proj_M3} 
	\int_{\P} (I-\projL)\varphi_i (I-\projL) \varphi_j \, d\xx \simeq 
	\abs{\P}\sum_{r=1}^{\NDOF} \dof_r((I-\projL)\varphi_i) \dof_r((I-\projL) \varphi_j).
\end{align}
Here, we adopted the so-called $\textrm{dof}_i-\textrm{dof}_i$ stabilization \cite{vem2}, although other choices are possible \cite{vem5,Mascotto2018}. From simple algebraic manipulations, recalling \eqref{Pi_0}, the sought approximation $\mathbf{M}_{\P}^h$ of matrix $\mathbf{M}_{\P}$ can be evaluated as the sum of a consistency term and a stability term, reading
\begin{align}\label{ins_proj_M4} 
	\mathbf{M}_{\P}^h = \mathbf{C}^{T}\mathbf{H}^{-1}\mathbf{C} + \abs{\P}(\mathbf{I}-\mathbf{\Pi}^0)^T(\mathbf{I}-\mathbf{\Pi}^0).
\end{align}
Analogously, the integral in the definition of the stiffness matrix in \eqref{ins_stiffness_mat_c} needs to be approximated, since its integrand is unknown in the element interior. Plugging the expansion
\begin{align*}
	\varphi_i = \proj \varphi_i + (I-\proj) \varphi_i,
\end{align*}
into \eqref{ins_stiffness_mat_c}, we obtain
\begin{align}\label{ins_proj_K} 
	(\mathbf{K}_{\P})_{i,j} = \int_{\P} \nabla \varphi_i \cdot \nabla \varphi_j \, d\xx = \int_{\P} \nabla \proj \varphi_i \cdot \nabla \proj \varphi_j  \, d\xx + \int_{\P} \nabla (I-\proj) \varphi_i \cdot \nabla (I-\proj)\varphi_j  \, d\xx  + \nonumber \\ 
	+ \int_{\P} \nabla(I-\proj) \varphi_i \cdot \nabla \proj \varphi_i \, d\xx + \int_{\P} \nabla \proj \varphi_j \cdot \nabla (I-\proj)\varphi_j \, d\xx.
\end{align}
Again, the last two terms in \eqref{ins_proj_K} vanish due to orthogonality conditions \eqref{eq:vem_proj}, leading to
\begin{align}\label{ins_proj_K2} 
	(\mathbf{K}_{\P})_{i,j} = \int_{\P} \nabla \proj \varphi_i \cdot \nabla \proj \varphi_j \, d\xx + \int_{\P} \nabla (I-\proj) \varphi_i \cdot \nabla (I-\proj)\varphi_j  \, d\xx.
\end{align}
Here, the first term ensures polynomial consistency and the second term is responsible for stability. In particular, the consistency term is exactly computable since it entails the integration of known polynomials, whilst the stability term must be approximated. For such approximation we adopt an expression analogue to \eqref{ins_proj_M3}, thus
\begin{align}\label{ins_proj_K3} 
	\int_{\P} \nabla (I-\proj) \varphi_i \cdot \nabla (I-\proj)\varphi_j \, d\xx \simeq 
	\abs{\P}\sum_{r=1}^{\NDOF} \dof_r((I-\proj) \varphi_i)\dof_r((I-\proj)\varphi_j) .
\end{align}
After some algebraic manipulations, recalling \eqref{Pi_nabla}, the approximation $\mathbf{K}_{\P}^h$ of matrix $\mathbf{K}_{\P}$ can be evaluated as the sum of a consistency term and a stability term as follows:
\begin{align}\label{ins_proj_K4} 
	\mathbf{K}_{\P}^h = (\mathbf{\Pi}^{\nabla}_{\ast})^{T}\mathbf{G}\mathbf{\Pi}^{\nabla}_{\ast} + \abs{\P}(\mathbf{I}-\mathbf{\Pi}^{\nabla})^T(\mathbf{I}-\mathbf{\Pi}^{\nabla}).
\end{align}
Finally, to compute the approximations $\mathbf{F}^{n,h}_{m_x,\P},\mathbf{F}^{n,h}_{m_y,\P}$ and $\mathbf{F}^{n,h}_{c,\P}$ of the load terms $\mathbf{F}^{n}_{m_x,\P},\mathbf{F}^{n}_{m_y,\P}$ and $\mathbf{F}^{n}_{c,\P}$, respectively, we resort to the $L_2$ projector $\projLm$ introduced in Section \ref{L2_proj_subs} and we write
\begin{align}\label{ins_proj_K5} 
	(\mathbf{F}^{n,h}_{m_x,\P})_i &=  \phantom{-} \int_{\P} (F_{\vv,x}^n - \Delta t \frac{\partial p^n}{\partial x} ) \projLm \varphi_i \, d\xx,  \\
	(\mathbf{F}^{n,h}_{m_y,\P})_i &=  \phantom{-} \int_{\P} (F_{\vv,y}^n - \Delta t \frac{\partial p^n}{\partial y} ) \projLm \varphi_i \, d\xx,  \\
	(\mathbf{F}^{n,h}_{c,\P})_i &= -\int_{\P} (\nabla \cdot \vv^{\ast} + \Delta t \Delta p^n) \projLm \varphi_i \, d\xx.
\end{align}
The implicit terms are then computed by solving the following virtual element linear system:
\begin{subequations}
	\label{ins_vem_system} 
	\begin{align}
		\mathbf{M}^h_{\P} \hat{\vv^{\ast}}_{x,\P} + \Delta t \mathbf{K}^h_{\P} \hat{\vv^{\ast}}_{x,\P} = \mathbf{F}^{n,h}_{m_x,\P}, \\
		\mathbf{M}^h_{\P} \hat{\vv^{\ast}}_{y,\P} + \Delta t \mathbf{K}^h_{\P} \hat{\vv^{\ast}}_{y,\P} = \mathbf{F}^{n,h}_{m_y,\P}, \\
		\Delta t \mathbf{K}^h_{\P} \hat{\mathbf{p}}_{\P}^{n+1} = \mathbf{F}^{n,h}_{c,\P}.
	\end{align}
\end{subequations}

\paragraph{Shallow water equations}
For the SWE model, we consider the implicit contributions in the semi-discrete scheme \eqref{eqn.SWE_FOtime}. The weak form for equation \eqref{eqn.Poisson}, where pure Dirichlet boundary conditions $\eta^{n+1} = \bar{\eta}(\xv)$ are applied on $\partial \Omega$, is now derived. We denote by $H^1_{\bar{\eta}}(\Omega)$ the affine subspace of functions in $H^1(\Omega)$ whose trace is equal to $\bar{\eta}(\xv)$ on $\partial \Omega$. Multiplying \eqref{eqn.Poisson} by a test function $\varw \in H^1_0(\Omega)$, integrating over the domain $\Omega$ and applying the divergence theorem, the weak formulation reads:
\begin{equation}
	\textrm{find} \enskip \eta^{n+1} \in H^1_{\bar{\eta}}(\Omega) \enskip \textrm{s.t.} \nonumber \\
\end{equation}
\begin{align}\label{swe_weak_form} 
	\int_{\Omega} \eta^{n+1} \varw  d\Omega  
	-\Delta t^2 g \int_{\Omega} H^n \nabla \eta^{n+1} \cdot \nabla \varw  d\Omega  = 
	\int_{\Omega} (\eta^{n}-\Delta t \nabla \cdot F^n_{\mathbf{q}}) \varw  d\Omega \quad \forall \varw \in H^1_0(\Omega).
\end{align}
Given the polygonal tessellation of the space domain $\mathcal{T}_{\Omega}$, the virtual element approximations of the free surface elevation $\eta_h$ and the total water depth $H_h$ can be represented, on each control volume $\P \in \mathcal{T}_{\Omega}$, in terms of the canonical basis functions $\{\varphi_i\}_{i=1}^{\NDOF}$ of the virtual element space $\Vh_k(\P)$ introduced in Section \ref{Sec_Vh}:
\begin{align}\label{swe_vem1} 
	\eta_h(\xv)\rvert_{\xv\in\P} &= \sum_{i=1}^{\NDOF} \dof_i(\eta_h)\varphi_i(\xv), \\
	H_h(\xv)\rvert_{\xv\in\P} &= \sum_{i=1}^{\NDOF} \dof_i(H_h)\varphi_i(\xv).
\end{align}
Choosing $\varw = \varphi_j$, and plugging \eqref{swe_vem1} into \eqref{swe_weak_form} we obtain the discrete version of the weak form on the cell $\P$, that is
\begin{align}\label{discr_weak_form} 
	\mathbf{M}_{\P} \hat{\bm{\eta}}^{n+1}_{\P} - \Delta t \mathbf{K}^n_{\P} \hat{\bm{\eta}}^{n+1}_{\P} = \mathbf{F}^n_{s,\P},
\end{align}
where $\hat{\bm{\eta}}^{n+1}_{\P}$ is the vector of degrees of freedom of $\eta_h^{n+1}$ on $\P$, while matrices $\mathbf{M}_{\P}$ and $\mathbf{K}_{\P}$ are defined as
\begin{align}
	(\mathbf{M}_{\P})_{i,j} &= \int_{\P} \varphi_i \varphi_j  \, d\xx, \label{matrix_M} \\
	(\mathbf{K}^n_{\P})_{i,j} &= \int_{\P} H^n \nabla \varphi_i \cdot \nabla \varphi_j \, d\xx, \label{matrix_K}
\end{align}
and the load term $\mathbf{F}^n_{s,\P}$ reads
\begin{align}
	(\mathbf{F}^n_{s,\P})_i = \int_{\P} (\eta^{n}-\Delta t \nabla \cdot F^n_{\mathbf{q}}) \varphi_i  \, d\xx.
\end{align}

Matrices $\mathbf{M}_{\P}$ and $\mathbf{K}_{\P}$, as well as the load term $\mathbf{F}^n_{s,\P}$, are not readily computable since the respective integrands are unknown within the element interior. 
For the mass matrix $\mathbf{M}_{\P}$ we provide the approximation $\mathbf{M}^h_{\P}$ already defined in \eqref{ins_proj_K4} for the INS equations. 
On the other hand, matrix $\mathbf{K}^n_{\P}$ in \eqref{matrix_K} differs from matrix $\mathbf{K}_{\P}$ derived in \eqref{ins_stiffness_mat_c} for the INS model, because it contains the non-constant diffusion coefficient $H^n$, which is obviously space dependent. As shown in \cite{Beirao_Brezzi_Marini_2016}, in such a case, using the elliptic projector $\proj$ to approximate $\nabla \varv_h$, $\varv_h \in \Vh_k(\P)$, would entail the loss of optimal convergence rates. Instead, as suggested in \cite{Beirao_Brezzi_Marini_2016}, we employ the $L_2$ projector operator $\projLm$ on polynomials of degree $k-1$ to approximate $\nabla \varv_h$ within the consistency term of the corresponding approximated bilinear form.
The computation of $\projLm \nabla \varv_h$ in terms of the degrees of freedom is carried out by constructing the matrix representation of the operator $\projLm \nabla$. Such representation can be expressed in terms of the two matrices $\accentset{\star}{\mathbf{\Pi}}^{0,x}_{k-1}$ and $\accentset{\star}{\mathbf{\Pi}}^{0,y}_{k-1}$ given by
\begin{align*}
	\accentset{\star}{\mathbf{\Pi}}^{0,x}_{k-1} &= \hat{\mathbf{H}}^{-1}\mathbf{E}^x, \\
	\accentset{\star}{\mathbf{\Pi}}^{0,y}_{k-1} &= \hat{\mathbf{H}}^{-1}\mathbf{E}^y, 
\end{align*}
where matrix $\hat{\mathbf{H}}$ is obtained by taking the first $n_{k-1}$ rows and columns of matrix $\mathbf{H}$ defined in \eqref{h_matrix}. Matrices $\mathbf{E}^x$ and $\mathbf{E}^y$ are defined as follows:
\begin{align*}
	(\mathbf{E}^x)_{i\alpha} = \int_{\P} \varphi_{i,x}m_\alpha \, d\xx, \quad\quad	(\mathbf{E}^y)_{i\alpha} = \int_{\P} \varphi_{i,y}m_\alpha \, d\xx, \quad \quad \alpha = 1,...,n_{k-1}.
\end{align*}
Hence, matrix $\mathbf{K}^n_{\P}$ can be approximated with the matrix $\mathbf{K}^{n,h}_{\P}$ which is computed as
\begin{align}
	\label{projL_K} 
	(\mathbf{K}^{n,h}_{\P})_{i,j} &= (\mathbf{K}^{n,h}_{c,\P})_{i,j} + (\mathbf{K}^{n,h}_{s,\P})_{i,j} = \nonumber \\
	&= \int_{\P} H^n  \projLm \nabla\varphi_i \cdot \projLm \nabla\varphi_j  \, d\xx \,+\, \bar{H}^n_h\mathcal{S}_{\P}\left((I-\proj)\varphi_i,  (I-\proj)\varphi_j\right),
\end{align}
where the first integral represents the consistency term $\mathbf{K}^{n,h}_{c,\P}$ and the second term is the stability term $\mathbf{K}^{n,h}_{s,\P}$. Moreover, $\bar{H}^n_h$ is the mean value of $H^n_h$ over $\P$ and $\mathcal{S}_{\P}$ is a suitable stabilizing bilinear form. After a few algebraic passages, it can be shown that in \eqref{projL_K} the consistency term can be computed from the degrees of freedom in the following manner:
\begin{align}
	\label{K_consist} 
	\mathbf{K}^{n,h}_{c,\P} = (\accentset{\star}{\mathbf{\Pi}}^{0,x}_{k-1})^T \mathbf{H}^H \accentset{\star}{\mathbf{\Pi}}^{0,x}_{k-1} + (\accentset{\star}{\mathbf{\Pi}}^{0,y}_{k-1})^T \mathbf{H}^H \accentset{\star}{\mathbf{\Pi}}^{0,y}_{k-1},
\end{align}
where matrix $\mathbf{H}^H$ is defined as:
\begin{align}\label{H-h} 
	(\mathbf{H}^H)_{\alpha,\beta} := \int_{\P} H^n m_\alpha m_\beta d\xx, \quad 1\leq \alpha,\beta \leq n_{k-1}.
\end{align}
In the stability term $\mathbf{K}^{n,h}_{s,\P}$, we choose to adopt the $\dof_i-\dof_i$ stabilization for the bilinear form $\mathcal{S}_{\P}$. Therefore, $\mathbf{K}^{n,h}_{s,\P}$ matrix is eventually determined as
\begin{align}
	\label{K_stab} 
	\mathbf{K}^{n,h}_{s,\P} = \bar{H} (\mathbf{I} - \mathbf{\Pi}^{\nabla}_{k})^T(\mathbf{I} - \mathbf{\Pi}^{\nabla}_{k}).
\end{align}
The implicit terms are then computed by solving the following virtual element linear system:
\begin{align}\label{swe_vem_system} 
	\mathbf{M}^h_{\P} \hat{\bm{\eta}}^{n+1}_{\P} - \Delta t \mathbf{K}^{n,h}_{\P} \hat{\bm{\eta}}^{n+1}_{\P} = \mathbf{F}^{n,h}_{s,\P}.
\end{align}

\subsection{Projection operators between finite volume and virtual element space} \label{ssec.operators}
Let us define the $L_2-$projection operators used to transfer data from one approximation space to the other and viceversa. In particular, at the beginning of the implicit step of the numerical scheme, data are available in the finite volume space as cell averages \eqref{eqn.cellAv}, thus they need to be projected onto the virtual element space to solve the elliptic equation, either \eqref{eqn.pnew_INS} or \eqref{eqn.Poisson}. To that aim, we first perform a CWENO reconstruction step \cite{SIFVDG,FVBoltz} to obtain a high order representation of the numerical solution given by \eqref{eqn.recPoly}, that is in terms of the Taylor modal basis functions \eqref{eqn.Voronoi_modal}. This is then mapped onto the VEM space by the $\NDOF \times n_k$ operator $\Vop_{\P}$, which is defined for each element as
\begin{equation}
	\Vop_{\P} = \left( \mathbf{M}_{\P}^h \right)^{-1} \, \int \limits_{\P}  \, \Pi^0_{\ast}\varphi_{i} \, m_{\beta} \, d\xx, \qquad i=1, \ldots,\NDOF, \quad \alpha=1,\ldots,n_k,
\end{equation}
where the VEM mass matrix is computed according to \eqref{ins_proj_M4}. Analogously, the inverse $n_k \times \NDOF$ operator $\Cop_{\P}$, which transfers data from the virtual element to the finite volume space at the end of the implicit step, writes
\begin{equation}
	\Cop_{\P} = \left( \int \limits_{P} m_{\alpha} m_{\beta} \, d\xx \right)^{-1} \, \int \limits_{\P}  m_{\alpha} \, \Pi^0_{\ast}\varphi_{i} \, d\xx, \qquad i=1, \ldots,\NDOF, \quad \alpha=1,\ldots,n_k.
\end{equation}
It is clear that these operators verify the consistency relations
\begin{equation}
	\Cop_{\P} \, \Vop_{\P} = \mathbf{I}_{[n_k \times n_k]}, \qquad \Vop_{\P} \, \Cop_{\P} = \mathbf{I}_{[\NDOF \times \NDOF]}.
\end{equation}

\section{Numerical results} \label{sec.numtest}
The novel numerical methods are applied to several test cases in order to thoroughly assess their convergence, stability and accuracy properties. Whenever possible, a comparison against the reference solution is proposed. The quantitative analysis is carried out by measuring the errors between the numerical and the exact solution, denoted by $q_h$ and $q_{ex}$, respectively, for a generic variable $q(\xx): \Omega \rightarrow \R^2$. The $L_2$ and $L_\infty$ norms are defined as
\[
L_2(q) = \sqrt{\int_\Omega (q_h-q_{ex})^2 \, \diff x } \qquad \text{and} \qquad L_\infty(q) = \max_\Omega | q_h-q_{ex} |.
\]
The novel hybrid semi-implicit finite volume/virtual element methods are labelled with SI-FVVEM. The CFL number is assumed to be $\text{CFL}=0.9$ in \eqref{eqn.timestep}. We remark that the time step is independent from the fast scales of the problem under consideration, which are discretized implicitly. We also highlight that if the velocity field is initialized with zero, the first time step is determined according to the CFL condition of fully explicit schemes, hence including the eigenvalues of the full system of governing equations. The first six applications refer to the SWE, while the remaining seven test problems deal with the INS equations. Different Froude numbers and viscosity coefficients are considered for shallow water and viscous flows, respectively, to numerically show that they do not affect neither the accuracy nor the efficiency of the proposed SI-FVVEM schemes. If not stated otherwise, the default configuration is the third order SI-FVVEM scheme in space and time.

\subsection{Convergence rates study (SWE)}

This subsection examines the numerical convergence of the SI-FVVEM schemes on the steady shallow water vortex initially proposed in \cite{Voronoi}. The computational domain is the square $\Omega = [-5; 5]^2$ with a flat bottom ($b = 0$), considering periodic boundary conditions. For this test case, the exact solution in polar coordinates is defined by the following free surface elevation and velocity field
\begin{equation}
	\label{eqn.SWEVortex-ini}
	\eta(\xx) = H_0 - \frac{1}{2 g} e^{-(r^2-1)}, \qquad \vv(\xx)= \begin{bmatrix}
		u(\xx)\\
		v(\xx)
	\end{bmatrix} = \begin{bmatrix}
		-u_{\alpha} \sin(\alpha) \\  
		\phantom{-}u_{\alpha} \cos(\alpha) 
	\end{bmatrix},
\end{equation} 
where the radius $r$ and the polar angle $\alpha$ are given by the inverse polar map
\[
r = \sqrt{x^2 + y^2} \quad \text{and} \quad \alpha = \atanpol \left( y, x \right).
\]
In \eqref{eqn.SWEVortex-ini}, the water depth for infinite radius is identified by $H_0$ and the angular velocity $u_\alpha=r e^{ -(r^2 - 1)/2 }$ allows the centrifugal forces ($u_\alpha / g r$) to balance the pressure forces ($\partial \eta / \partial r$). In this simulation, the initial conditions are given by the exact solution in \eqref{eqn.SWEVortex-ini}. The Froude number can be varied by adjusting the value of the constant $H_0$. In particular, a gravitational constant $g = 10$ is used in order to ease this computation. For polynomial space dimension $M=2,3$ the convergence is performed on four refined computational meshes, each with four different Froude numbers $\Fr \in \{0.32, 10^{-2}, 10^{-4}, 10^{-6}\}$ (corresponding to an asymptotic free surface $H_0 \in \{10^0, 10^{3}, 10^{7}, 10^{11}\}$, respectively). Each mesh is identified by the characteristic length $h(\Omega)$, taken as the maximum element size in the computational grid. The analysis takes into account the $L_2$-norm for the free surface $\eta$ and the velocity $u$ in $x-$direction at final time $t_f = 0.1$, with quadruple precision finite arithmetic \cite{Busto_SWE2022}. Table \ref{tab.conv_rate} sums up the results and reports the convergence rates. We can see that the formal order of accuracy is correctly achieved, and that the scheme is both asymptotic preserving and accurate, with the order of accuracy being independent of the Froude number.

\begin{table}[!htp]
	\begin{center}
		\caption{Numerical convergence results of the SI-FVVEM scheme with second and third order of accuracy in space and time using the steady shallow water vortex problem on Voronoi meshes. The errors are measured in $L_2$ norm and refer to the free surface elevation $\eta$ and velocity component $u$ at time $t_f=0.1$. The asymptotic preserving (AP) property of the schemes is studied by considering different Froude numbers $\Fr=\{ 10^{-1}, 10^{-2}, 10^{-4}, 10^{-6} \}$.}
		\small{
			\renewcommand{\arraystretch}{1.1}	
			\begin{tabular}{cccccccccc}
				& \multicolumn{4}{c}{SI-FVVEM $\mathcal{O}(2)$} & & \multicolumn{4}{c}{SI-FVVEM $\mathcal{O}(3)$} \\
				\cline{2-5} \cline{7-10}
				$h(\Omega)$ & ${L_2}(\eta)$ & $\mathcal{O}(\eta)$ & ${L_2}(u)$ & $\mathcal{O}(u)$ & & ${L_2}(\eta)$ & $\mathcal{O}(\eta)$ & ${L_2}(u)$ & $\mathcal{O}(u)$ \\
				\hline
				& \multicolumn{9}{c}{$\Fr=10^{-1}$ (double precision)} \\
				\hline
				5.0184E-01 & 6.5520E-04 & -    & 4.3967E-02 & -    & & 5.5807E-04 & -    & 4.1527E-02 & -     \\
				2.4761E-01 & 1.2273E-04 & 2.37 & 9.5032E-03 & 2.17 & & 8.0381E-05 & 2.74 & 5.9083E-03 & 2.76  \\
				1.6167E-01 & 5.3189E-05 & 1.96 & 4.3203E-03 & 1.85 & & 2.7156E-05 & 2.55 & 1.9278E-03 & 2.63  \\
				1.2464E-01 & 2.9092E-05 & 2.32 & 2.4833E-03 & 2.13 & & 1.0751E-05 & 3.56 & 8.4473E-04 & 3.17  \\
				\hline
				& \multicolumn{9}{c}{$\Fr=10^{-2}$ (quadruple precision)} \\
				\hline
				5.0184E-01 & 6.7318E-06 & -    & 4.4214E-02 & -    & & 5.7630E-06 & -    & 4.1836E-02 & -     \\
				2.4761E-01 & 1.6369E-06 & 2.00 & 9.6380E-03 & 2.16 & & 8.0877E-07 & 2.78 & 5.9892E-03 & 2.77  \\
				1.6167E-01 & 7.4398E-07 & 1.85 & 4.5656E-03 & 1.75 & & 2.4626E-07 & 2.79 & 1.8936E-03 & 2.70  \\
				1.2464E-01 & 4.0295E-07 & 2.36 & 2.5566E-03 & 2.23 & & 9.7144E-08 & 3.58 & 8.5621E-04 & 3.05  \\
				\hline
				& \multicolumn{9}{c}{$\Fr=10^{-4}$ (quadruple precision)} \\
				\hline
				5.0184E-01 & 6.7348E-10 & -    & 4.4216E-02 & -    & & 5.7672E-10 & -    & 4.1839E-02 & -     \\
				2.4761E-01 & 1.7694E-10 & 1.89 & 9.6401E-03 & 2.16 & & 8.1477E-11 & 2.77 & 5.9903E-03 & 2.75  \\
				1.6167E-01 & 7.7779E-11 & 1.93 & 4.5693E-03 & 1.75 & & 2.4780E-11 & 2.79 & 1.8942E-03 & 2.70  \\
				1.2464E-01 & 4.0266E-11 & 2.53 & 2.5578E-03 & 2.23 & & 9.7662E-12 & 3.58 & 8.5635E-04 & 3.05  \\
				\hline
				& \multicolumn{9}{c}{$\Fr=10^{-6}$ (quadruple precision)} \\
				\hline
				5.0184E-01 & 6.7386E-14 & -    & 4.4281E-02 & -    & & 5.7694E-14 & -    & 4.1908E-02 & -     \\
				2.4761E-01 & 1.7780E-14 & 1.89 & 9.7891E-03 & 2.14 & & 8.5738E-15 & 2.78 & 7.0563E-03 & 2.52  \\
				1.6167E-01 & 7.7780E-15 & 1.94 & 4.5707E-03 & 1.79 & & 2.5189E-15 & 2.79 & 2.3166E-03 & 2.61  \\
				1.2464E-01 & 4.0491E-15 & 2.51 & 2.5657E-03 & 2.22 & & 9.9750E-16 & 3.58 & 1.0207E-04 & 3.15  \\
			\end{tabular}
		}
		\label{tab.conv_rate}
	\end{center}
\end{table}
%
%
\subsection{Well-balance test (SWE)}

To test the well-balance property of the scheme (also known as the C-property) proven in Theorem \ref{th_swe_1}, we use a benchmark proposed in \cite{WBLeVeque}. This test case assesses whether a numerical scheme can maintain stationary equilibrium solutions of the governing equations up to machine precision. The particular water-at-rest equilibrium solution of the SWE is characterized by a constant free surface elevation $\eta(\xx,t)=0$ and zero fluid velocity, i.e., $\vv(\xx,t)=\mathbf{0}$, with an arbitrary bottom topography different from the trivial profile $b(\xx)= 0$. Following \cite{WBLeVeque}, we use a computational domain $\Omega=[-2;1]\times[-0.5;0.5]$ with Dirichlet boundary conditions in the $x-$direction and periodic boundaries along the $y-$direction. The domain is paved with $N_P=8633$ Voronoi cells of mesh size $h=1/50$. The bathymetry and the initial free surface elevation are defined as follows:
\begin{equation}
	\label{eqn.WBini}
	b(\xx) = 0.5 \cdot e^{-5 \, (x+0.1)^2 - 50 y^2}, \qquad \eta(\xx,0) = \left\{ \begin{array}{lc}
		1 + \delta & \textnormal{ if } -0.95 \leq x \leq -0.85 \\
		1 & \textnormal{elsewhere} 
	\end{array}\right. .
\end{equation} 
Here, the fluid is initially at rest, and we first set the perturbation amplitude $\delta=0$. The simulation is run until the final time $t_f=0.1$, using double and quadruple finite arithmetic. The errors with respect to the initial condition are reported in Table \ref{tab.Cproperty}. The results show that the scheme is well-balanced up to machine accuracy.
\begin{table}[!htp]
	\begin{center}
		\caption{Well-balance test with double and quadruple finite arithmetic precision. Errors measured in $L_2$ and $L_{\infty}$ norms for the free surface elevation $\eta$ and momentum component $Hu$ at the final time $t_f=0.1$.}
		\begin{tabular}{lcccc}
			Precision & $L_2(\eta)$ & $L_{\infty}(\eta)$ & $L_2(Hu)$ & $L_{\infty}(Hu)$ \\
			\hline
			Double    & 1.5221E-15 & 3.5527E-15  & 2.010E-18 & 9.7600E-17  \\
			Quadruple & 9.3593E-34 & 4.5259E-33  & 4.833E-22 & 1.3078E-20
		\end{tabular}
		\label{tab.Cproperty}
	\end{center}
\end{table}

In accordance with \cite{WBLeVeque}, a slight perturbation is introduced in the free surface elevation by setting $\delta=10^{-2}$ in \eqref{eqn.WBini}. To properly track the wave propagation, a constant time step of $\dt=0.01$ is employed. Figure \ref{fig.Ctest2} displays the results at various output times, demonstrating that the presence of the bottom bump does not generate any spurious oscillations. Moreover, the flow structure agrees well with the outcomes found in the literature \cite{TavelliSWE2014,Canestrelli2010,Busto_SWE2022}.
\begin{figure}[!htbp]
	\begin{center}
		\begin{tabular}{cc}
			\includegraphics[width=0.49\textwidth]{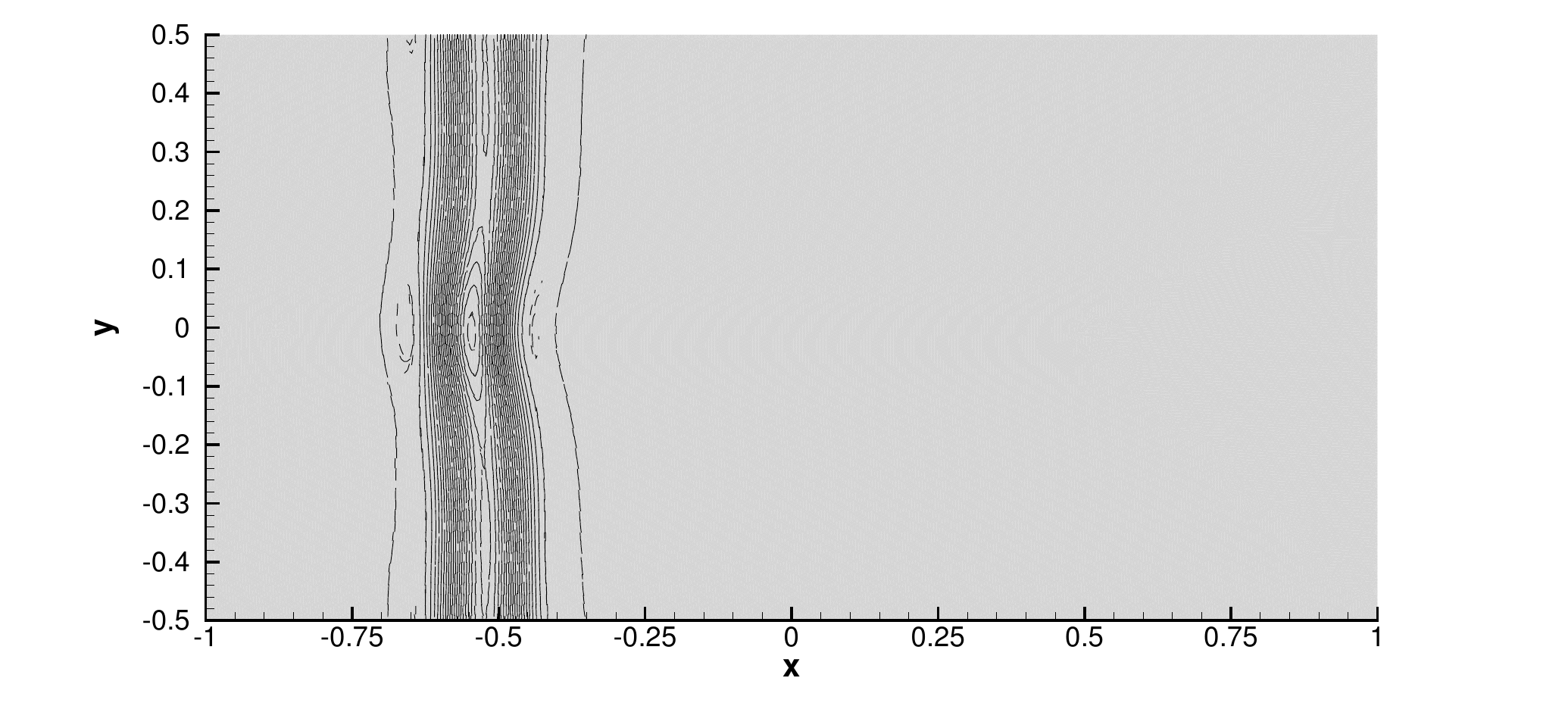}  &          
			\includegraphics[width=0.49\textwidth]{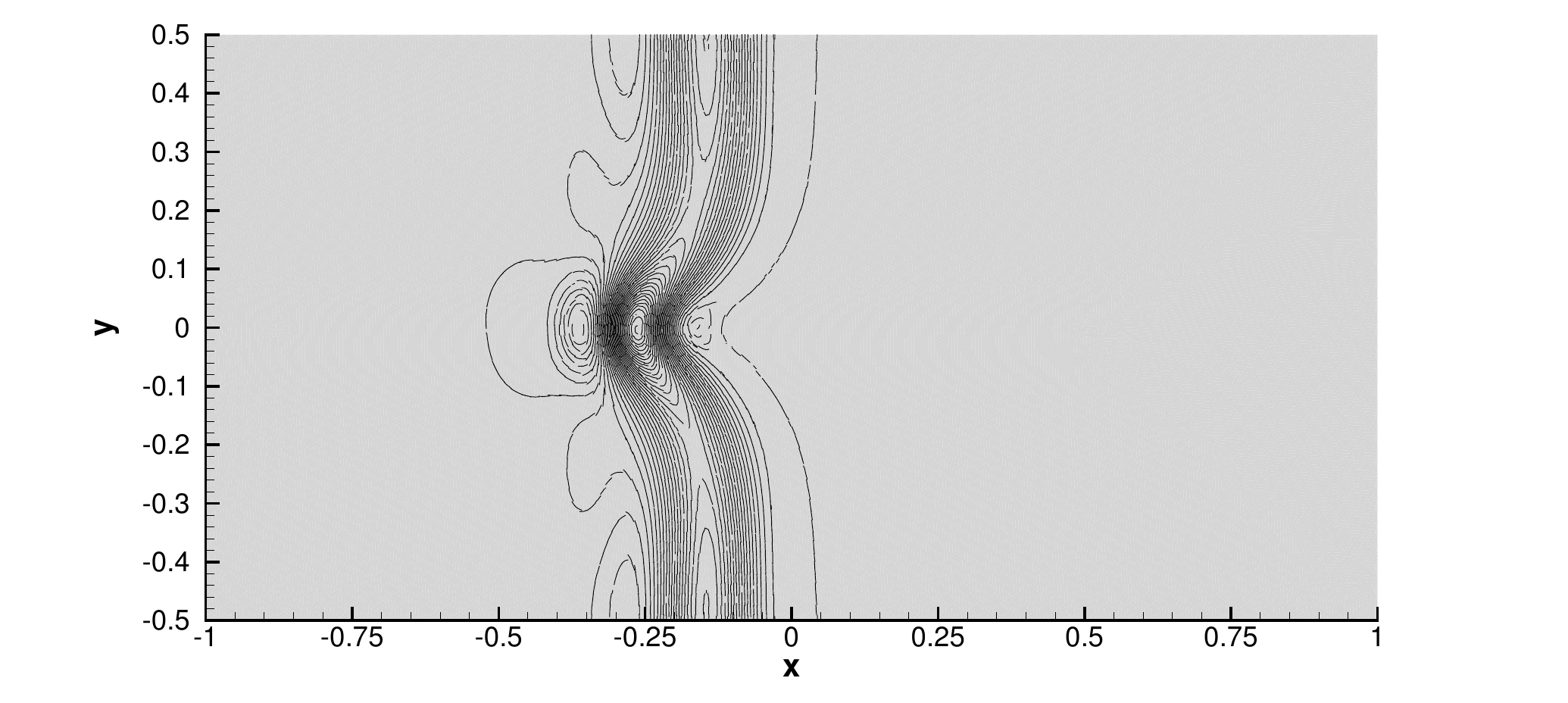}    \\
			\includegraphics[width=0.49\textwidth]{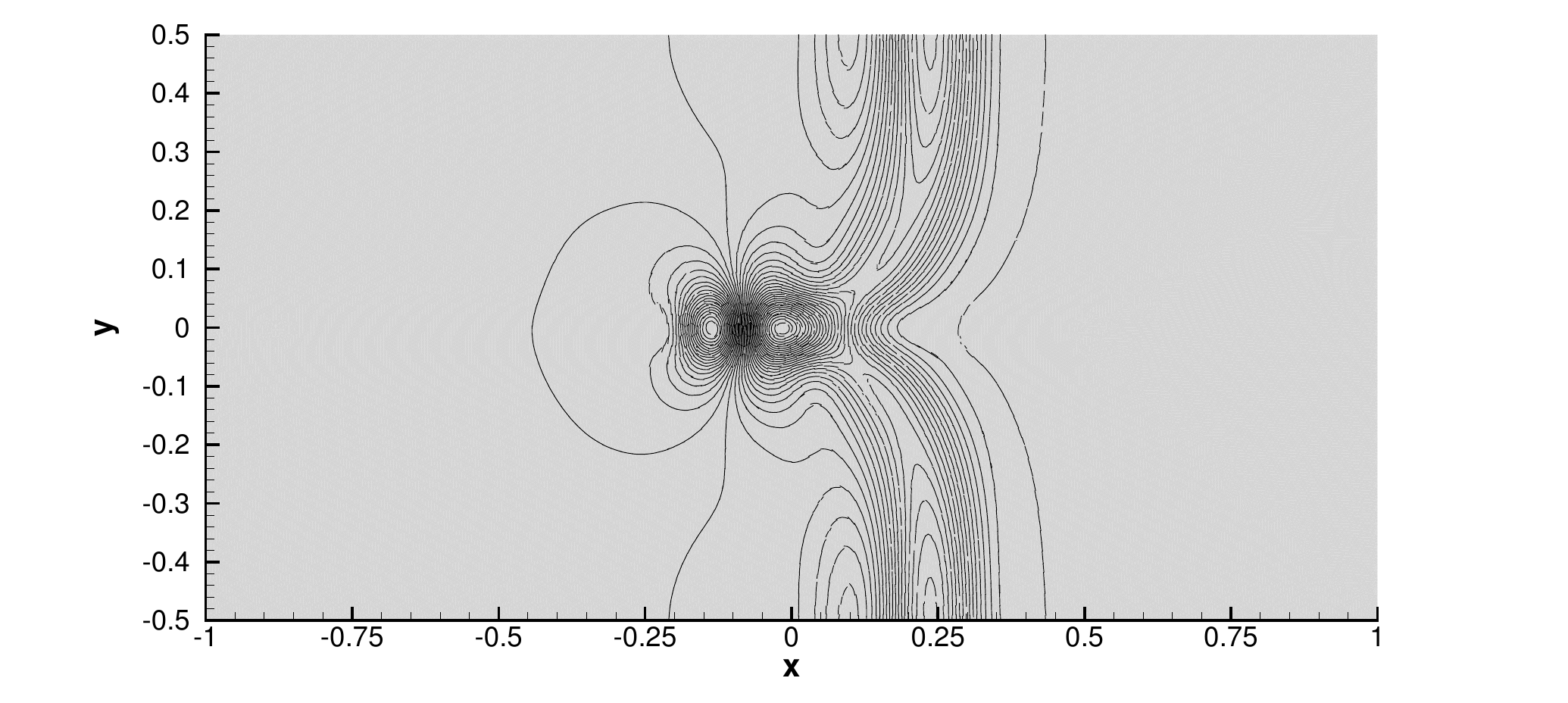}  &          
			\includegraphics[width=0.49\textwidth]{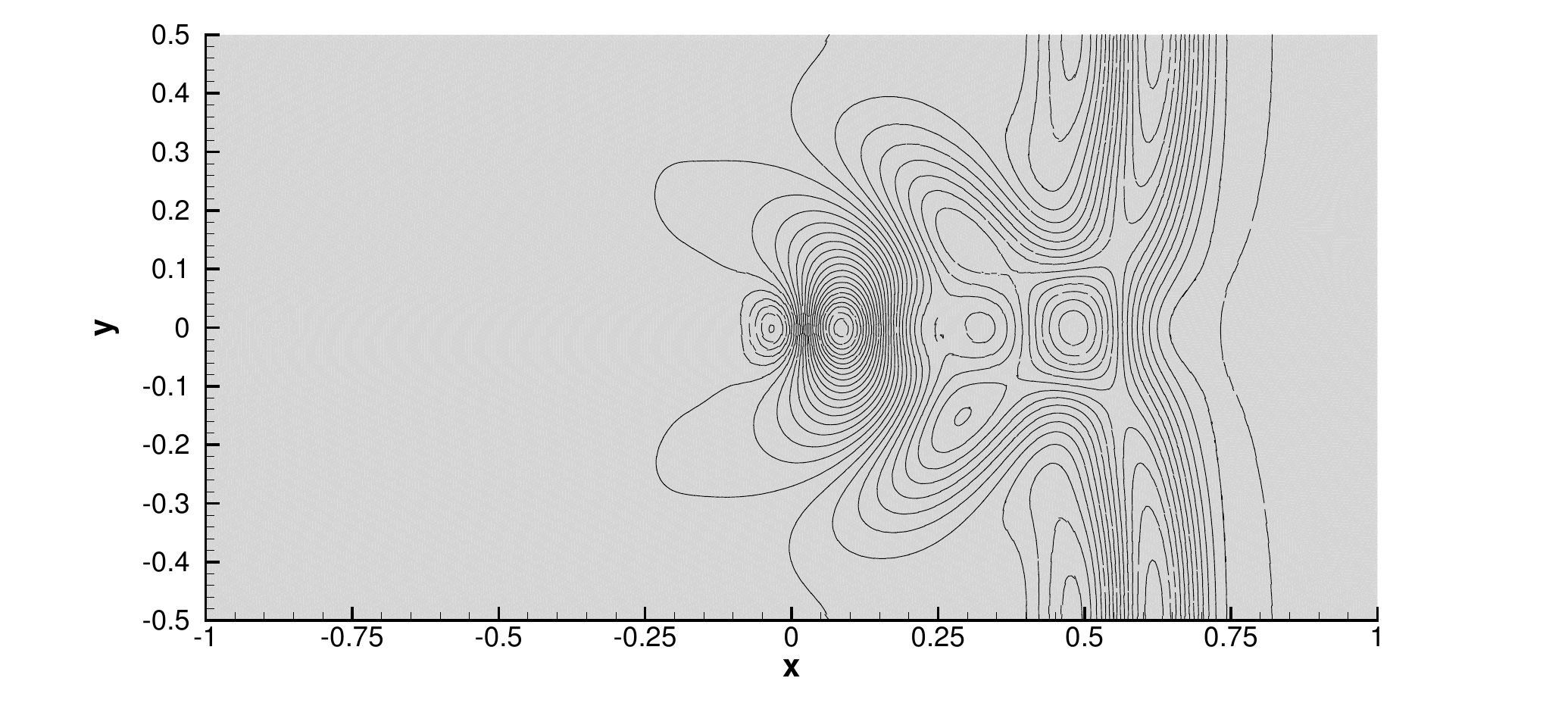}    \\
		\end{tabular}
		\caption{Well-balance test with small perturbation of the free surface ($\delta=10^{-2}$). 50 equidistant contour lines in the interval $\eta=[0.995;1.008]$ are shown at output times $t=0.12$, $t=0.24$, $t=0.36$ and $t=0.48$ (from top left to bottom right panel).}
		\label{fig.Ctest2}
	\end{center}
\end{figure}

\subsection{Circular dambreak (SWE)}

To simulate shock waves, we will use the circular dam break problem over a bottom step presented in \cite{TavelliSWE2014,StagDG_Dumbser2013}. The computational domain is a circle with radius $r=|\xx|\leq 2$, denoted by $\Omega=\{\xx \in \mathds{R}^2 : \, r=|\xx|\leq 2\}$, with Dirichlet boundary conditions everywhere. The computational grid has a total number of $N_P=34477$ cells, with a characteristic mesh size $h=1/50$. The initial condition for the problem reads
\begin{equation}
	\eta(\xx,0) = \left\{ \begin{array}{cc}
		1.0 & \textnormal{ if } r \leq 1 \\
		0.5 & \textnormal{ if } r > 1
	\end{array}\right., \qquad b(\xx) = \left\{ \begin{array}{cc}
		0.2 & \textnormal{ if } r \leq 1 \\
		0.0 & \textnormal{ if } r > 1
	\end{array}\right., \qquad \vv(\xx,0) = \mathbf{0}.
\end{equation}
The simulation is carried out until the final time $t_f=0.2$, at which the solution exhibits a contact wave travelling towards the centre of the domain, as well as a shock wave heading towards the outer boundary. Additionally, due to the presence of the bottom step, there is a discontinuity in the flow at $r=1$. Figure \ref{fig.dambreak} depicts the results of the simulation along with a comparison against a reference solution. The reference solution has been computed by solving the one-dimensional (1D) SWE in the radial direction with geometric reaction source terms, using a classical shock capturing MUSCL-TVD finite volume scheme with 10000 cells \cite{ToroBook}. The figure shows that there is a very good agreement between the numerical and reference solutions, with no spurious oscillations occurring in the plateau between the different discontinuities. Note that numerical dissipation is only present in the finite volume solver for the convective terms and not in the pressure solver \cite{SIFVDG}, as required in \cite{Busto_SWE2022}.

\begin{figure}[!htbp]
	\begin{center}
		\begin{tabular}{cc}
			\includegraphics[width=0.47\textwidth]{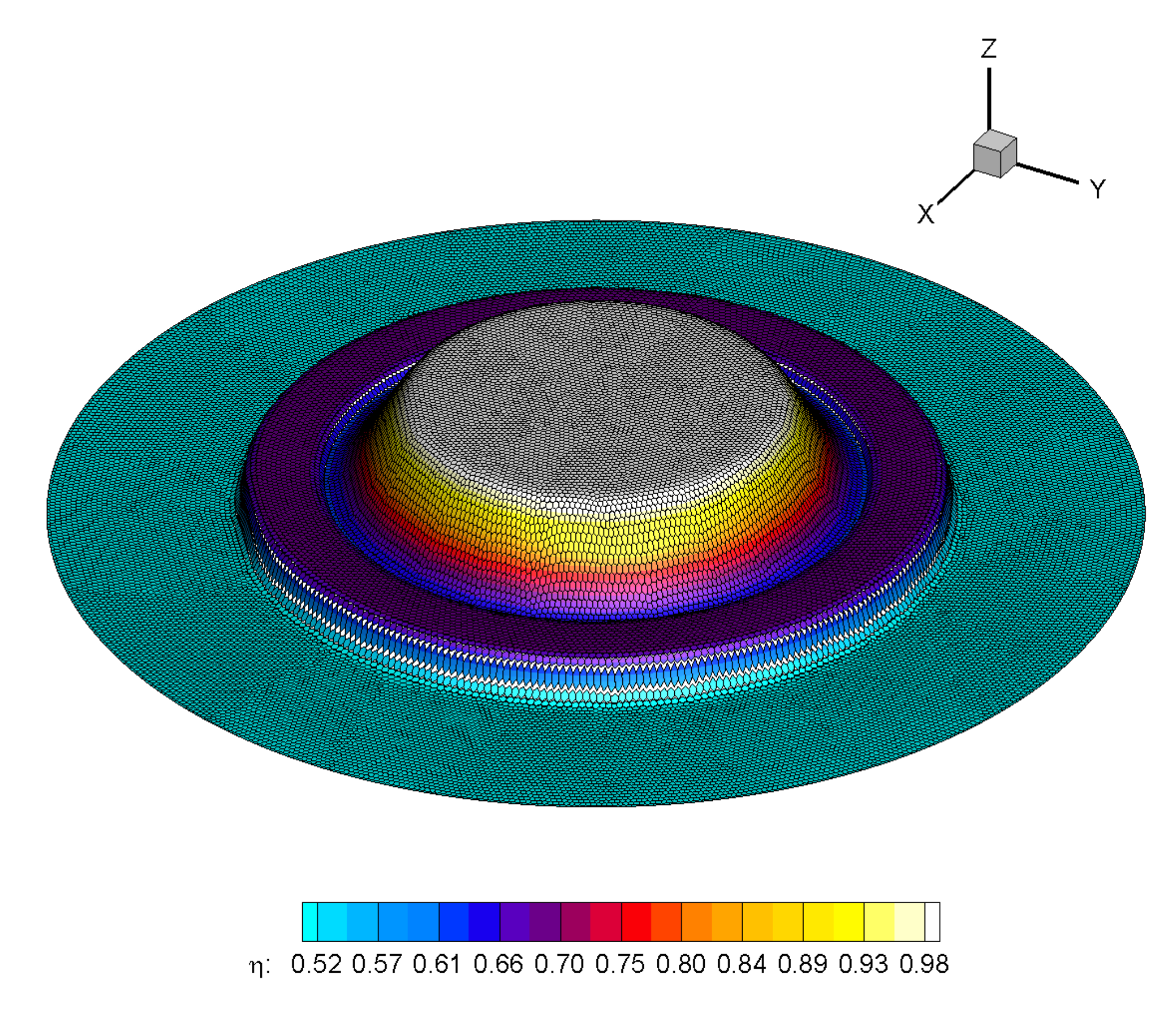} &
			\includegraphics[width=0.47\textwidth]{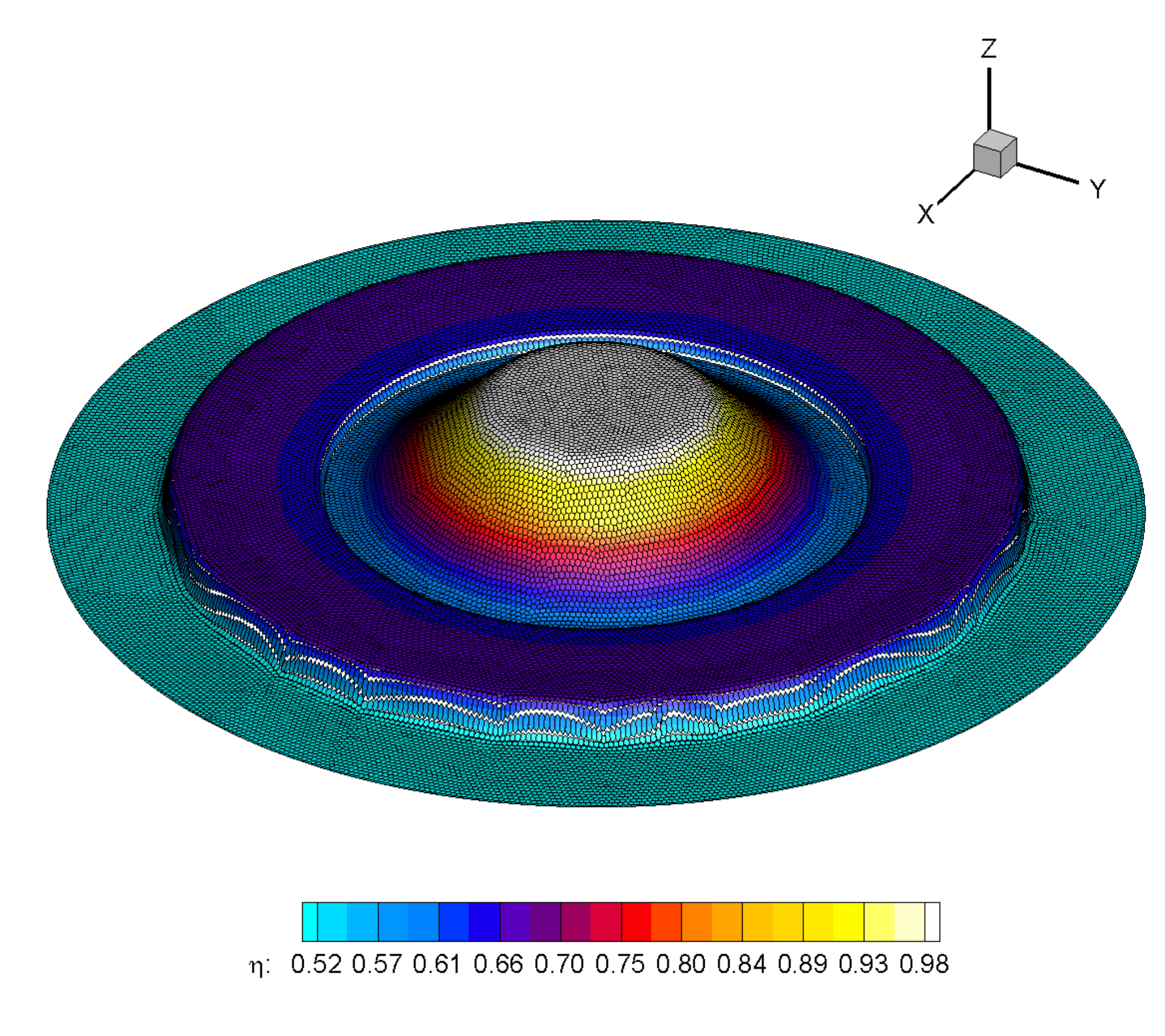} \\   	   
			\includegraphics[width=0.47\textwidth]{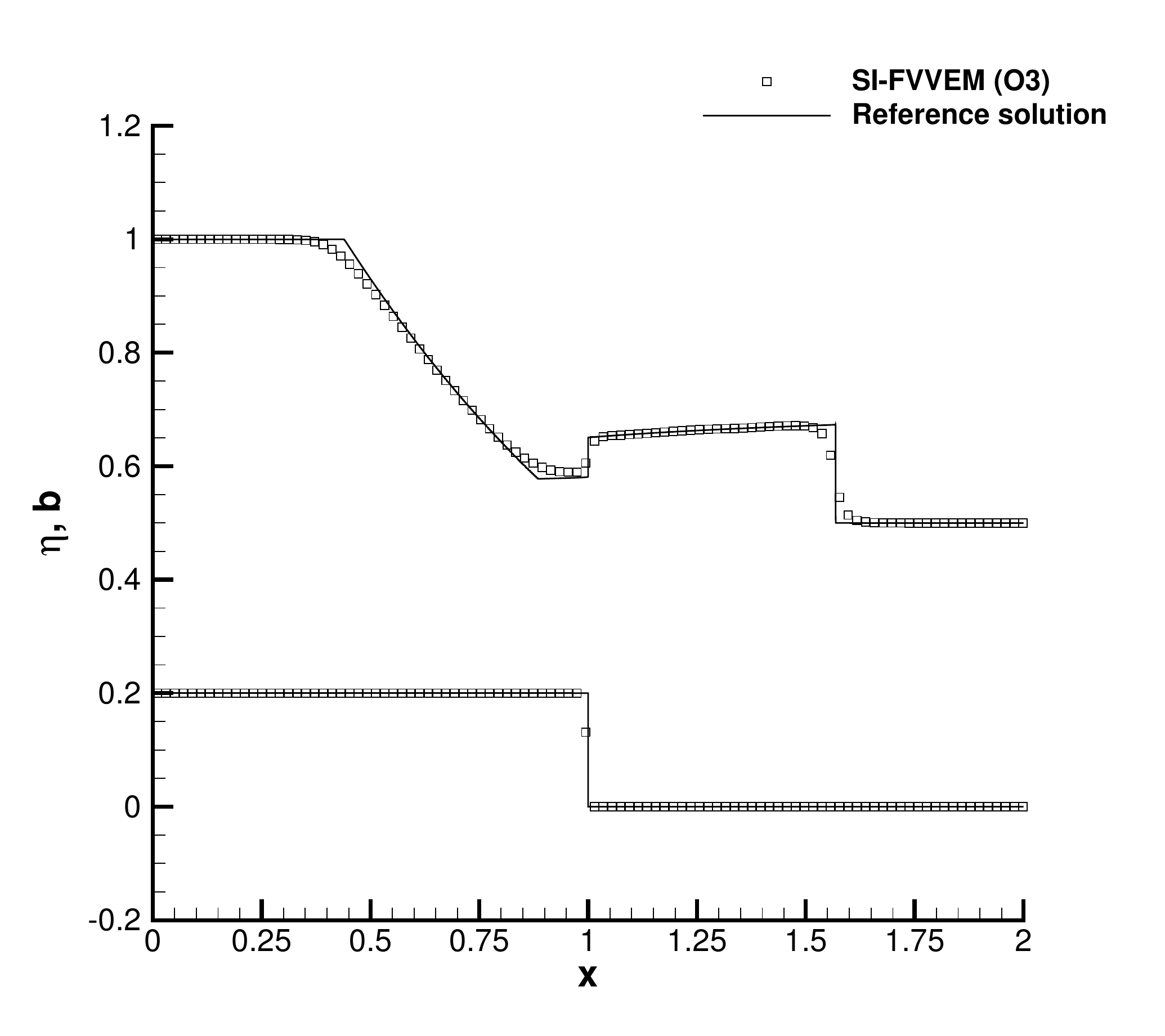} &  			\includegraphics[width=0.47\textwidth]{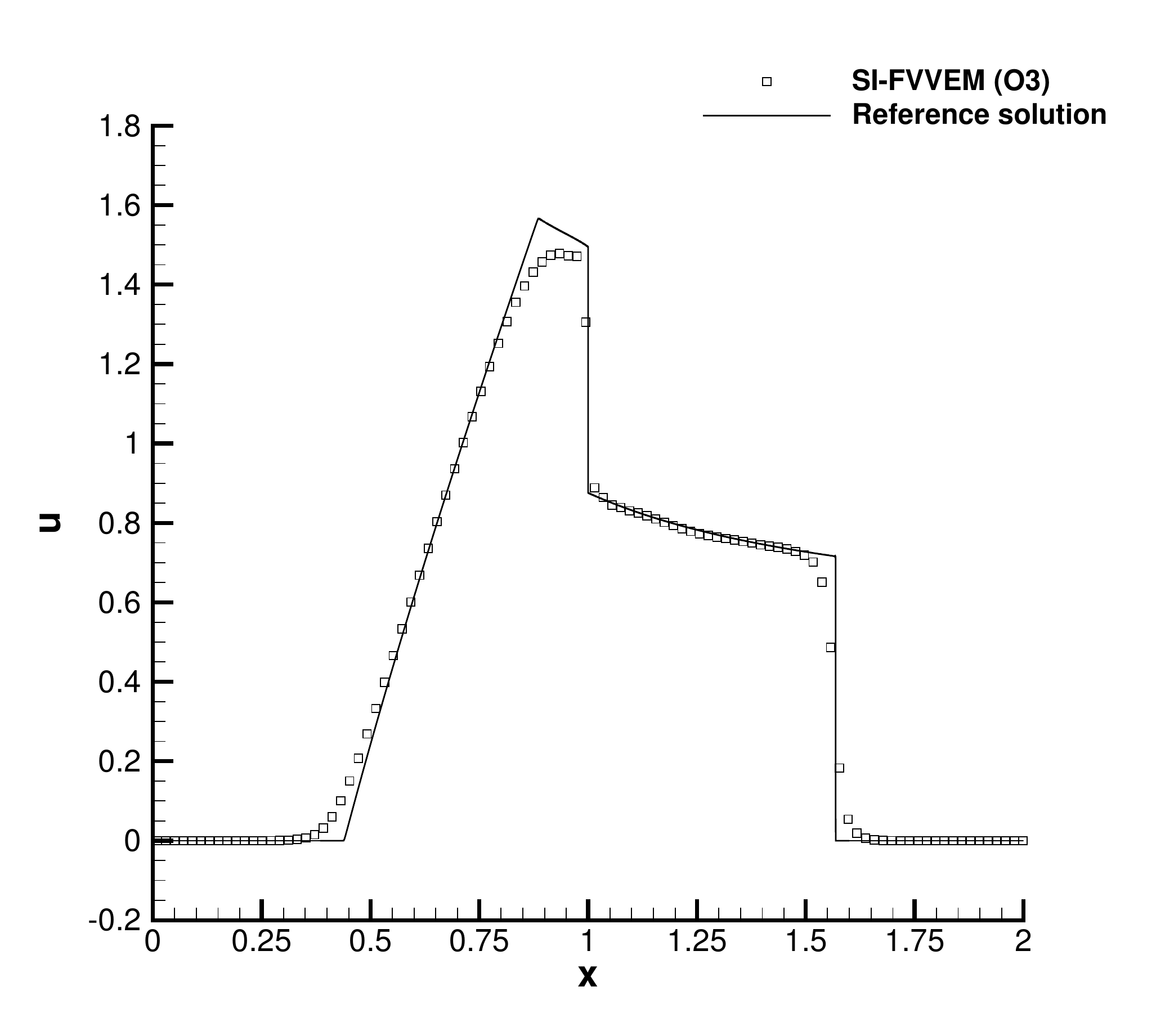} \\
		\end{tabular}
		\caption{Circular dam break problem. Top: 3D view of the free surface elevation at time $t=0.1$ (left) and $t=0.2$ (right). Bottom: 1D cut along the line $y=0$ of the numerical solution compared with the reference solution at time $t_f=0.2$ for the free surface and bottom profile (left) as well as for the horizontal velocity component (right).}
		\label{fig.dambreak}
	\end{center}
\end{figure}

\subsection{Riemann problems (SWE)}

To verify shock-capturing and conservation properties, a set of Riemann problems is considered, which include both flat and variable bottom topography. The exact solution is obtained using the Riemann solver proposed in \cite{ToroBookSWE} and \cite{ToroSWERP} for flat and variable bottom, respectively. The initial condition is given in terms of two states $\Q_L=(\eta_L,u_L,b_L)$ and $\Q_R=(\eta_R,u_R,b_R)$ separated by a discontinuity located at position $x=x_d$:
\begin{equation}
	\Q(\xx,0) = \left\{ \begin{array}{ccc} \Q_L & \textnormal{if} & x \leq x_d \\
		\Q_R & \textnormal{if} & x > x_d
	\end{array}\right. .
\end{equation}
Data of the computational domain and the initial condition for the chosen four Riemann problems are summarized in Table \ref{tab:initRP}. The computational domain in these test cases is $\Omega=[x_L, x_R] \times [x_L/10, x_R/10]$, and it is discretized using an unstructured Voronoi mesh of size $h$. This makes the computation multidimensional, despite the 1D setup of the test problems. 
%
\begin{table}[!htbp]  
	\caption{Initialization of Riemann problems. Initial left (L) and right (R) states are reported as well as the final time of the simulation $t_f$, the computational domain $[x_L;x_R]$, the position of the initial discontinuity $x_d$ and the characteristic mesh size $h$.}  
	\begin{center} 
		\begin{small}
			\renewcommand{\arraystretch}{1.0}
			\begin{tabular}{llllllllllll} 
				\hline
				Test & $\eta_L$ & $u_L$ & $b_L$ & $\eta_R$ & $u_R$ & $b_R$ & $x_L$ & $x_R$ & $x_d$ & $h$ & $t_{f}$\\
				\hline
				RP1 \cite{ToroBookSWE}  & 1 & 0 & 0 & 2 & 0 & 0 & -0.5 & 0.5 & 0 & 1/200 & 0.075\\
				RP2                 & $10^3$ & 0 & 0 & 1 & 0 & 0 & -15 & 15 & 0 & 1/400 & 0.09\\ 
				RP3 \cite{ToroSWERP} & 1 & 0 & 0.2 & 0.5 & 0 & 0 & -5 & 5 & 0 & 1/200 & 1\\
				RP4 \cite{ToroSWERP}  & 1.46184 & 0 & 0 & 0.30873 & 0 & 0.2 & -5.0 & 5.0 & 0 & 1/200 & 1\\ 
				\hline
			\end{tabular}
		\end{small}
	\end{center}
	\label{tab:initRP}
\end{table}
%

Figure \ref{fig.RP} shows the comparison between the numerical and the reference solution through a 1D cut of 200 equidistant points along the $x-$axis of the computational domain at $y=0$. The first two Riemann problems (RP1 and RP2) assume a constant flat bathymetry and involve shock and rarefaction waves. The remaining Riemann problems (RP3 and RP4) are concerned with a jump in the bottom elevation of height $\Delta b=0.2$, which generates contact discontinuities. The SI-FVVEM scheme exhibits an excellent agreement and can handle supercritical flows with Froude numbers greater than one. For RP2, the maximum Froude number is $\Fr=5.73$. The FV strategy used to discretize the non-linear convective terms makes the scheme robust. The implicit treatment of the free surface elevation still ensures a stable scheme for all four Riemann problems, without any need of additional numerical dissipation. We also remark that the proposed approach is conservative by construction, correctly capturing moving shocks and the values of the plateau between two discontinuities. The 1D symmetry of the problem is preserved even in the context of arbitrary shaped polygonal cells, as depicted in the 3D views of the free surface elevation in Figure \ref{fig.RP}.
\begin{figure}[!htbp]
	\begin{center}
		\begin{tabular}{ccc} 
			\includegraphics[width=0.33\textwidth]{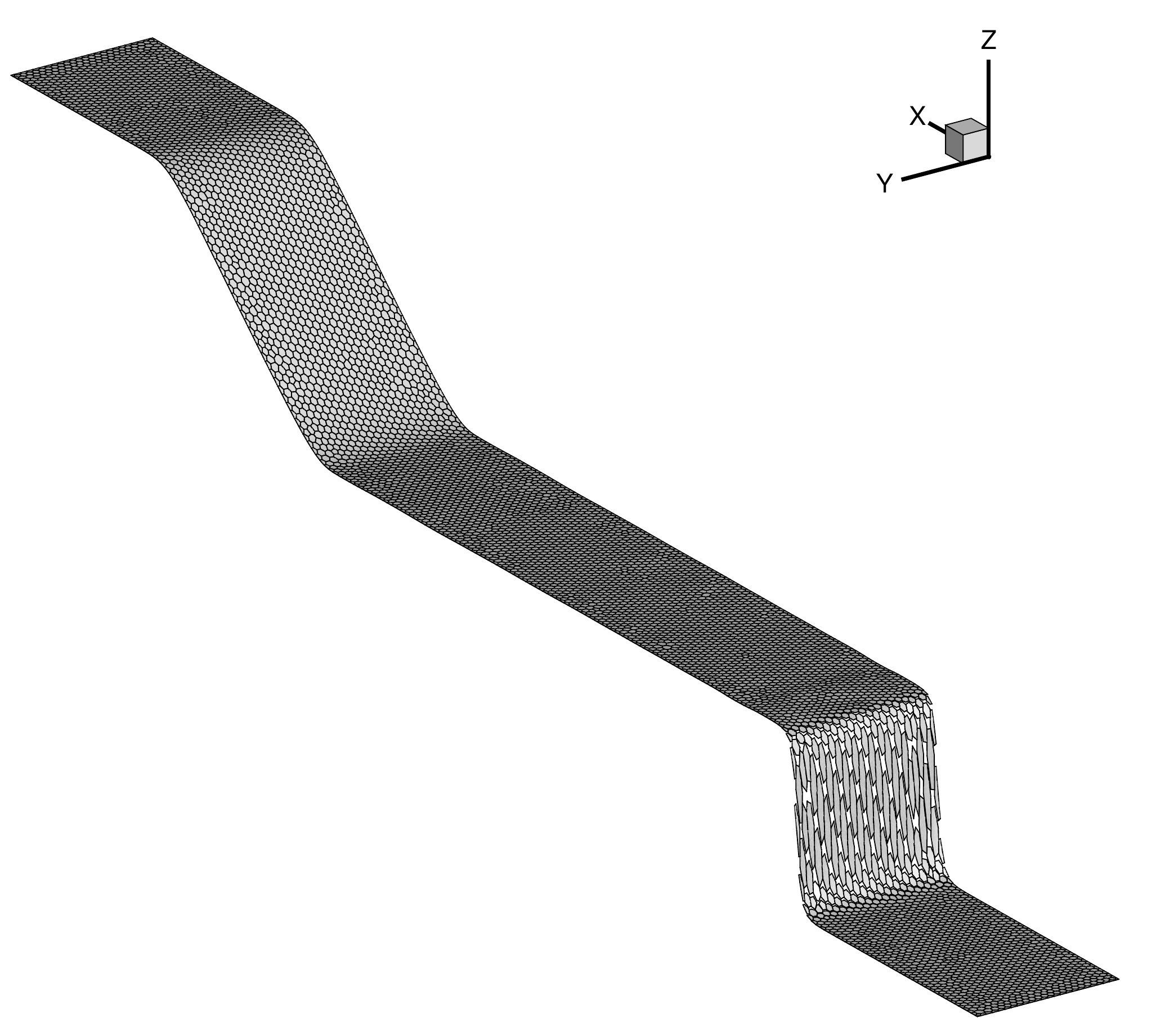} &  
			\includegraphics[width=0.33\textwidth]{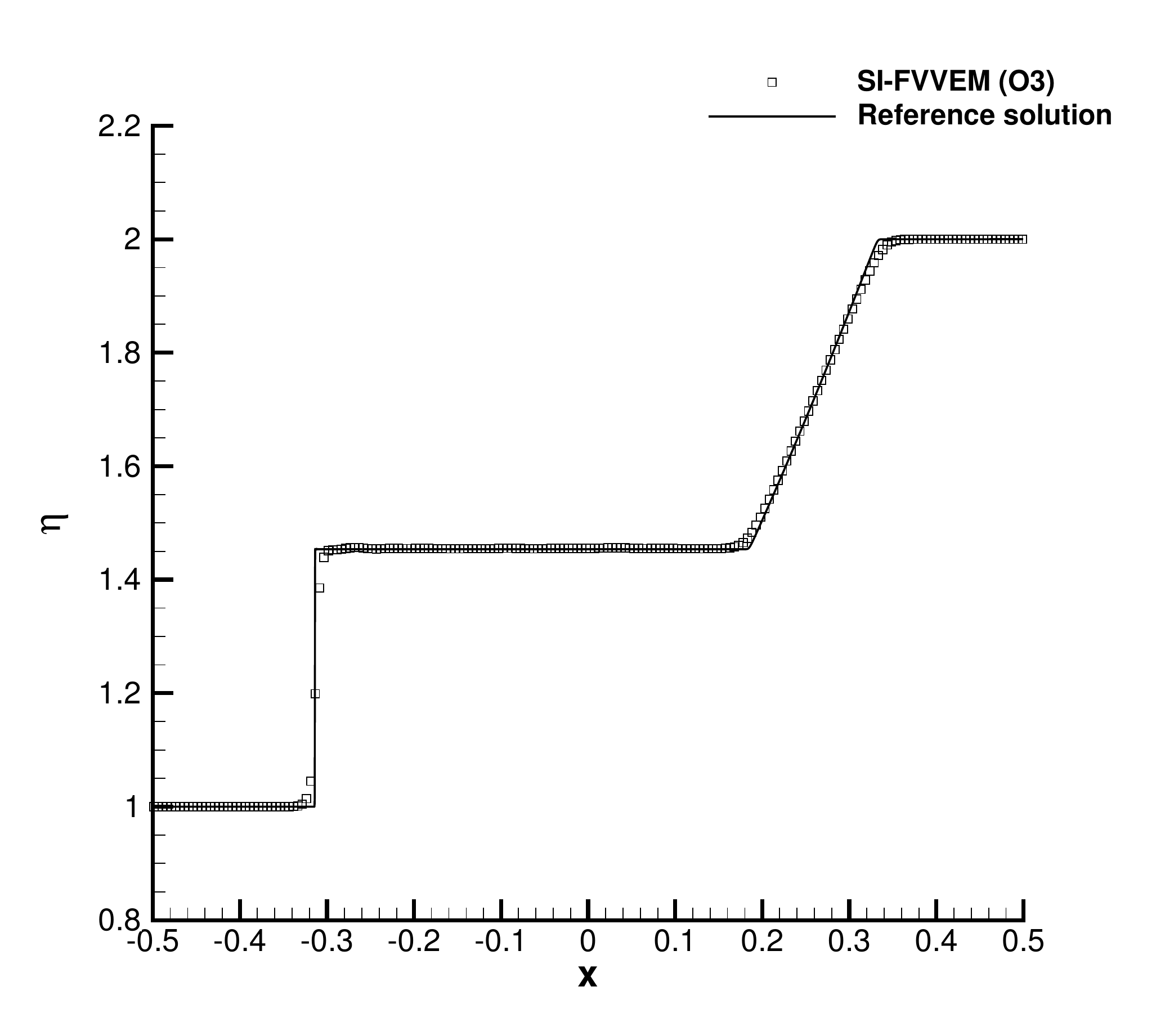} &  			\includegraphics[width=0.33\textwidth]{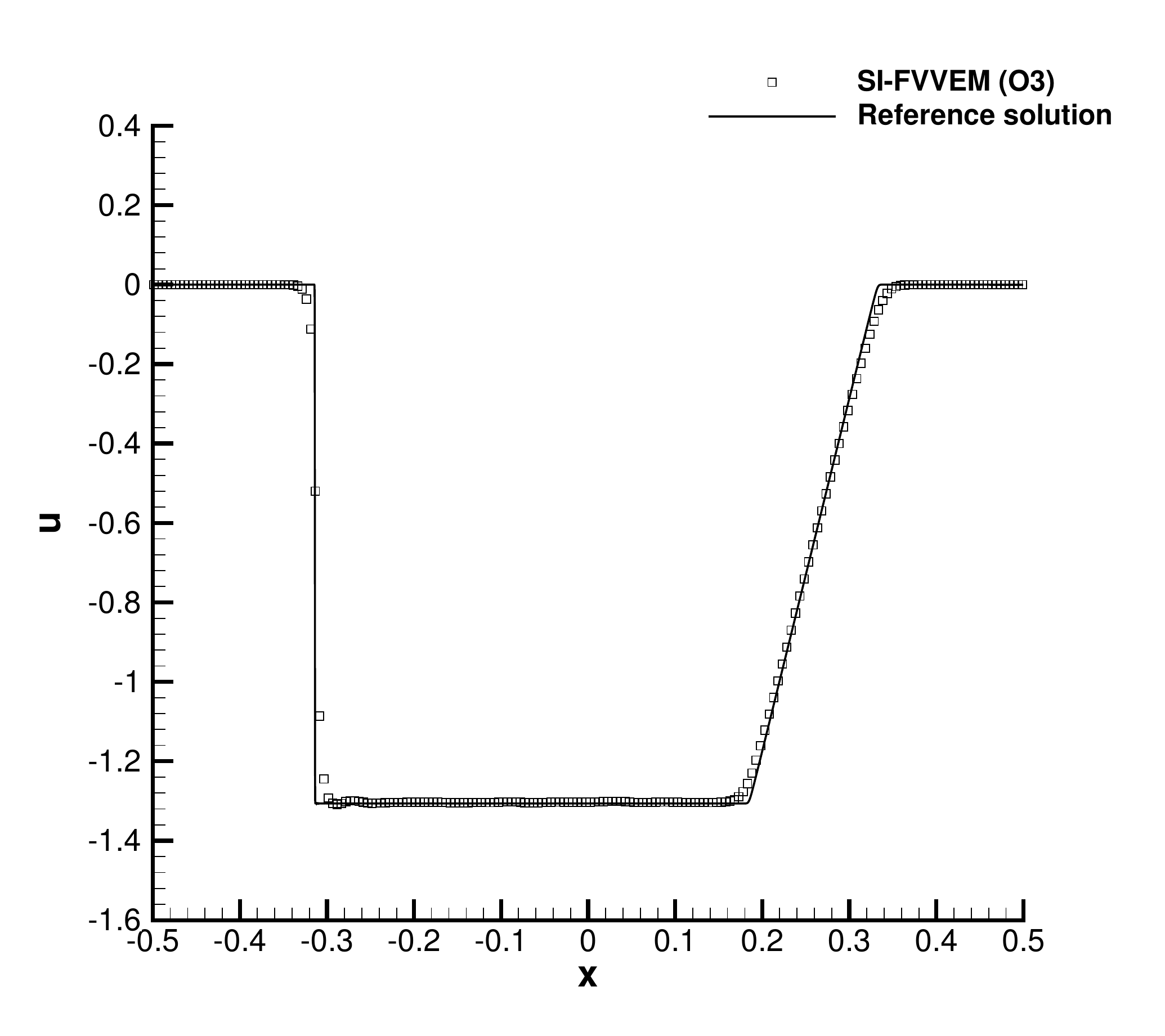} \\
			\includegraphics[width=0.33\textwidth]{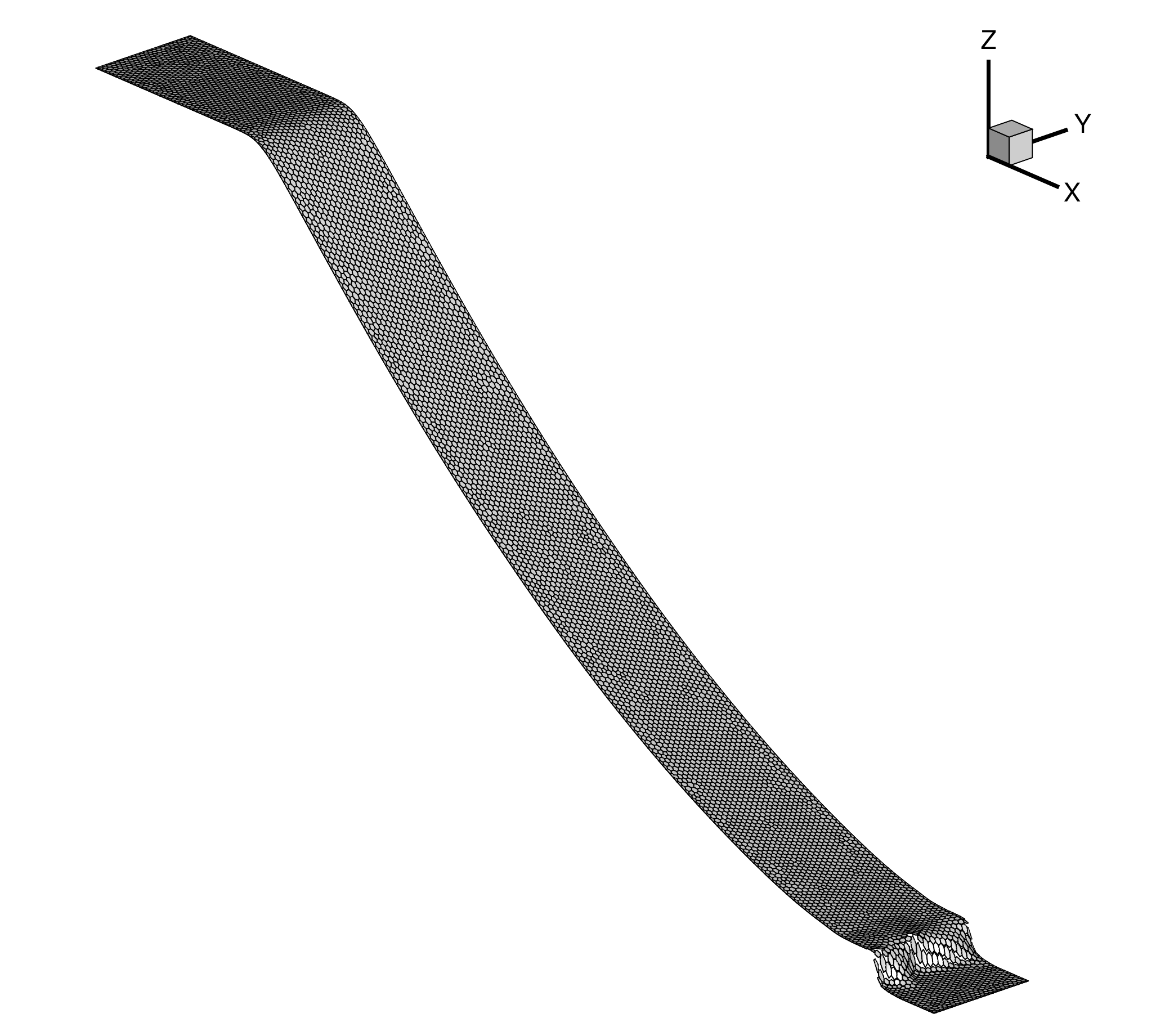} &  
			\includegraphics[width=0.33\textwidth]{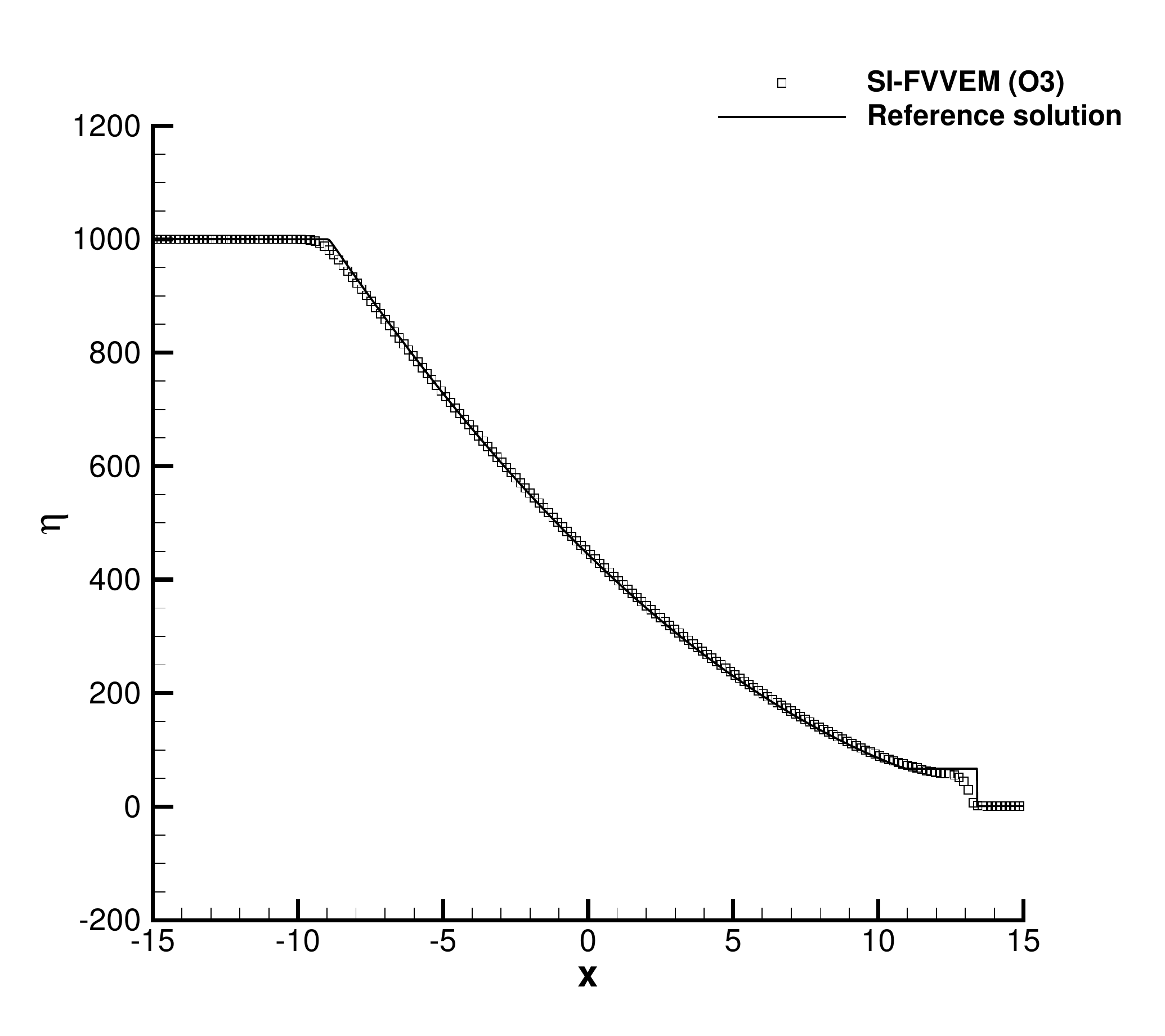} &  			\includegraphics[width=0.33\textwidth]{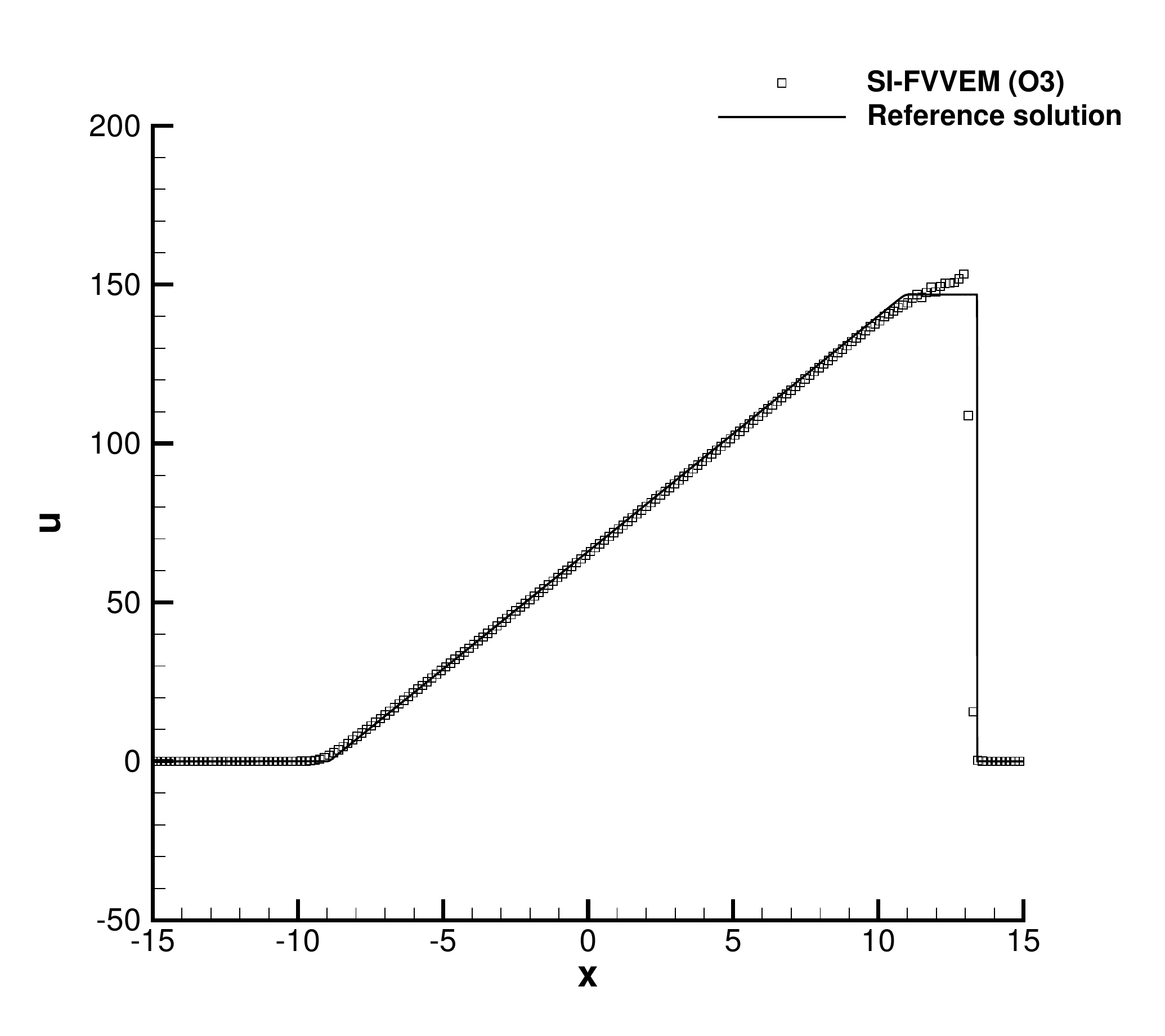} \\
			\includegraphics[width=0.33\textwidth]{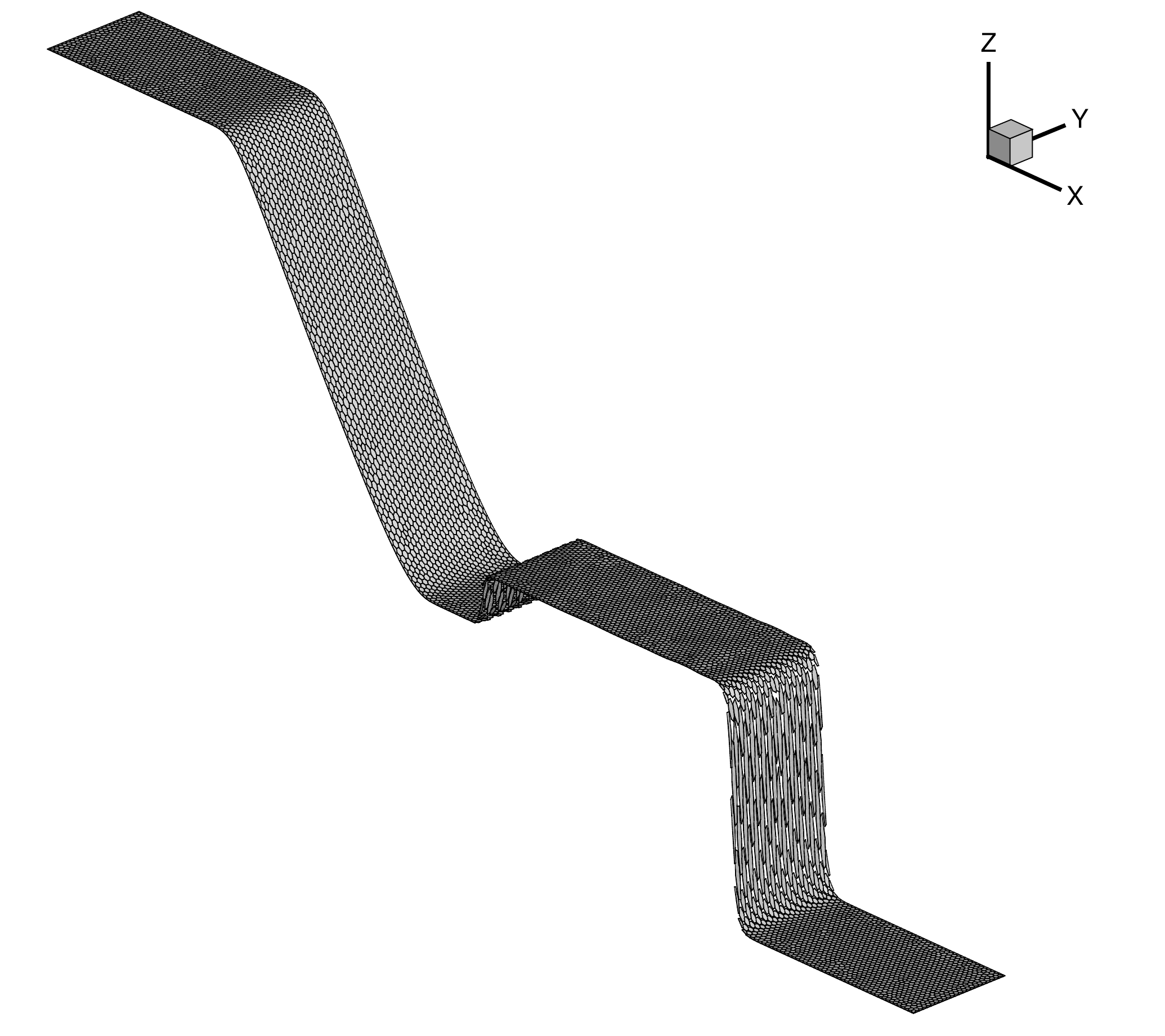} &  
			\includegraphics[width=0.33\textwidth]{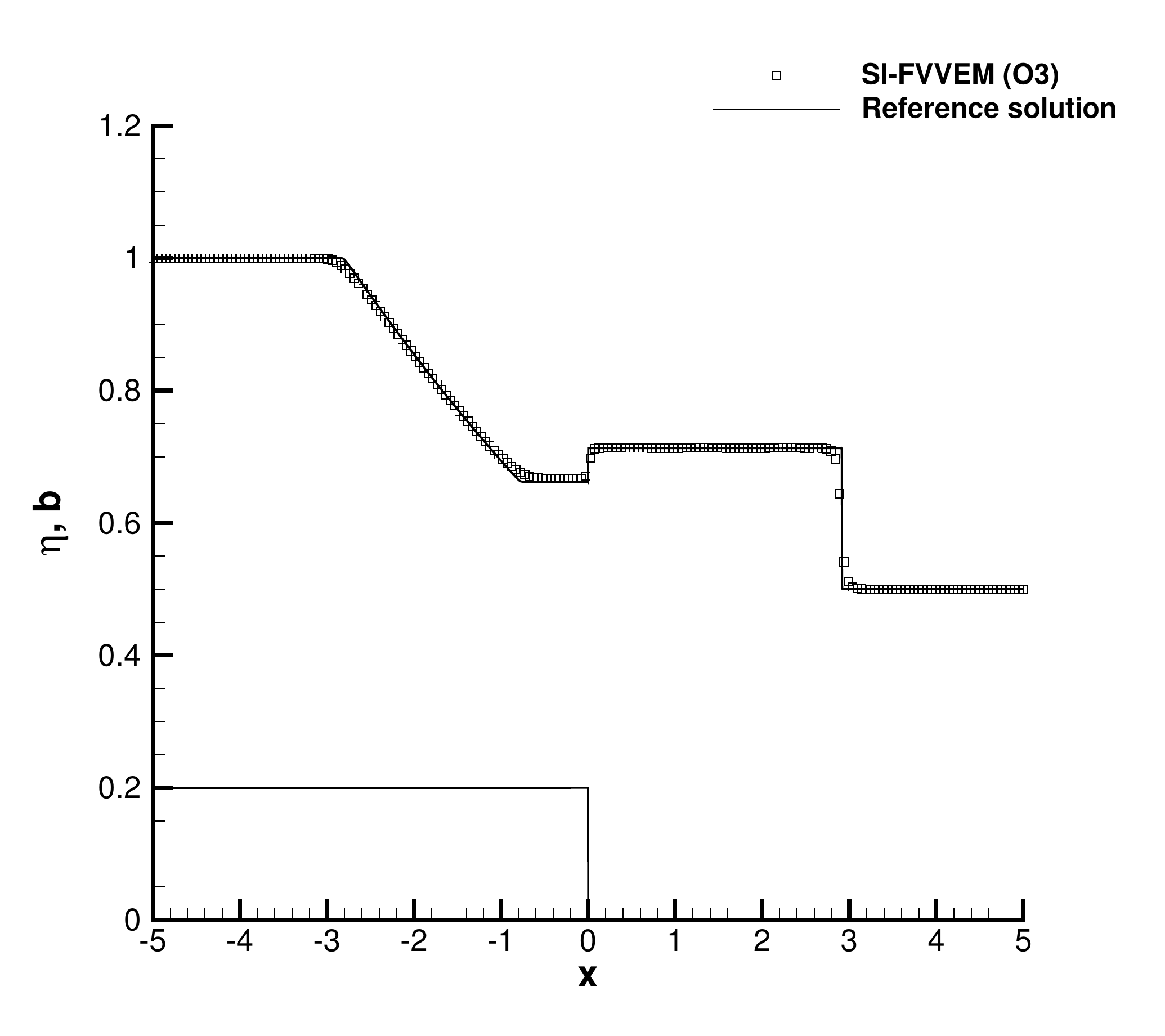} &  			\includegraphics[width=0.33\textwidth]{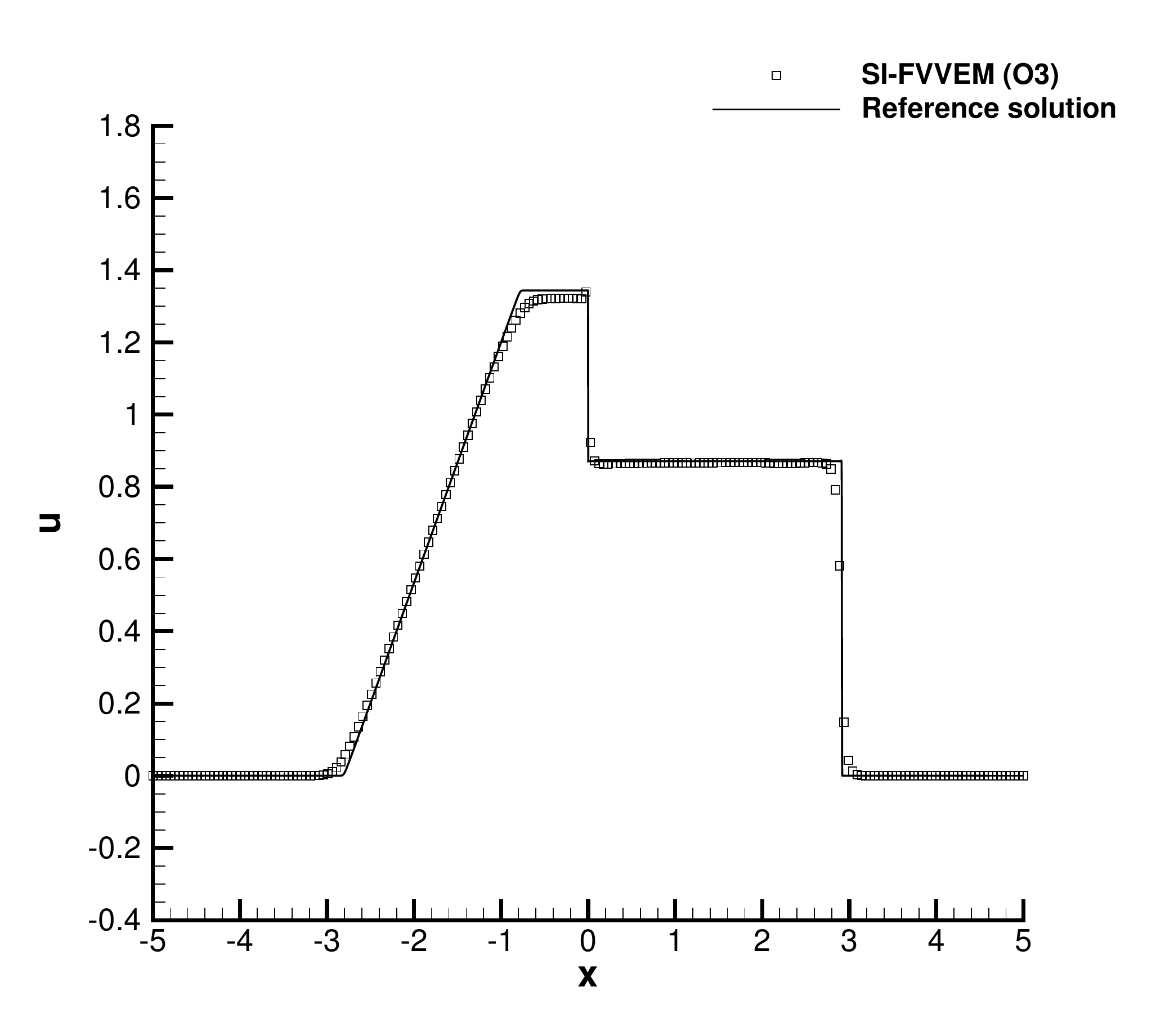} \\
			\includegraphics[width=0.33\textwidth]{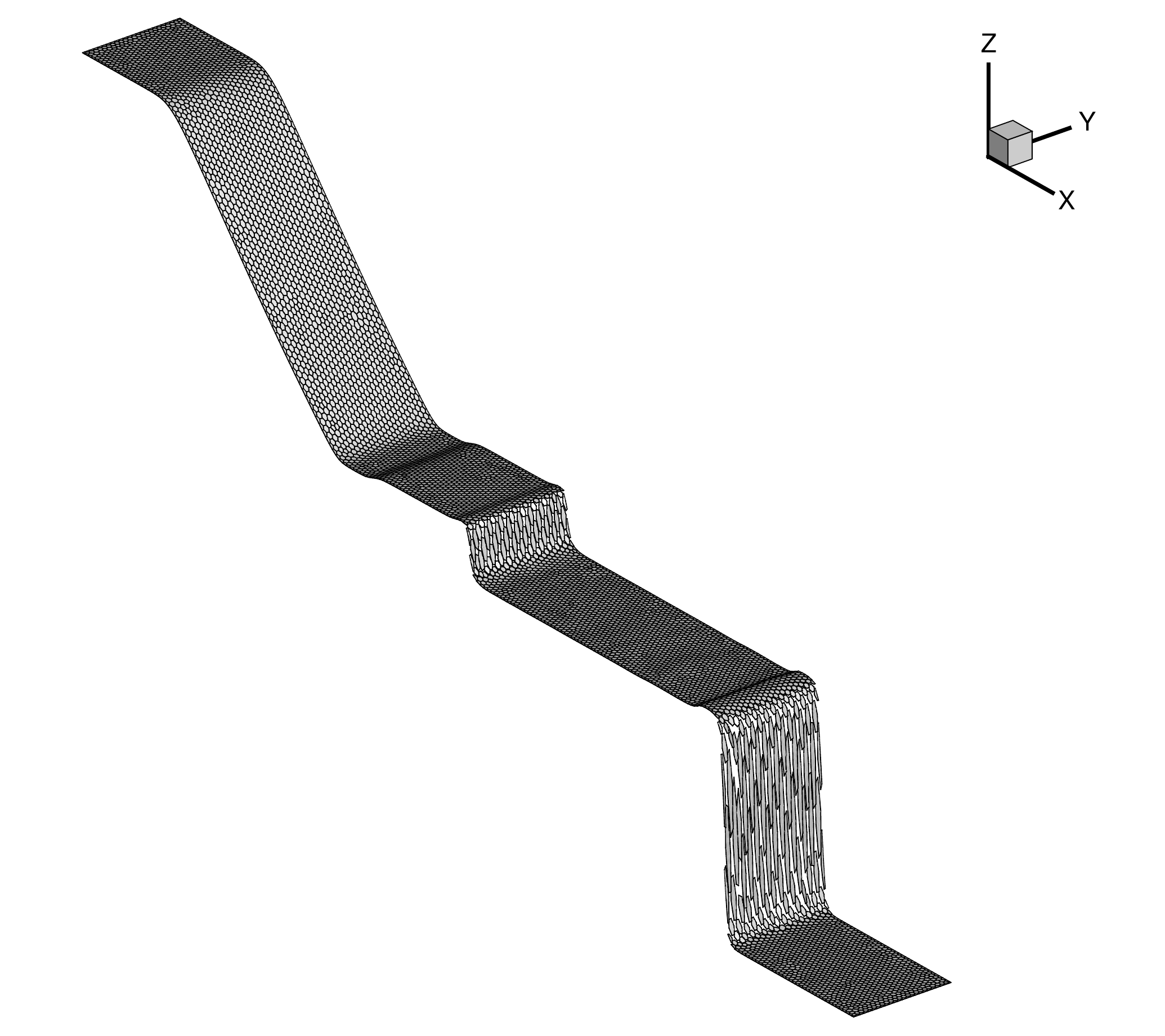} &  
			\includegraphics[width=0.33\textwidth]{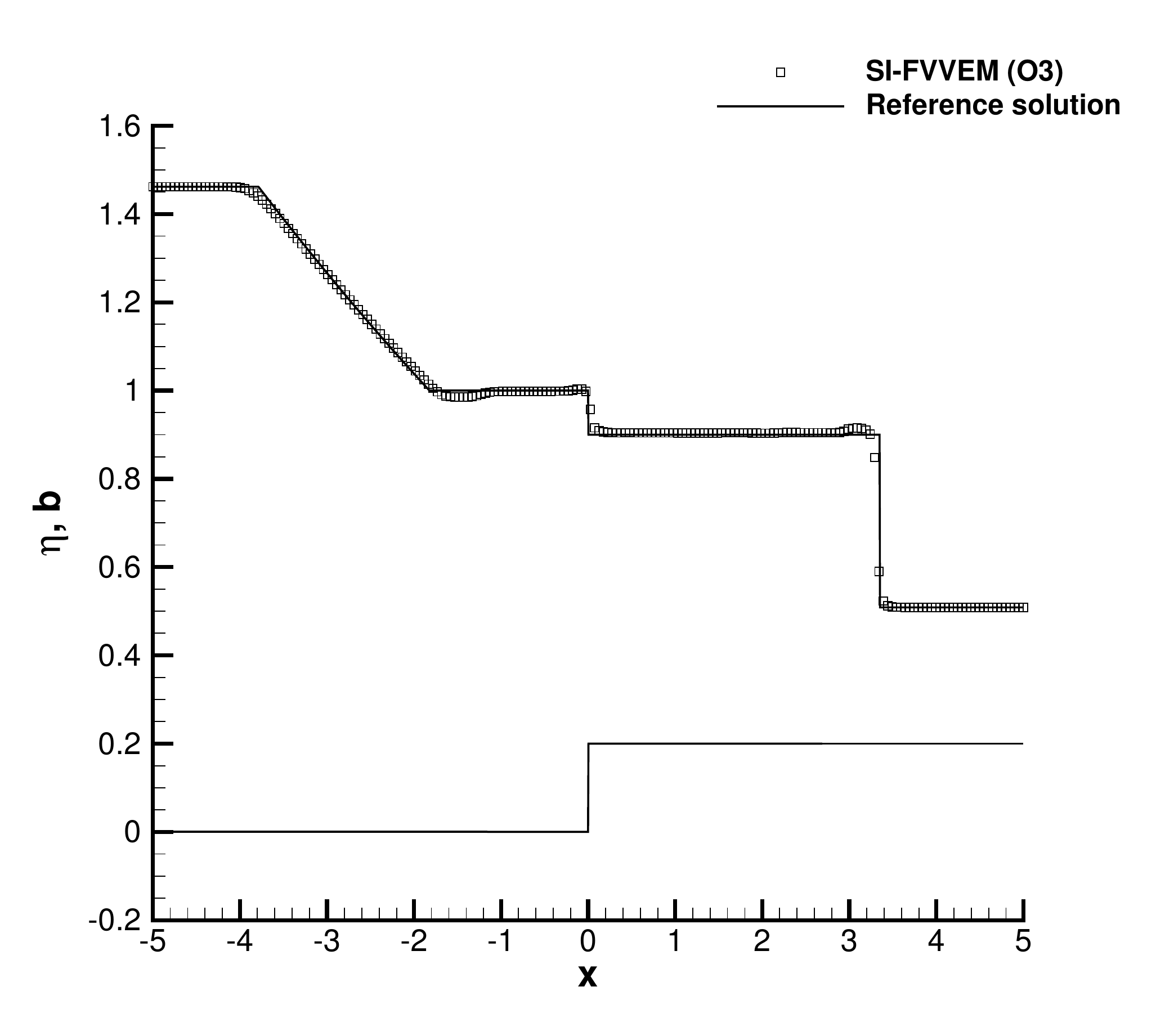} &  			\includegraphics[width=0.33\textwidth]{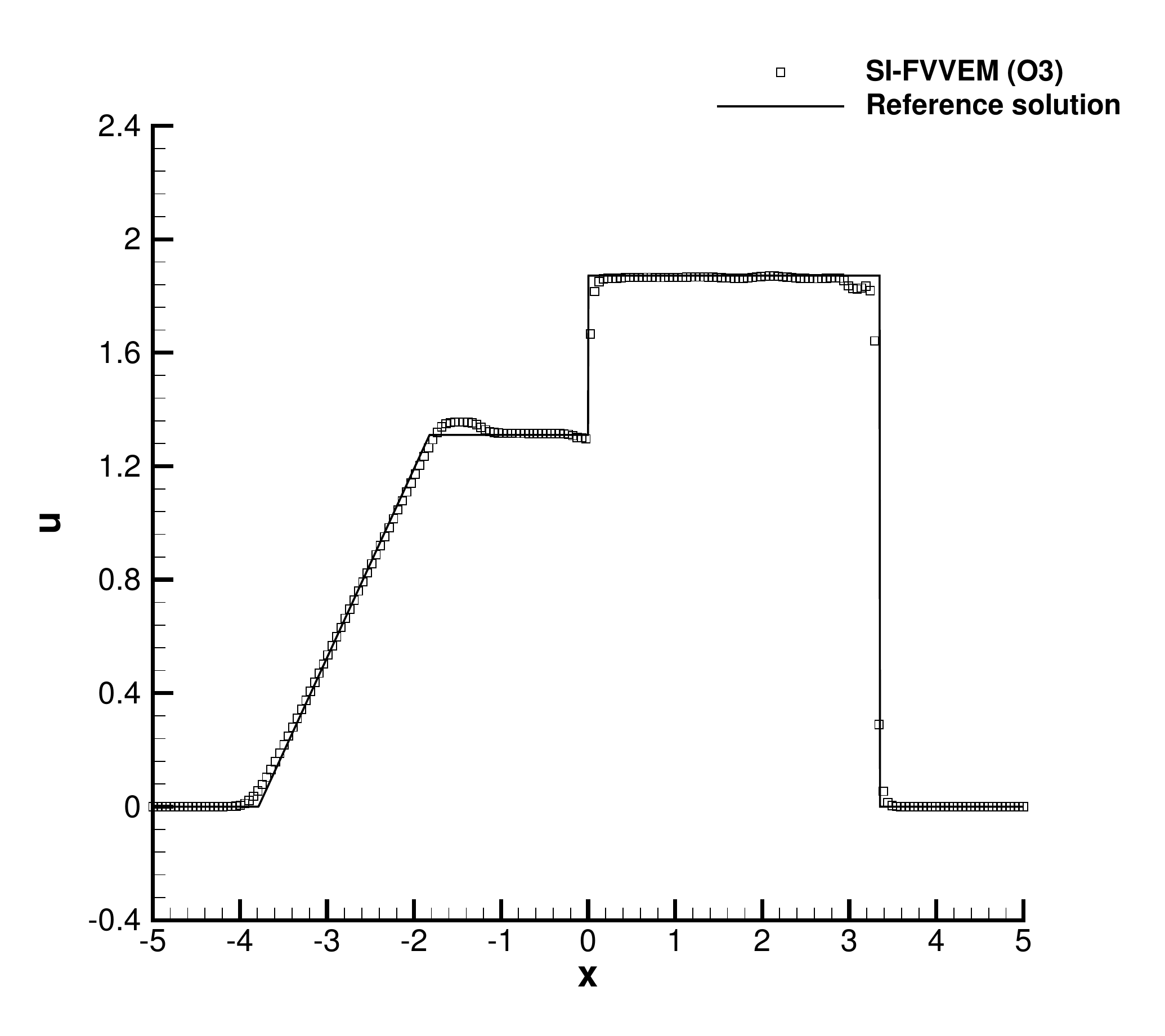} \\
		\end{tabular}
		\caption{Riemann problems RP1, RP2, RP3 and RP4 (from top to bottom row). 3D view of the free surface elevation with the Voronoi computational mesh (left) and comparison against the reference solution for the variables $\eta$ (middle) and $u$ (right).}
		\label{fig.RP}
	\end{center}
\end{figure}
%
\subsection{Smooth surface wave propagation (SWE)}

In this study, the propagation of a wave on the free surface is examined, following the configuration outlined in \cite{StagDG_Dumbser2013}. The computational domain is the square $\Omega=[-1;1]^2$, with Dirichlet boundary conditions enforced on all sides. The domain is discretized into $N_P=15717$ Voronoi cells, with a characteristic mesh size $h=1/50$. The initial condition is defined by 
\begin{equation}
	\label{eqn.smoothwave_ini}
	\eta(\xx,0) = 1 + e^{-\frac{r^2}{2\sigma^2}}, \qquad \vv(\xx,0) = \mathbf{0}, \qquad b(\xx)=0,
\end{equation}
where $\sigma=0.1$. The time step is set to $\dt=0.001$, and the simulation is run until the time $t_f=0.15$, at which the wave profile becomes stiff and a shock wave appears. Figure \ref{fig.smoothwave3D} shows a 3D view of the free surface elevation at different output times, illustrating the ability of the SI-FVVEM scheme to well preserve the symmetry of the solution even when the computational grid is not symmetric.
\begin{figure}[!htbp]
	\begin{center}
		\begin{tabular}{cc} 
			\includegraphics[width=0.40\textwidth]{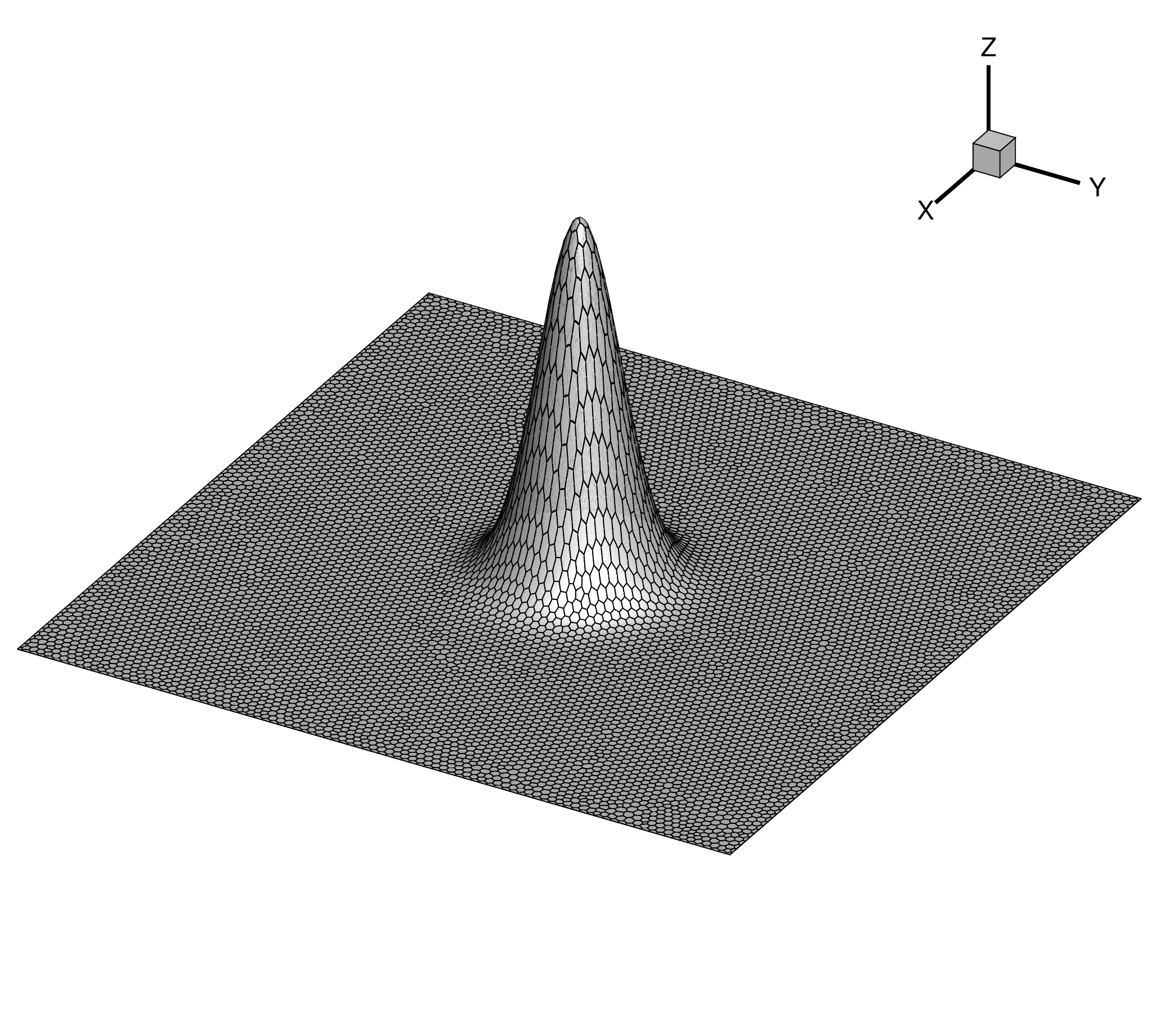} &  			\includegraphics[width=0.40\textwidth]{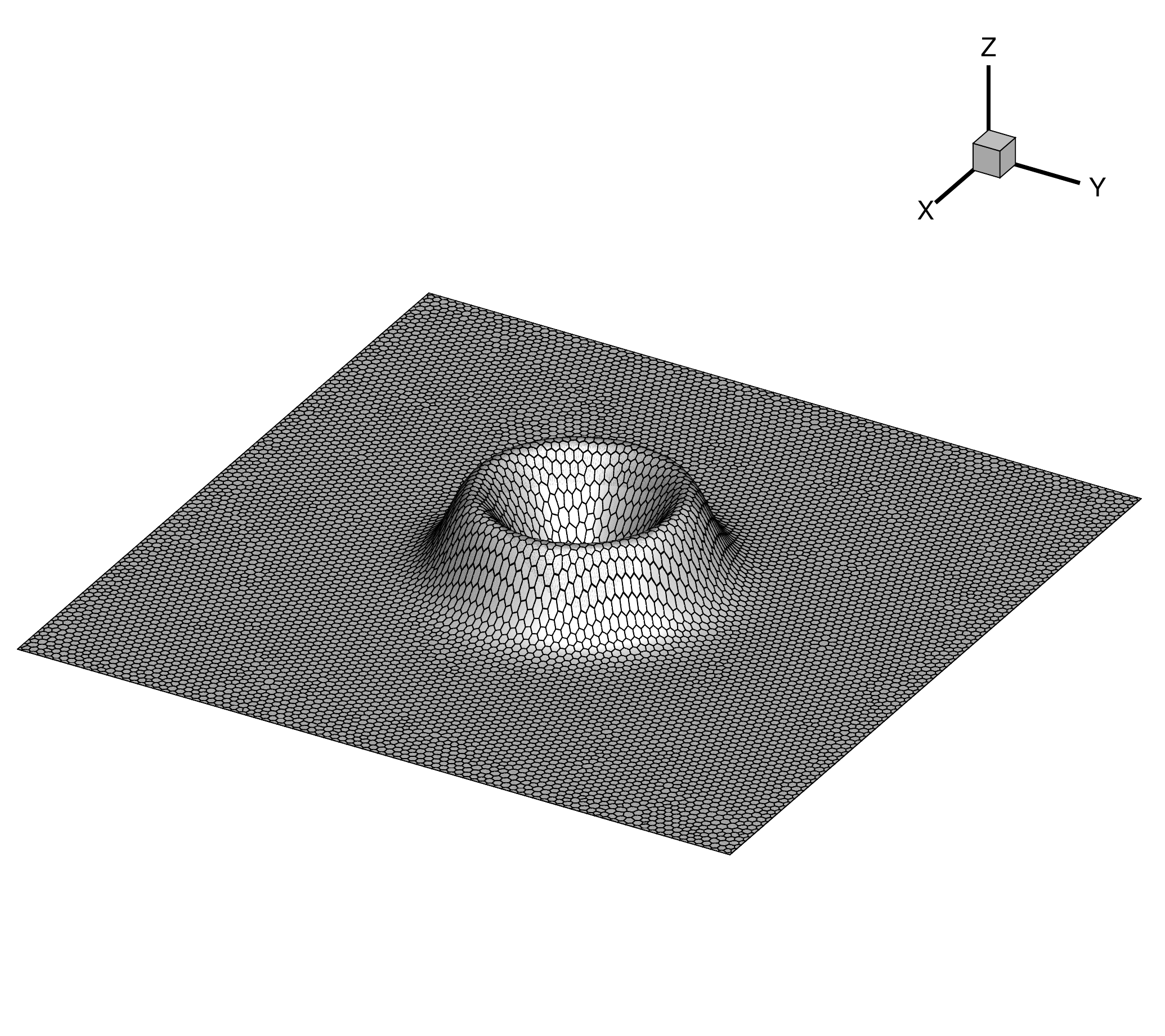} \\
			\includegraphics[width=0.40\textwidth]{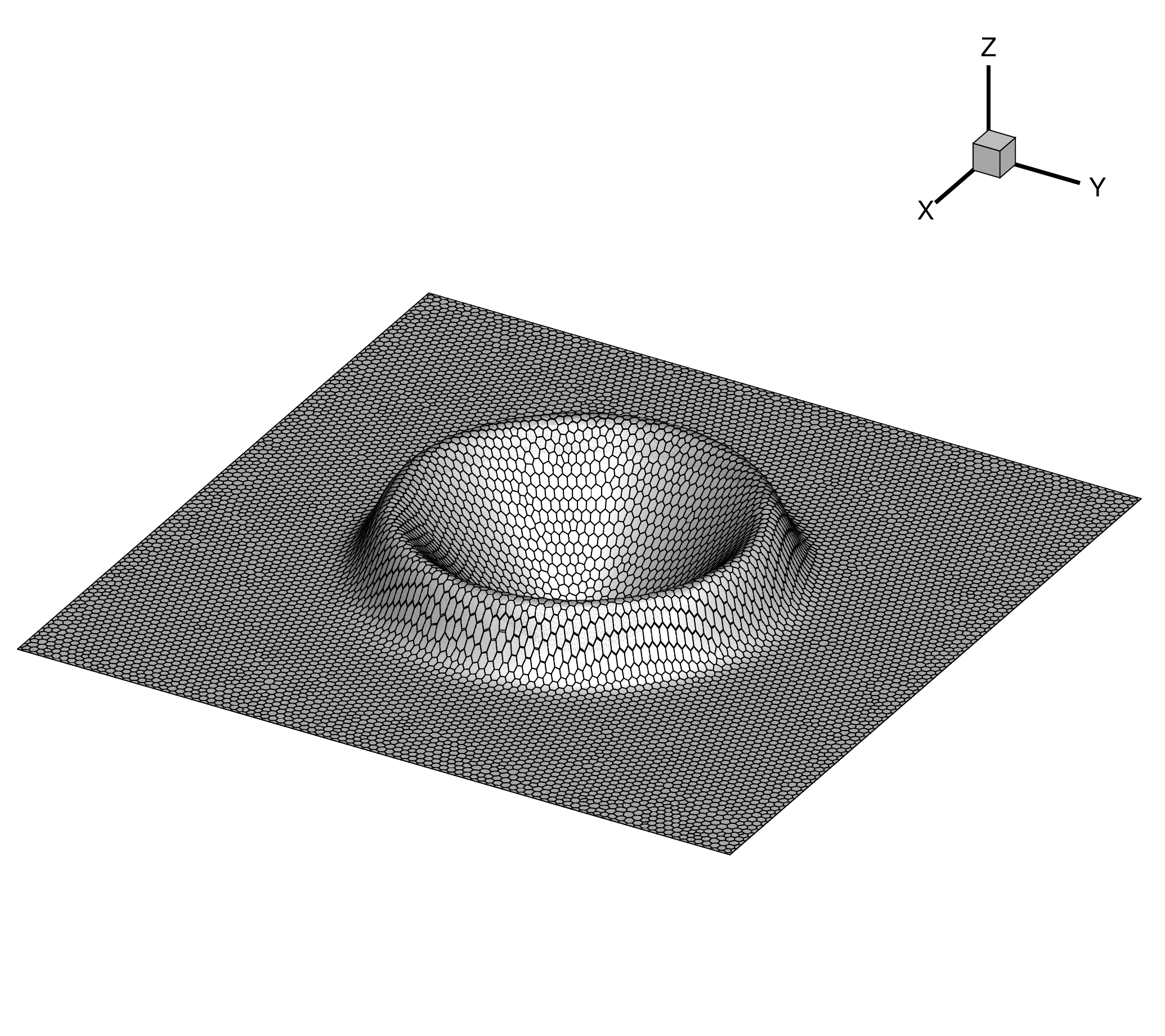} &  			\includegraphics[width=0.40\textwidth]{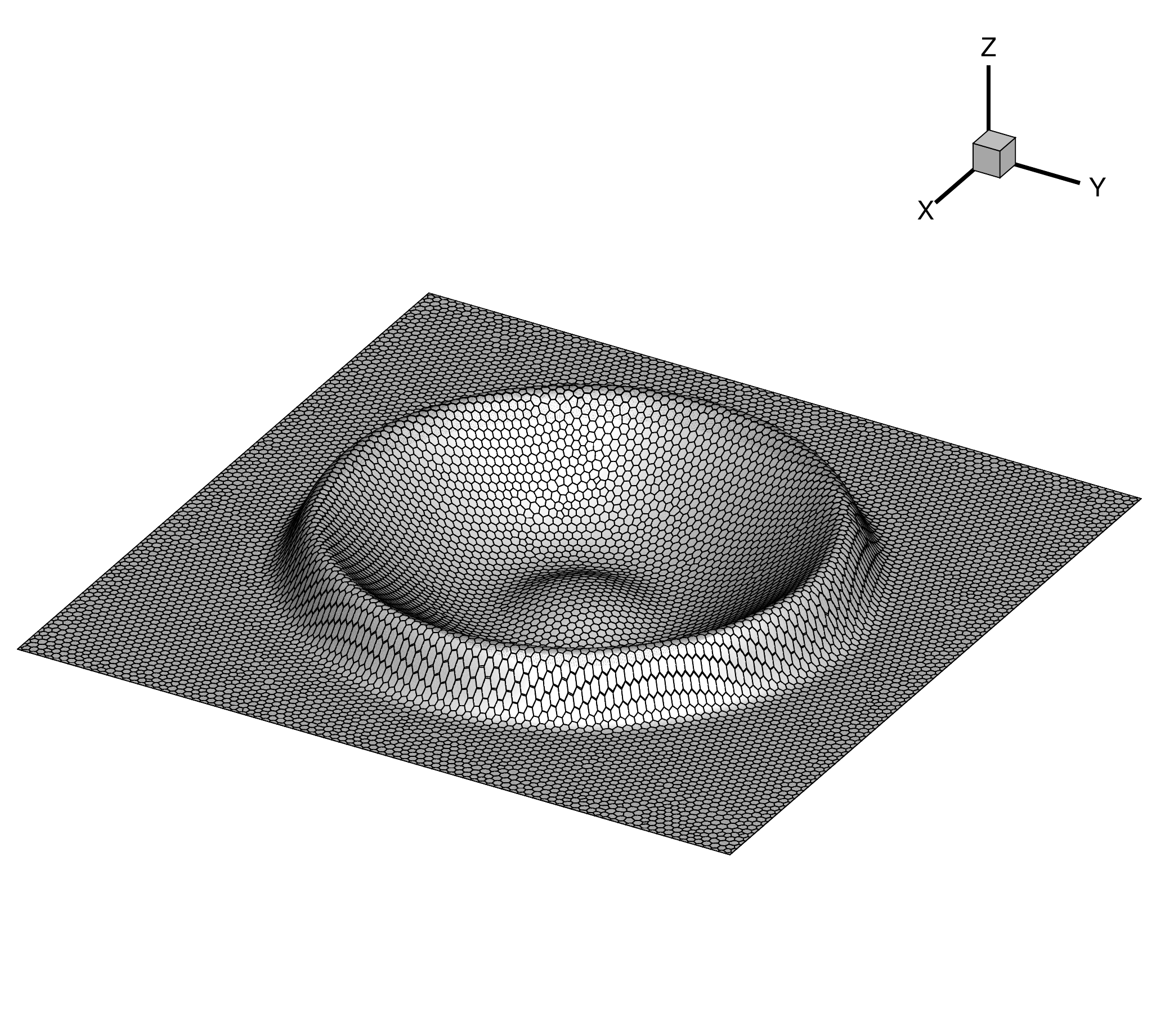} \\
		\end{tabular}
		\caption{Smooth surface wave propagation problem. 3D view of the free surface elevation and Voronoi computational mesh at output times $t=0$, $t=0.05$, $t=0.1$ and $t=0.15$ (from top left to bottom right panel).}
		\label{fig.smoothwave3D}
	\end{center}
\end{figure}
The reference solution is computed using a 1D MUSCL-TVD scheme on a very fine mesh, as done for the circular dam break problem. In Figure \ref{fig.smoothwaveXY} we plot a comparison between the numerical solution and the reference solution for the free surface elevation and horizontal velocity component. A very good agreement can be appreciated, particularly up to time $t=0.1$, when the flow is still smooth. At time $t=0.15$, the shock is smoothed by the explicit finite volume convective solver, and the SI-FVVEM scheme remains stable, without any spurious oscillations.	
\begin{figure}[!htbp]
	\begin{center}
		\begin{tabular}{cc} 
			\includegraphics[width=0.47\textwidth]{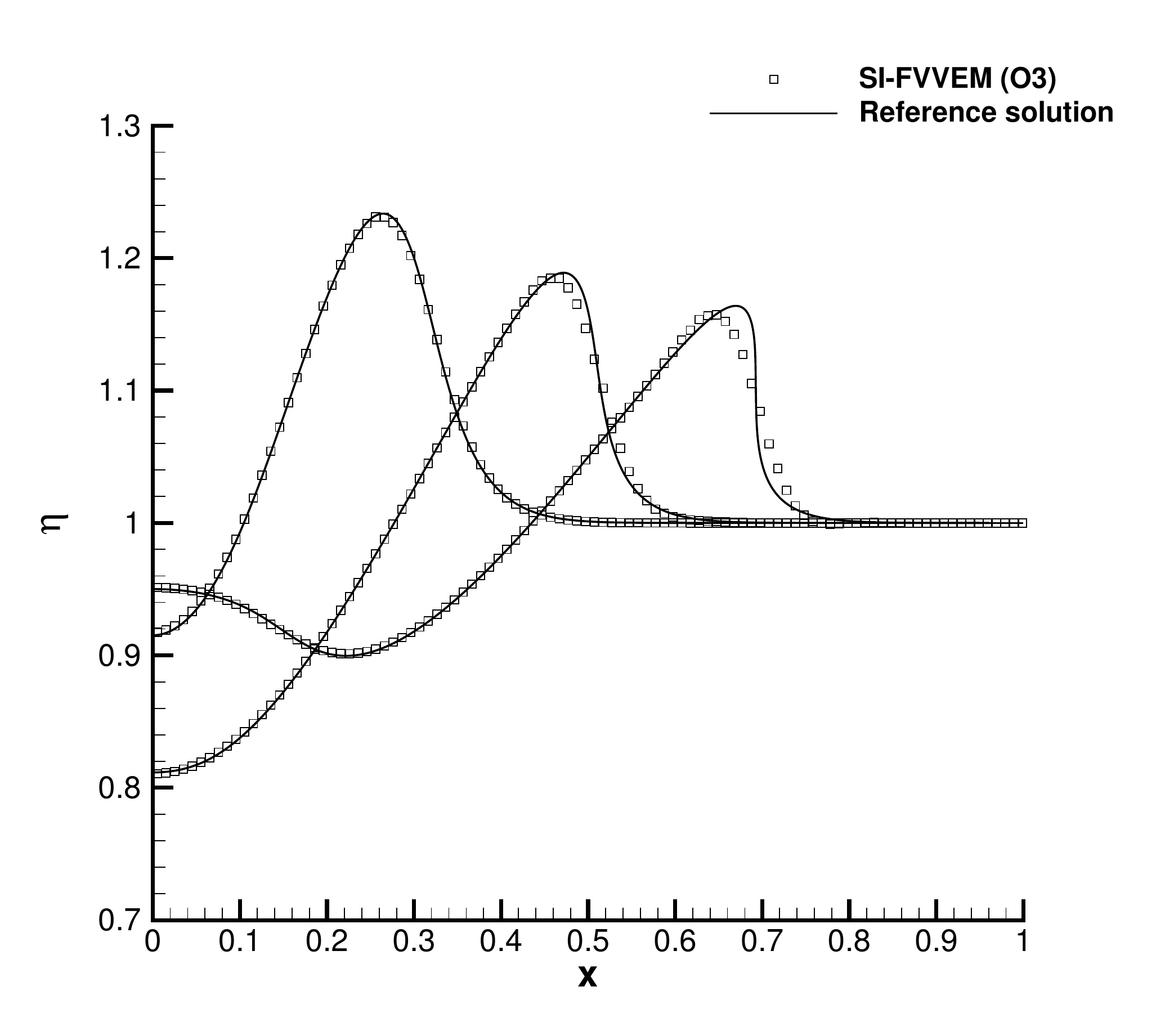} &  			\includegraphics[width=0.47\textwidth]{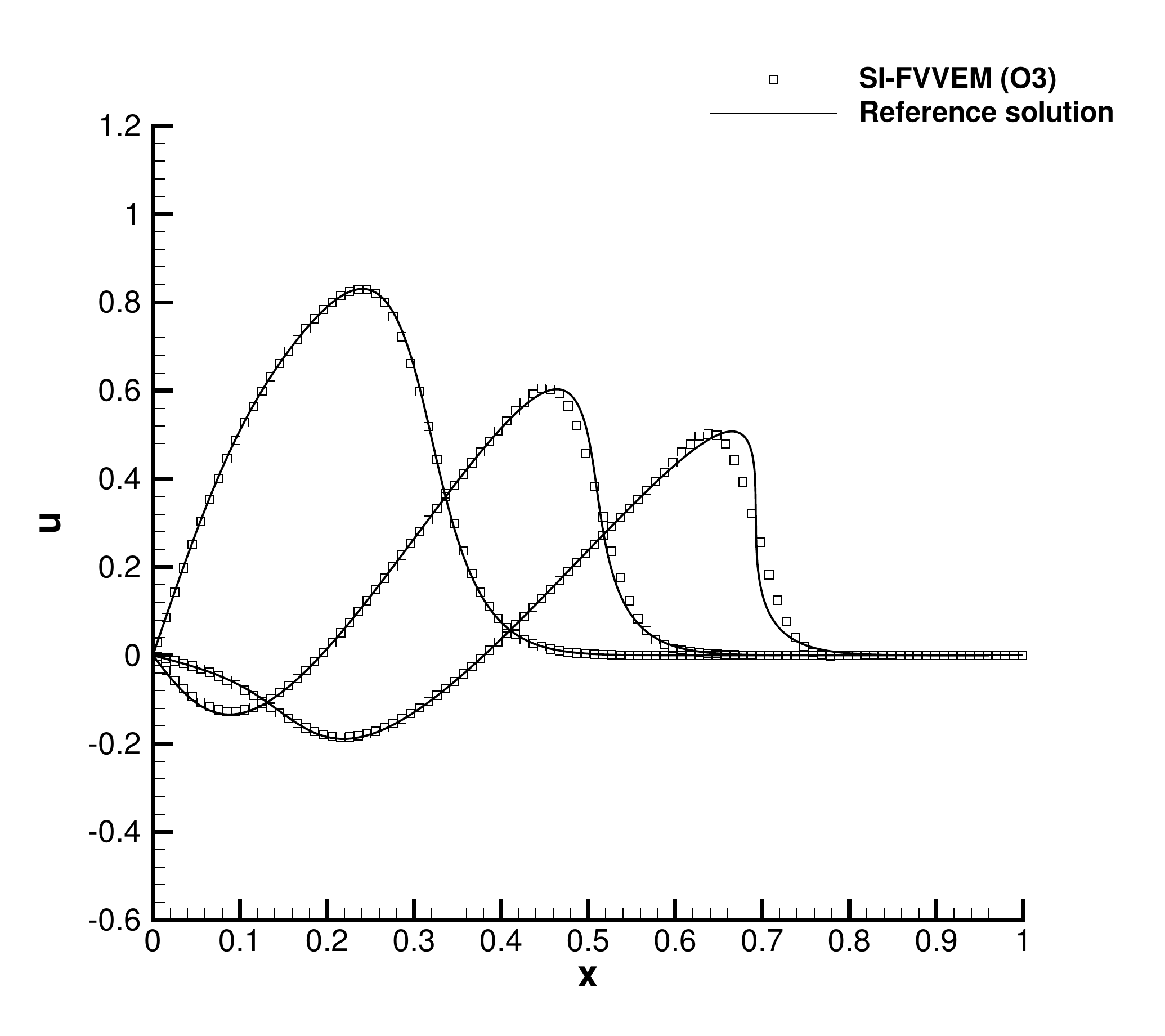} \\
		\end{tabular}
		\caption{Smooth surface wave propagation problem. Comparison between numerical and reference solution at output times $t=0.05$, $t=0.1$ and $t=0.15$ for the free surface elevation $\eta$ (left) and the horizontal velocity component $u$ (right).}
		\label{fig.smoothwaveXY}
	\end{center}
\end{figure}
%
\subsection{Low Froude number flow around a circular cylinder (SWE)}

As last test case for the SWE, we propose to simulate a low Froude ($\Fr=3.19 \cdot 10^{-3}$) flow that passes around a circular cylinder of radius $r_c=1$, as studied in \cite{Busto_SWE2022,BassiRebay97}. The computational domain is $\Omega=[-16;16]^2$, where the circular cylinder is defined as $\xx \in \R^2$ such that $r \leq r_c$, where $r=\sqrt{x^2+y^2}$ is the radial coordinate, and the bottom is assumed to be flat ($b= 0$). The mesh consists of Voronoi cells with a characteristic mesh size $h=1/20$ near the cylinder, increasing regularly in diameter until $h=1/2$ at the domain boundaries (see Figure \ref{fig.Cylinder}). This choice results necessary in order to properly approximate the internal boundary without relying on an isoparametric description of the physical geometry, as proposed in \cite{TavelliSWE2014}. The mesh contains a total of $N_P=15516$ cells.

\begin{figure}[!htbp]
	\begin{center}
		\begin{tabular}{cc} 
			\includegraphics[width=0.47\textwidth]{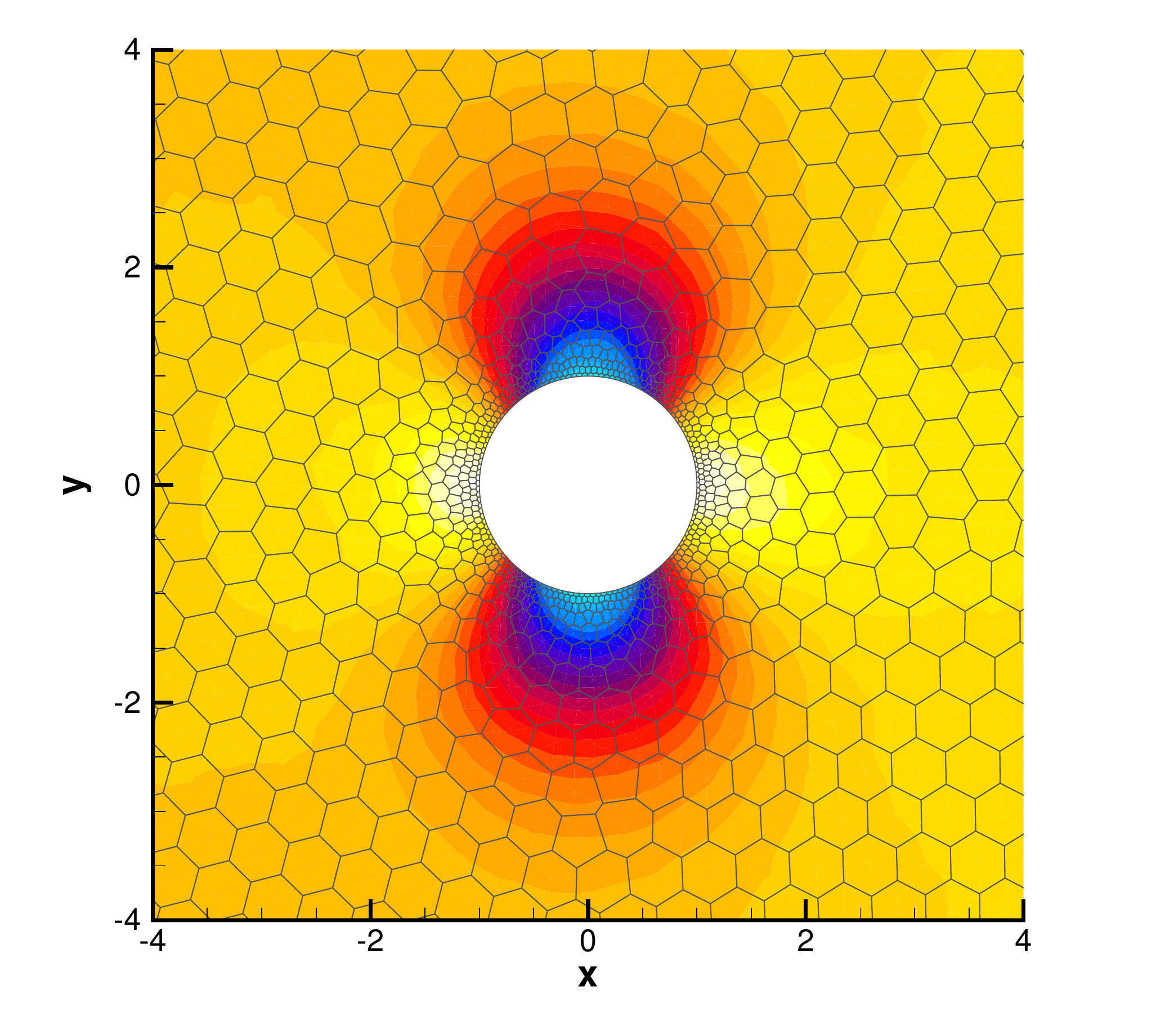} &  			
			\includegraphics[width=0.47\textwidth]{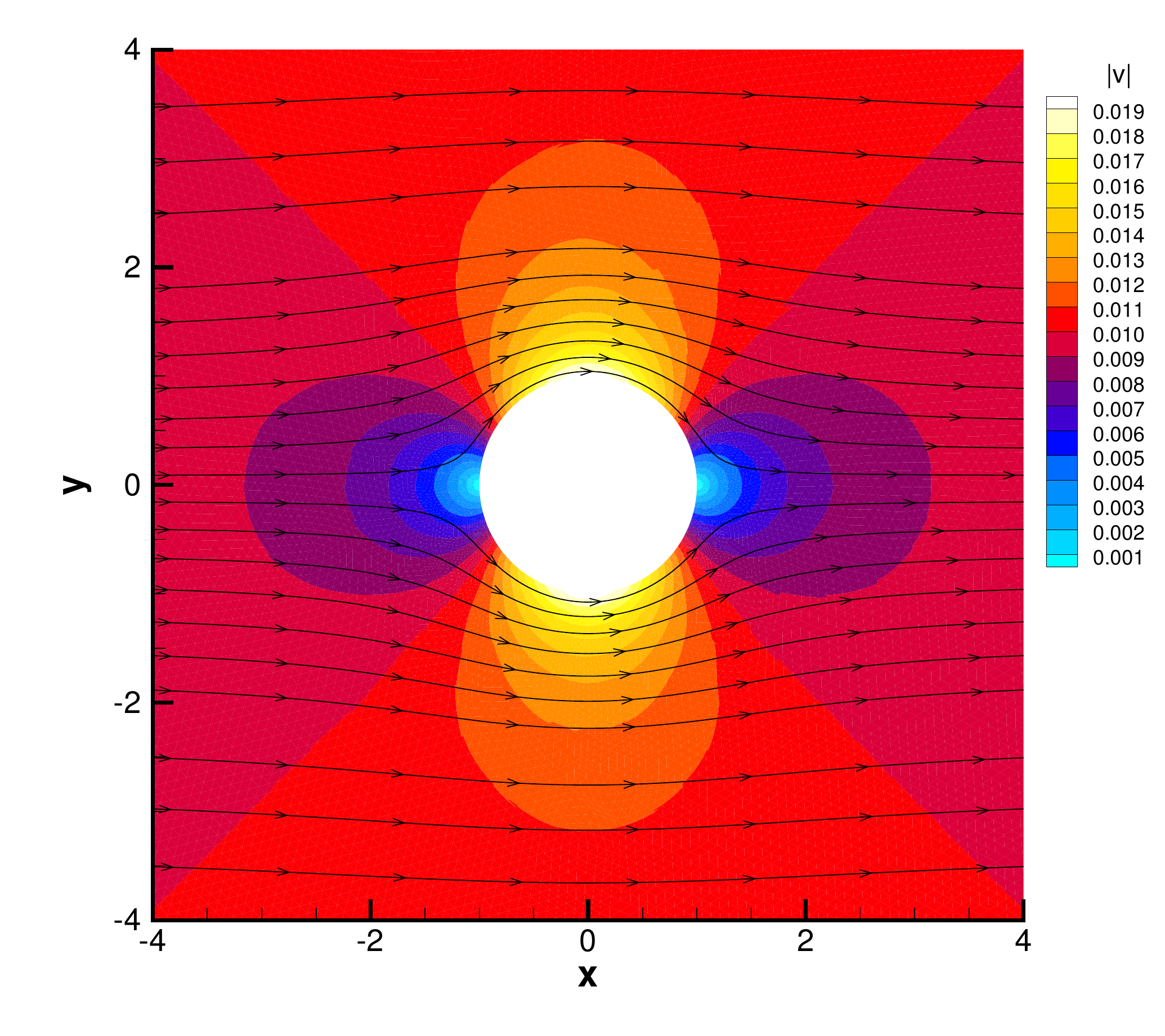} \\
			\includegraphics[width=0.47\textwidth]{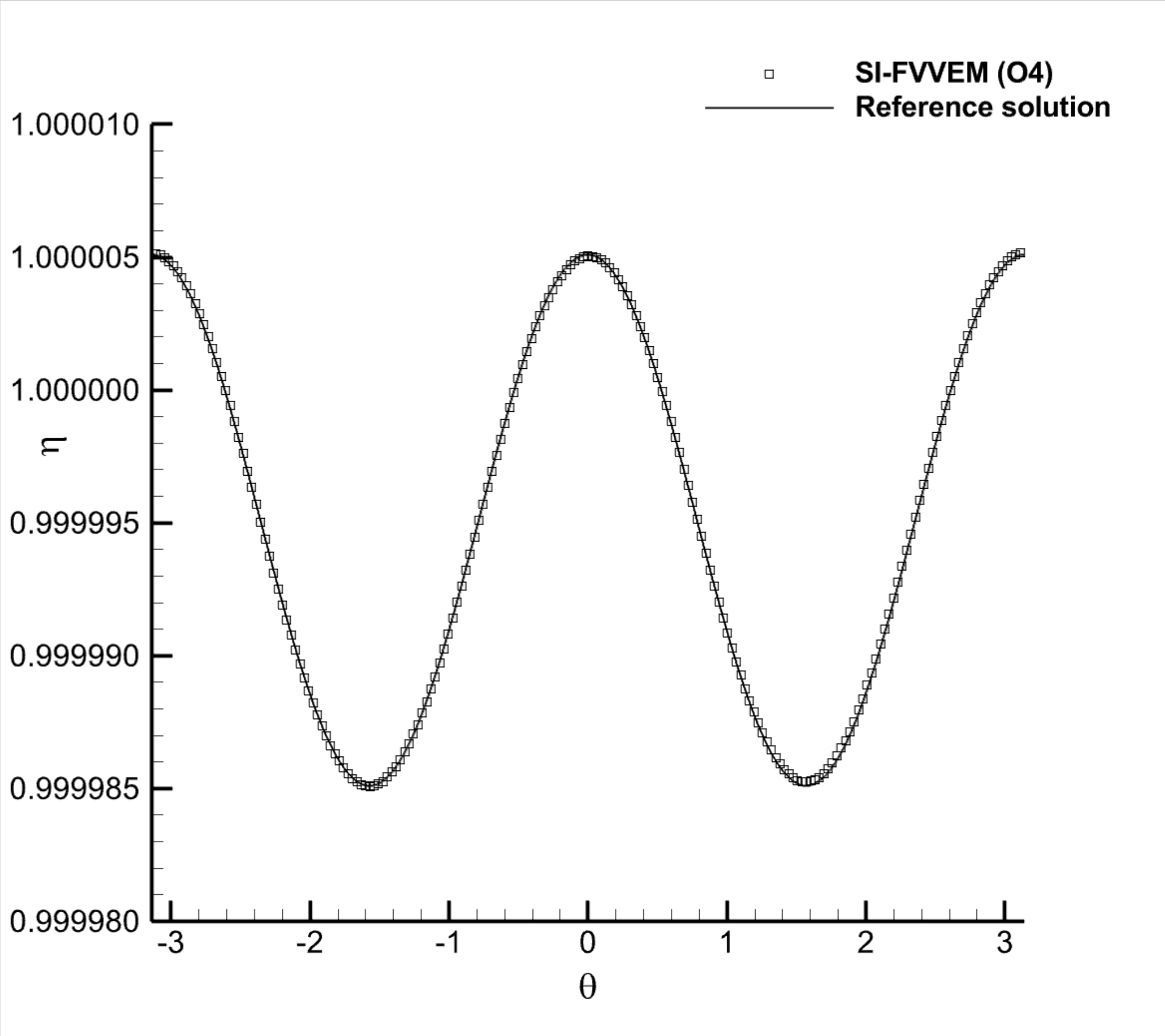} &  			
			\includegraphics[width=0.47\textwidth]{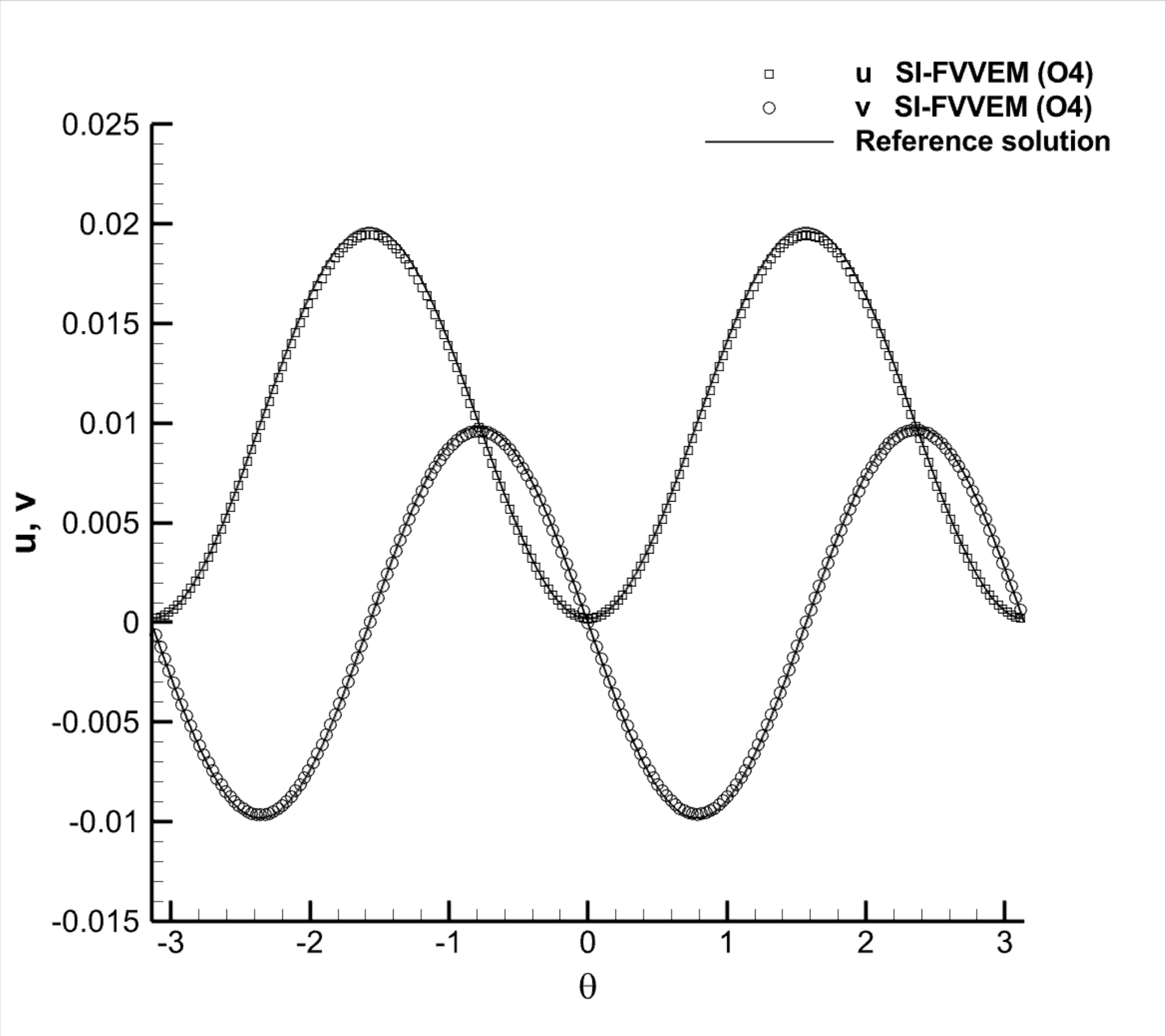} \\
		\end{tabular}
		\caption{Flow around a circular cylinder at time $t_f=10$. Top: zoom around the cylinder of the contour plot of the free surface elevation with the Voronoi computational mesh (left) and of the magnitude of the velocity field with stream-traces (right). Bottom: comparison of the numerical and exact solutions for the free surface (left) and velocity components (right) at radius $r = 1.01$.}
		\label{fig.Cylinder}
	\end{center}
\end{figure}
The exact velocity field and free surface elevation of the test problem can be expressed using polar coordinates $(r,\theta)$ as follows:
\begin{equation}
	\label{eqn.Cyl_ini}
	v_r = v_m \left( 1-\frac{r_c^2}{r^2} \right) \cos(\theta), \qquad v_{\theta} = -v_m \left( 1+\frac{r_c^2}{r^2} \right) \sin(\theta),
	\qquad \eta = \eta_0 + \frac{1}{2} v_m^2 g \left( \frac{2 r_c^2}{r^2} \cos(2\theta) -\frac{r_c^4}{r^2} \right),
\end{equation}
where $\eta_0=1$ and $v_m=10^{-2}$. The initial condition for the simulation is a flat free surface, $\eta(\mathbf{x},0)=\eta_0$, while the exact solution is imposed at all boundaries except for the rightmost side of the domain ($x=16$), where an outflow condition is set. The simulation is run until the final time $t_f=10$ to reach the stationary state. In order to exploit the high order of accuracy of the SI-FVVEM methods, we use the fourth order version in space, with a first order time discretization. The velocity field with associated streamlines at the final time are shown in Figure \ref{fig.Cylinder} (top). The comparison against the reference solution is presented in Figure \ref{fig.Cylinder} (bottom) along the circumference of radius $r = 1.01$ centred at the origin, where we observe that the fourth order in space SI-FVVEM method properly retrieves the exact profile of both the free surface elevation and velocity components.

%
\subsection{Stationary Poiseuille flow (INS)}
This first test case for the INS model concerns an incompressible viscous flow between two parallel plates. It is a particular solution of the Stokes system, i.e., the convective fluxes are neglected. The computational domain is the channel $\Omega = [x_L; x_R] \times [y_B; y_T] = [0;3] \times [0;1]$ discretized with $N_P=1949$ cells, and the stationary solution writes
\begin{equation} \label{eqn.poiseuille}
	\frac{\partial p}{\partial x} = \frac{p_R - p_L}{x_R-x_L}, \qquad 
	\vv = \left( \begin{array}{c}
		u \\ v
	\end{array} \right)
		 = \left( \begin{array}{c}
		\frac{(p_R - p_L)}{2 L \nu} y (y - y_T) \\
		0
	\end{array} \right), \qquad 
	\nu=10^{-2},
\end{equation}
where $p_L=-1$ and $p_R=-5.8$ are the constant pressures applied along $x_L$ and $x_R$, respectively. Wall boundary conditions are imposed in $y-$direction, while the Dirichlet boundaries are set along the $x-$direction. The initial condition is given by the exact solution in \eqref{eqn.poiseuille}. Table \ref{tab:pois} collects the errors in $L_2$ and $L_\infty$-norms between the recovered and exact horizontal velocity $u$ with respect to the polynomial space dimension $M$ at final time $t_f = 5$. As expected, when $M \geq 2$, i.e. the polynomial space contains parabolic functions, the numerical errors are up to machine accuracy. 

\begin{table}[!htbp]  
	\caption{Errors in $L_2$ and $L_\infty$ norms for the horizontal velocity $u$ for the stationary Poiseuille flow. The errors refer to the polynomial space dimension $M$ at final time $t_f = 5$.}  
	\begin{center} 
		\begin{small}
			\renewcommand{\arraystretch}{1.0}
			\begin{tabular}{lll}
				\hline
				$M$ & $L_2(u)$   & $L_\infty(u)$ \\ \hline
				1   & 3.0966E-02 & 8.5265E-2     \\
				2   & 4.1826E-13 & 1.6733E-13    \\
				3   & 4.2756E-13 & 3.0695E-13   \\ \hline
			\end{tabular}
		\end{small}
	\end{center}
	\label{tab:pois}
\end{table}

%
\subsection{2D Taylor-Green vortex (INS)}
The Taylor-Green vortex describes the flow of an incompressible viscous fluid in a 2D square domain $\Omega = [0;2\pi]^2$ with periodic boundary conditions. The exact solution for this problem is
\begin{equation} \label{eqn.TGV}
	p = -\frac{e^{-4 \nu t}}{4} \left( \cos( 2x ) + \cos( 2y ) \right) , \qquad 
	\vv = \left( \begin{array}{c}
		u \\ v
	\end{array} \right) = \left( \begin{array}{c}
		\phantom{-}\sin( x ) \cos(y)  \\
		- \cos( x ) \sin( y) 
	\end{array} \right) e^{-2 \nu t}, \qquad 
	\nu=10^{-2}.
\end{equation} 
The initial condition is defined by the exact solution \eqref{eqn.TGV} at time $t = 0$. A second order SI-FVVEM method is run on a mesh composed of $N_P=5705$ polygonal cells with characteristic size $h = 1/20$. Figure \ref{fig.TGV_mu1e-2} depicts the numerical solution obtained at the final time of the simulation, $t_f = 0.2$. The velocity magnitude and the pressure distribution with the stream-traces are shown and a comparison of 1D cuts along the $x-$ and the $y-$axis of the numerical solution against the exact one is proposed for both pressure and velocity, retrieving an excellent matching. 
%

\begin{figure}[!htbp]
	\begin{center}
		\begin{tabular}{cc} 
			\includegraphics[width=0.47\textwidth]{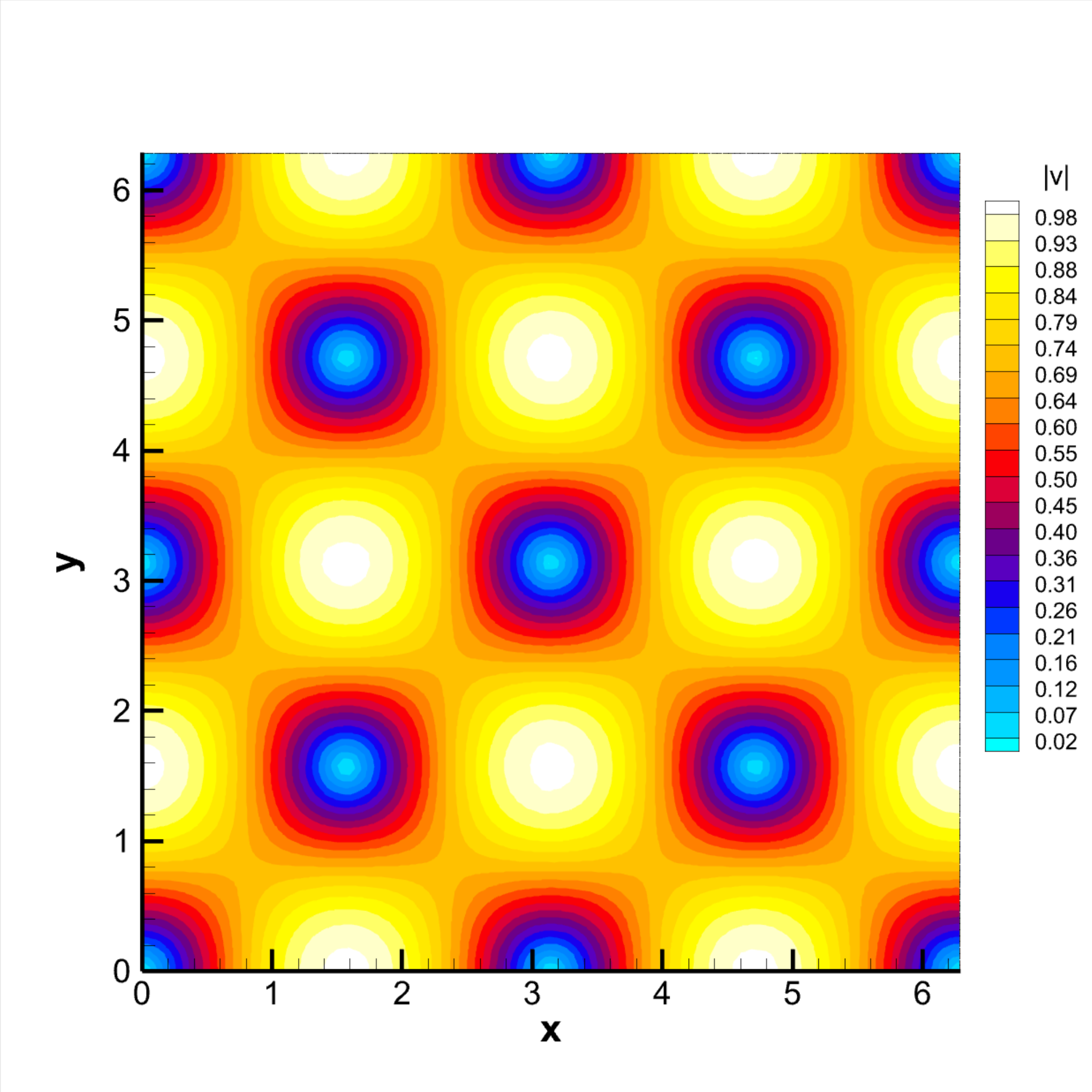} &  			
			\includegraphics[width=0.47\textwidth]{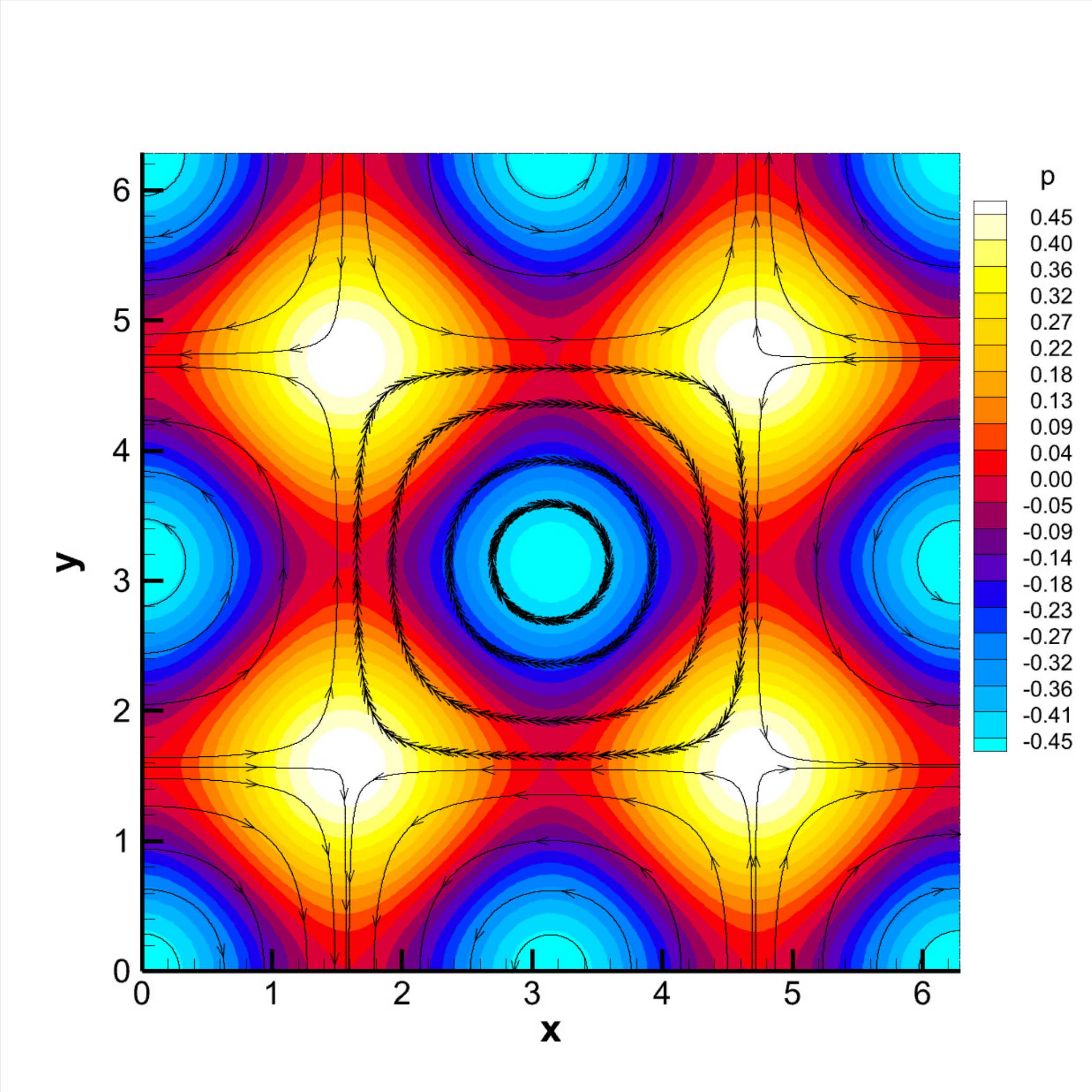} \\
			\includegraphics[width=0.47\textwidth]{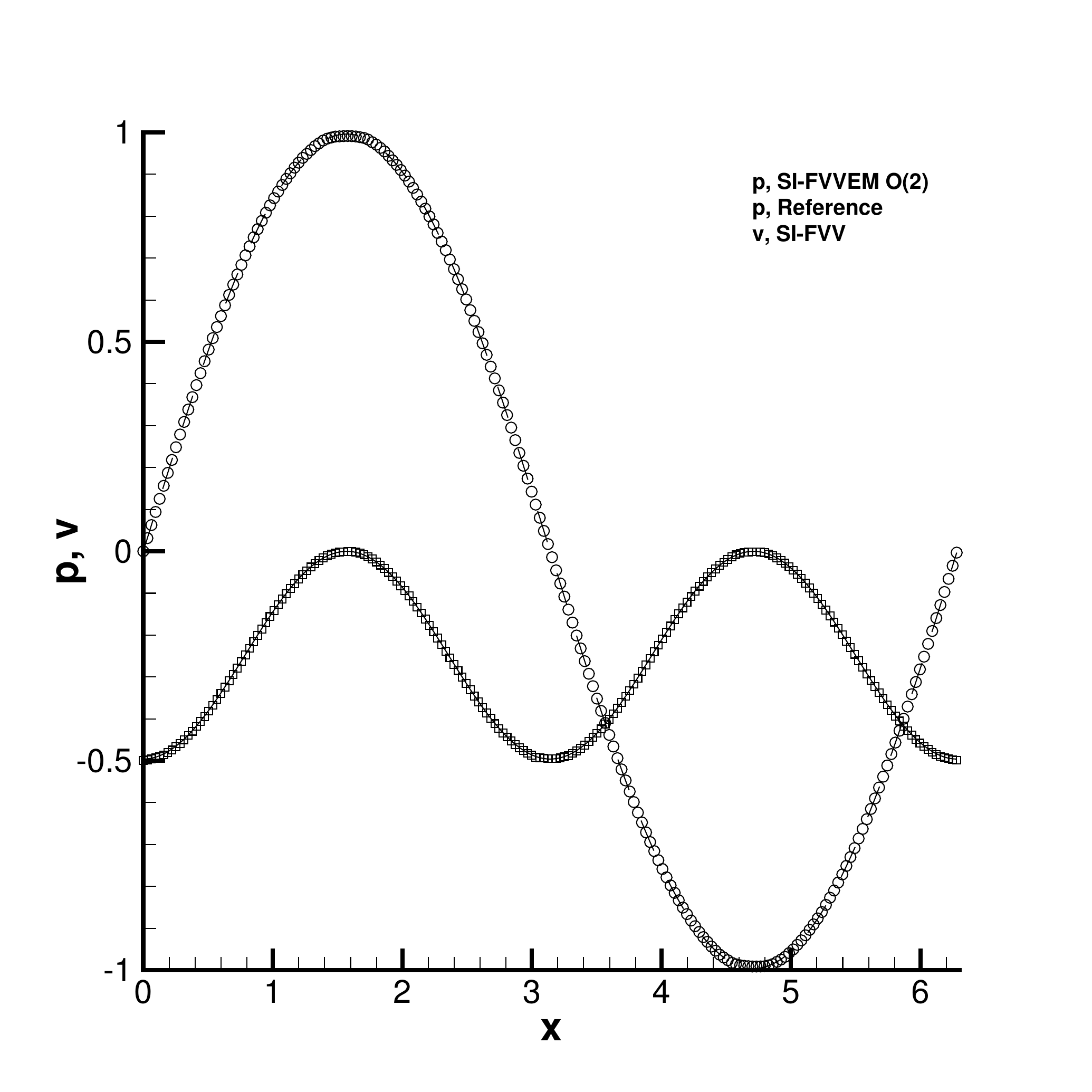} &  			
			\includegraphics[width=0.47\textwidth]{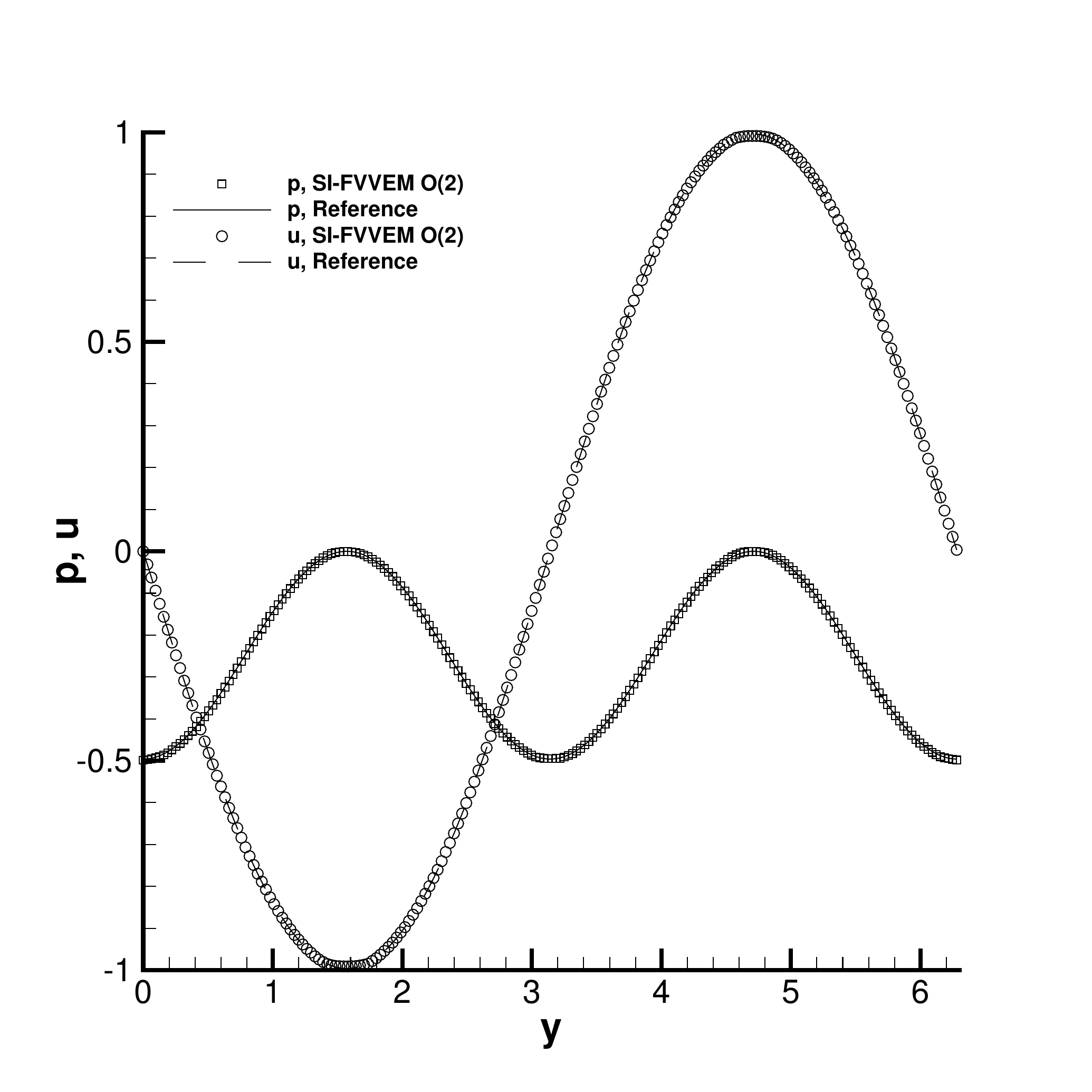} \\
		\end{tabular}
		\caption{Taylor-Green vortex at time $t = 0.2$ with viscosity $\nu = 10^{-2}$. Top: velocity magnitude (left) and pressure with the stream-traces (right). Bottom: 1D cuts with 200 equidistant points along the $x-$axis (left) and the $y-$axis (right) of the numerical solution against the exact solution.}
		\label{fig.TGV_mu1e-2}
	\end{center}
\end{figure}

This test case is also used to asses the convergence property of the proposed SI-FVVEM scheme. We consider a sequence of successively refined meshes, and we compute the errors in $L_2$ norm with respect to the analytical solution \eqref{eqn.TGV}. The convergence rates are reported in Table \ref{tab.conv_rate_tgv} for different values of the Reynolds number, in order to evaluate the asymptotic behaviour of the scheme. As demonstrated by Theorem \ref{th_ins_2}, the schemes satisfy the AP property and the errors are independent of the stiffness parameter $\Rey$, achieving the formal second order of accuracy in space and time.

\begin{table}[!htp]
	\begin{center}
		\caption{Numerical convergence results of the SI-FVVEM scheme with second order of accuracy in space and time using the Taylor-Green vortex on general polygonal meshes. The errors are measured in $L_2$ norm and refer to the pressure $p$ and velocity component $u$ at time $t_f=0.2$. The asymptotic preserving (AP) property of the scheme is studied by considering different Reynolds numbers $\Rey=\{ 10^{2}, 10^{3}, 10^{4}, 10^{5} \}$.}
		\begin{small}
			\renewcommand{\arraystretch}{1.1}	
			\begin{tabular}{ccccc}
				$h(\Omega)$ & ${L_2}(p)$ & $\mathcal{O}(p)$ & ${L_2}(u)$ & $\mathcal{O}(u)$ \\
				\hline
				& \multicolumn{4}{c}{$\Rey=10^2$} \\
				\hline
				3.1483E-01 & 1.8321E-02 & -    & 4.6596E-02 & -    \\
				1.7729E-01 & 6.2191E-03 & 1.88 & 9.2174E-03 & 2.82 \\
				1.2069E-01 & 1.7224E-03 & 3.34 & 3.7950E-03 & 2.31 \\
				8.9945E-02 & 9.2102E-04 & 2.13 & 2.1919E-03 & 1.87 \\
				\hline
				& \multicolumn{4}{c}{$\Rey=10^3$} \\
				\hline
				3.1483E-01 & 1.8501E-02 & -    & 4.6894E-02 & -    \\
				1.7729E-01 & 6.0593E-03 & 1.94 & 9.3017E-03 & 2.82 \\
				1.2069E-01 & 1.7371E-03 & 3.25 & 3.8958E-03 & 2.26 \\
				8.9945E-02 & 9.2967E-04 & 2.13 & 2.2858E-03 & 1.81 \\ 
				\hline
				& \multicolumn{4}{c}{$\Rey=10^4$} \\
				\hline
				3.1483E-01 & 1.8520E-02 & -    & 4.6924E-02 & -    \\
				1.7729E-01 & 6.0464E-03 & 1.95 & 9.3105E-03 & 2.82 \\
				1.2069E-01 & 1.7387E-03 & 3.24 & 3.9069E-03 & 2.26 \\
				8.9945E-02 & 9.3060E-04 & 2.13 & 2.2964E-03 & 1.81 \\ 
				\hline
				& \multicolumn{4}{c}{$\Rey=10^5$} \\
				\hline
				3.1483E-01 & 1.8522E-02 & -    & 4.6927E-02 & -    \\
				1.7729E-01 & 6.0451E-03 & 1.95 & 9.3114E-03 & 2.82 \\
				1.2069E-01 & 1.7389E-03 & 3.24 & 3.9080E-03 & 2.26 \\
				8.9945E-02 & 9.3069E-04 & 2.13 & 2.2975E-03 & 1.81 \\
				\hline
			\end{tabular}
		\end{small}
		\label{tab.conv_rate_tgv}
	\end{center}
\end{table}

%
\subsection{First problem of Stokes (INS)}
In this test case, we consider a problem only dominated by viscous effects, known as the first problem of Stokes, for which an exact solution of the unsteady Navier-Stokes equations has been derived \cite{Schlichting}. An infinite incompressible shear layer is modelled starting from the following velocity field:
\begin{equation}\label{eqn.stokes1ex}
	u = 0, \qquad v = v_0 \, \erf \left( \frac{1}{2} \frac{x}{\sqrt{\nu t}} \right), \qquad v_0 = 0.1.
\end{equation}
The initial condition then writes
\begin{equation} \label{eqn.stokes1ic}
	p = 1, \qquad u = 0, \qquad v = \begin{cases}
		-v_0, & x \leq 0 \\
		\phantom{-}v_0, & x > 0
	\end{cases}.
\end{equation}
The computational domain is $\Omega=[-0.5;0.5]\times[-0.1;0.1]$ and it is paved with a grid of characteristic mesh size $h=1/100$ along the $x-$direction, while $h=1/10$ is used along the $y-$direction. At the boundaries, the initial condition \eqref{eqn.stokes1ic} is imposed along the $y$-direction while periodic conditions close the problem on the remaining sides. The first problem of Stokes is solved using three values of viscosity, namely $\nu = 10^{-3}$, $\nu = 10^{-4}$ and $\nu = 10^{-5}$. Figure \ref{fig.stokes1} depicts the recovered solutions for the vertical velocity $v$ at $y = 0$ over the exact solution at the final time $t_f = 1$. The figures show a perfect overlapping between numerical and analytical solutions. We remark that the time step is independent of the value of the viscosity coefficient, thanks to the implicit discretization of the viscous terms. Therefore, the three simulations ran for the same amount of computational time.

\begin{figure}[!htbp]
	\begin{center}
		\begin{tabular}{ccc} 
			\includegraphics[width=0.33\textwidth]{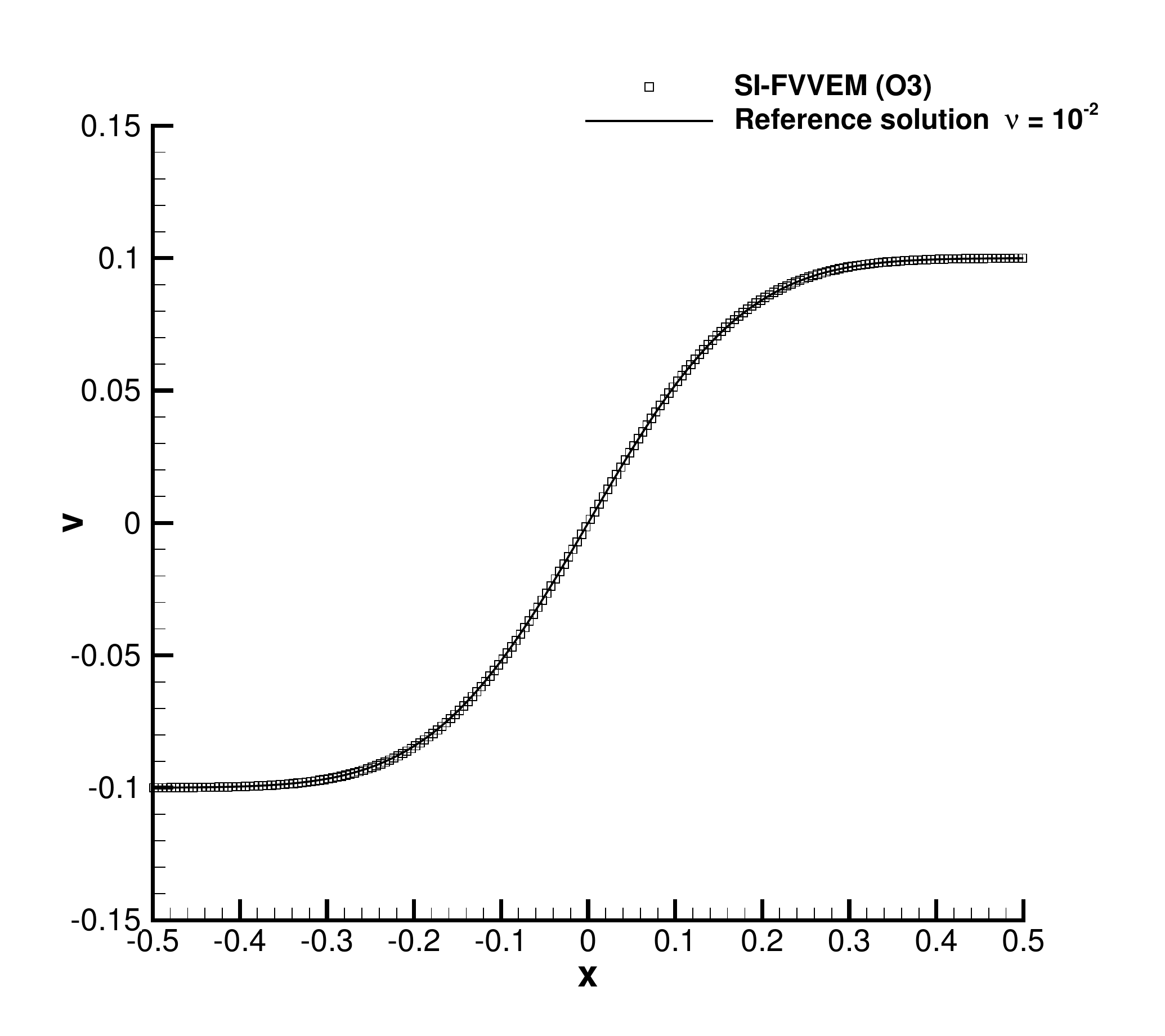} & 
			\includegraphics[width=0.33\textwidth]{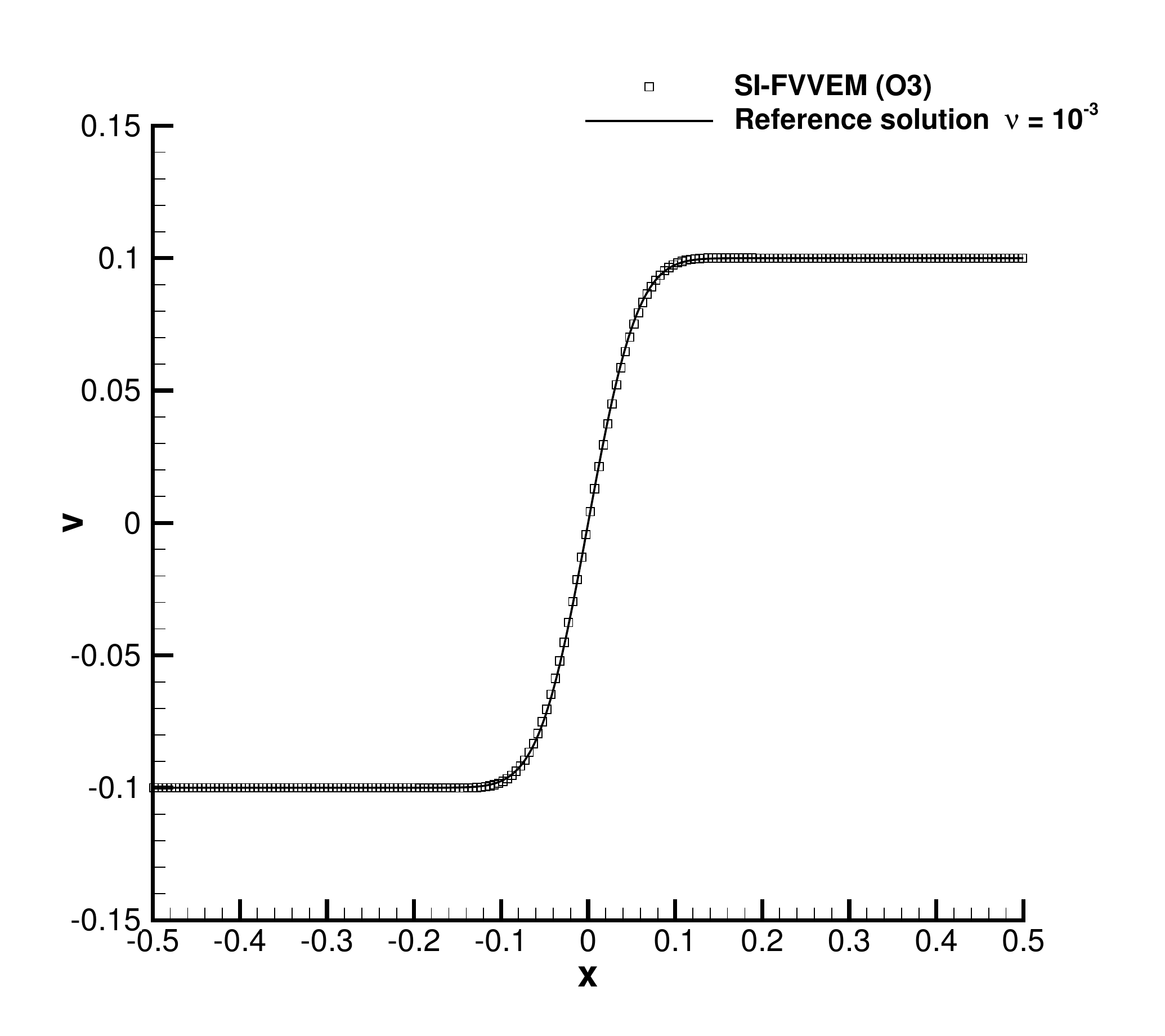} &  			\includegraphics[width=0.33\textwidth]{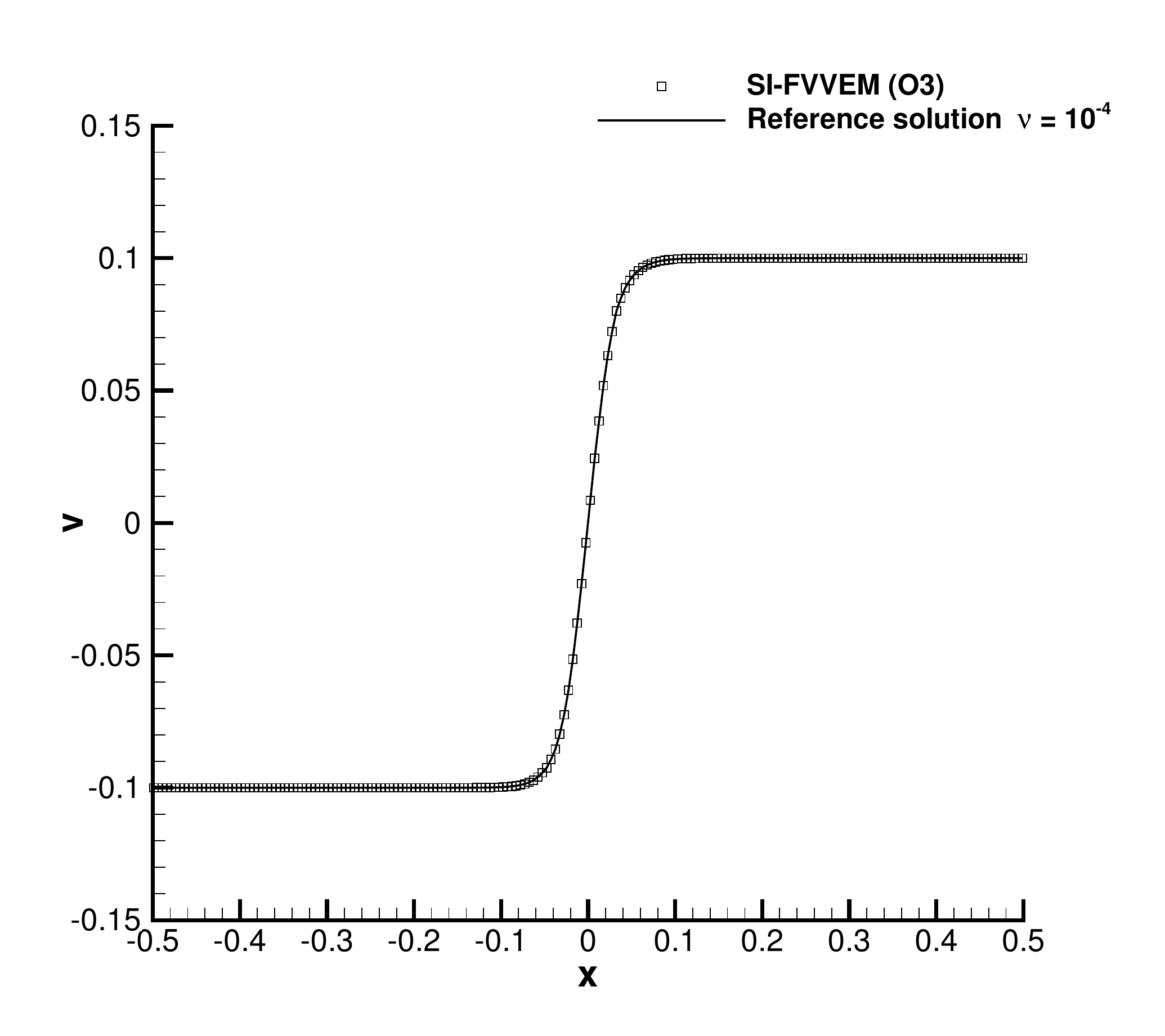} \\
		\end{tabular}
		\caption{First problem of Stokes at time $t_f=1$. Comparison against the analytical solution with viscosity $\nu=10^{-2}$ (left), $\nu=10^{-3}$ (middle) and $\nu=10^{-4}$ (right). The numerical solution is interpolated with 200 equidistant points at $y=0$.}
		\label{fig.stokes1}
	\end{center}
\end{figure}

%
\subsection{Oscillatory viscous flow between two flat plates (INS)}
Here, we consider a viscous incompressible flow between two flat plates. The solution solves the Stokes equations and was derived by Womersley in \cite{womersley1955method}. The computational domain is the square $\Omega=[x_L;x_R]^2=[-0.5;0.5]^2$ with wall boundaries in the $y-$direction and periodic sides elsewhere. The employed mesh has a total number of $N_P = 14368$ polygons with a characteristic size $h = 1/80$. The non-linear convective terms are neglected according to \cite{Loudon1998}, hence obtaining the analytical solution for the fluid velocity and pressure gradient as a function of time and the distance from the plate:
\begin{equation}
	u = \frac{A}{i \, \omega} \left[ 1- \frac{\cosh \left(\alpha_W \sqrt{i} \, y/R\right)}{\cosh \left(\alpha_W \sqrt{i}\right)} \right] \, e^{i \omega t},  \qquad
	\frac{\partial p}{\partial x} = \frac{p(x_R)-p(x_L)}{L} = A \, e^{i \omega t}.
	\label{eqn.Womersley_exact}
\end{equation}
Here, $\alpha_W=R\sqrt{\omega/\nu}$ is the Womersley number, with $R$ being the half distance between the two plates and $\omega=2\pi$ denoting the frequency of the oscillations. Moreover, the amplitude of the sinusoidal pressure gradient is set to $A=1$, the imaginary unit is addressed as usual with $i=\sqrt{-1}$, and $L=x_R-x_L$ is the total length of the computational domain along the $x-$direction. The pressure gradient is imposed at the aid of a source term that is discretized explicitly in time as already done in \cite{boscheri2021efficient}. The viscosity coefficient is $\nu=2 \cdot 10^{-2}$ and the final time of the simulation is $t_f = 1$. To properly follow the oscillatory dynamics, a time step $\Delta t = 0.01$ is imposed. Figure \ref{fig.womersley} shows a 3D view of the obtained numerical horizontal velocity $u$ for time instances $t \in \{ 0.2, 0.4, 0.6, 0.8 \}$. We also compare the numerical horizontal velocity profile along the cut at $x = 0$ against the exact solution \eqref{eqn.Womersley_exact}, achieving and excellent agreement for all output times. 

\begin{figure}[!htbp]
	\begin{center}
		\begin{tabular}{cc} 
			\includegraphics[width=0.47\textwidth]{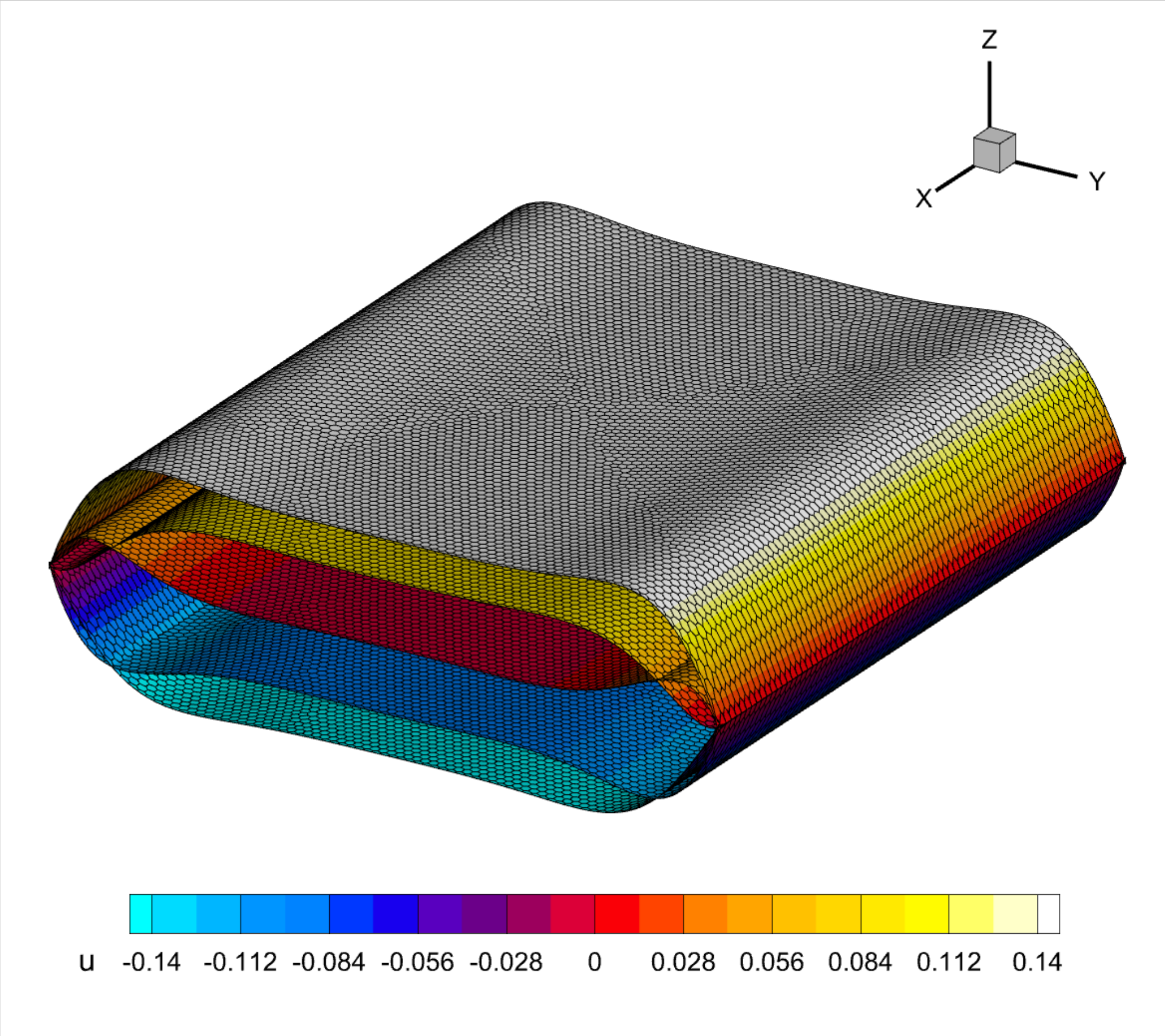} &  			\includegraphics[width=0.47\textwidth]{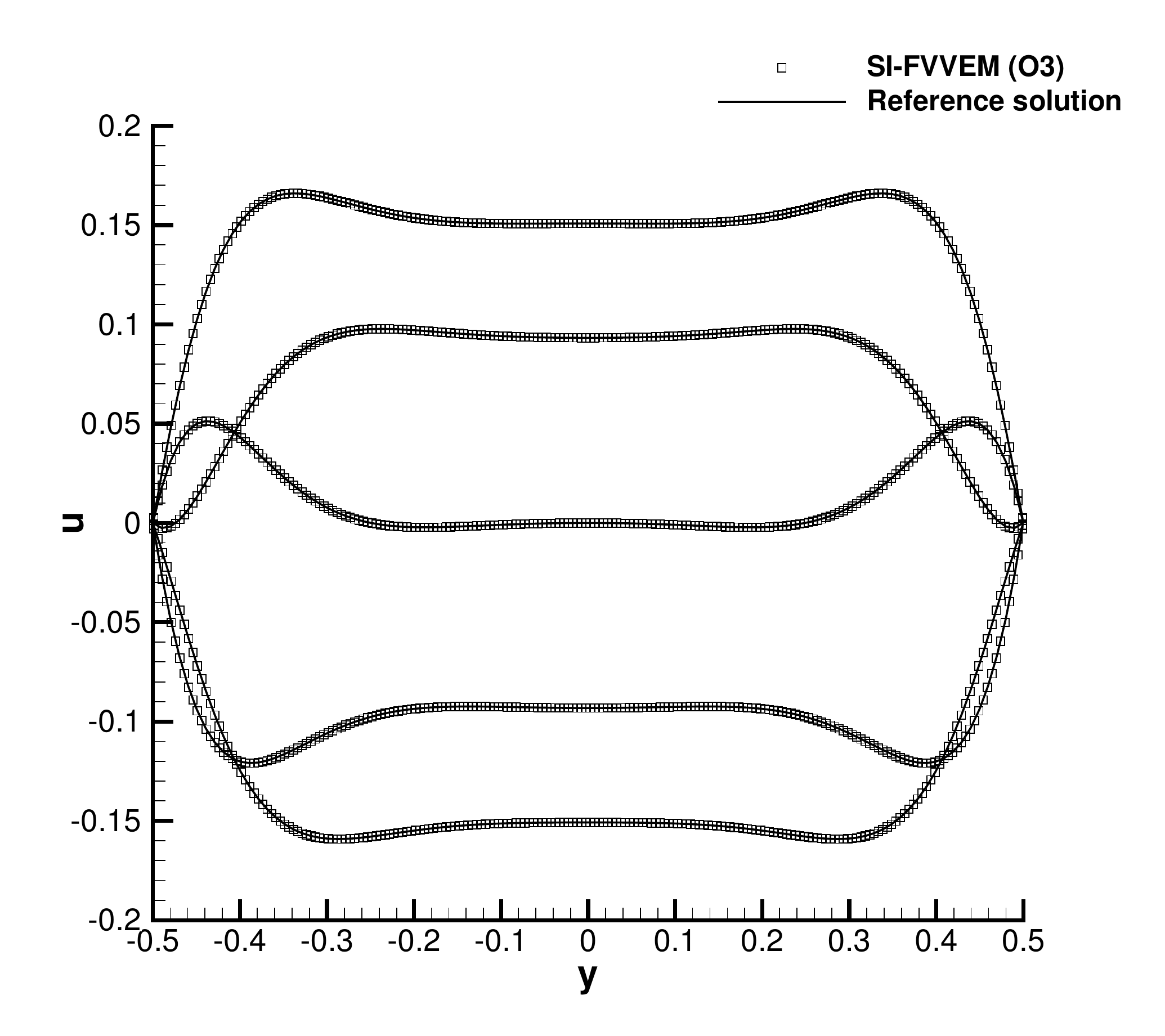} \\
		\end{tabular}
		\caption{Womersley flow with $\nu = 2 \cdot 10^{-2}$. Left: 3D view of the horizontal velocity $u$ over the entire computational domain at different output times. Right: numerical solution interpolated along 200 equidistant spatial points at $y=0$ compared with the exact solution at output times $t = 0.2$, $t = 0.4$, $t = 1.0$, $t = 0.6$, $t = 0.8$ (from the bottom to the top).}
		\label{fig.womersley}
	\end{center}
\end{figure}

%
\subsection{Double shear layer (INS)}

This test, originally introduced in \cite{bell1989second}, is concerned with the incompressible flow of a double shear layer solving the Navier-Stokes equations. The physical dynamics consists in a horizontal flow jet of a fluid given by a small vertical pressure gradient balancing the convective forces. As a consequence, several vortexes develop during the simulation. The computational domain is the square $\Omega = [0;1]^2$ with periodic boundaries. At the initial time the following pressure and velocity field are prescribed:
\begin{equation} \label{eqn.double_shear}
	p = 1, \qquad u = \begin{cases}
		\tanh( \theta (y - 1/4) ), & y \leq 1/2 \\
		\tanh( \theta (3/4 - y) ), & y > 1/2
	\end{cases}, \qquad v = \delta \sin( 2 \pi x ),	
\end{equation}
with parameters $\theta = 30$ and $\delta = 0.05$. A Reynolds number $\Rey=5000$ is chosen, and the final time of the simulation is fixed at $t_f = 1.8$. The computational grid counts $N_P=15684$ polygonal cells with $h=1/100$. Figure \ref{fig.DSL} depicts the vorticity magnitude at different time instances. Even in this case, numerical results are qualitatively in very good agreement with reference solutions available in the literature \cite{dumbser2016high, boscheri2021efficient, boscheri2021structure}. 

\begin{figure}[!htbp]
	\begin{center}
		\begin{tabular}{cc} 
			\includegraphics[width=0.47\textwidth]{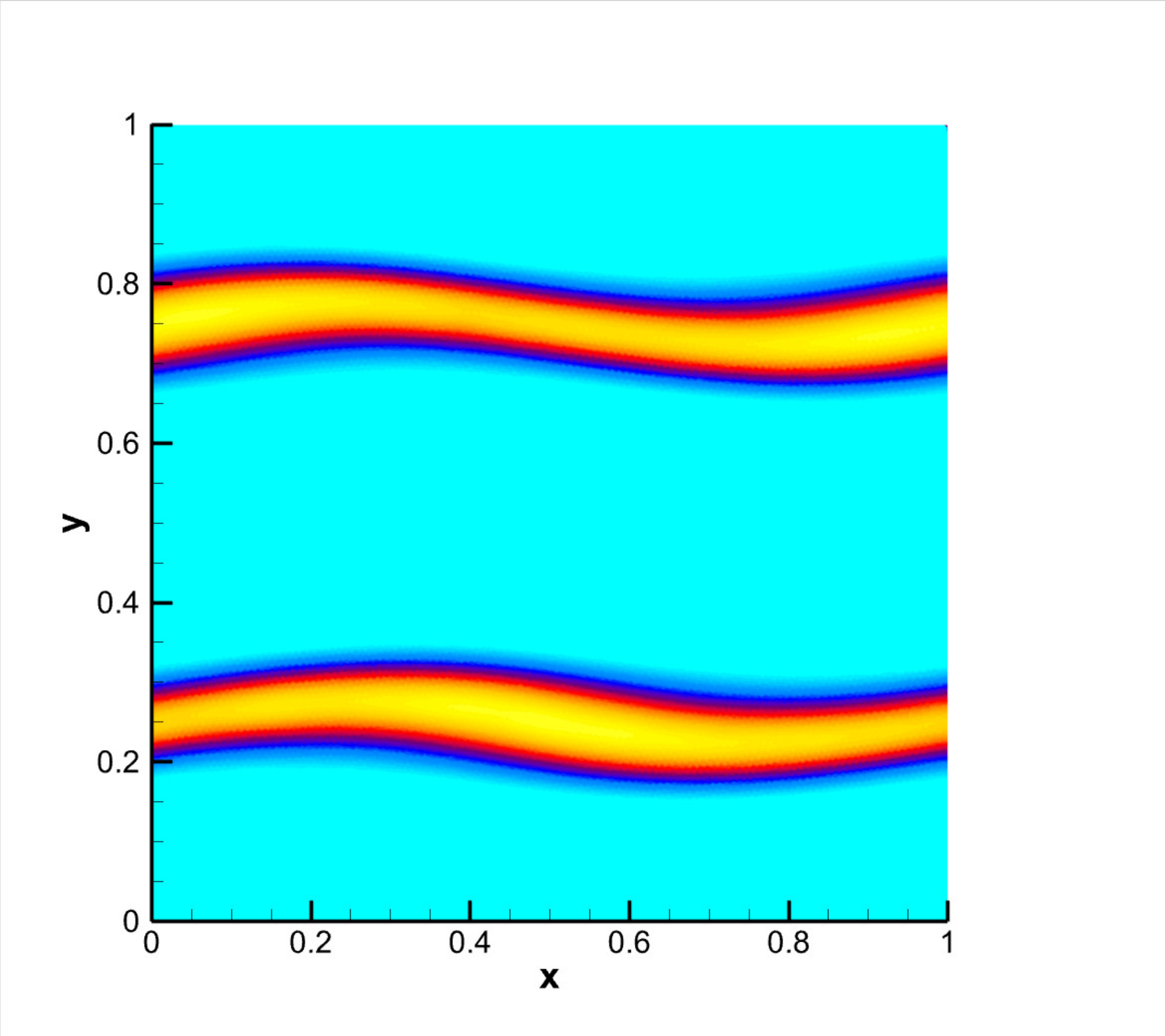} &  			\includegraphics[width=0.47\textwidth]{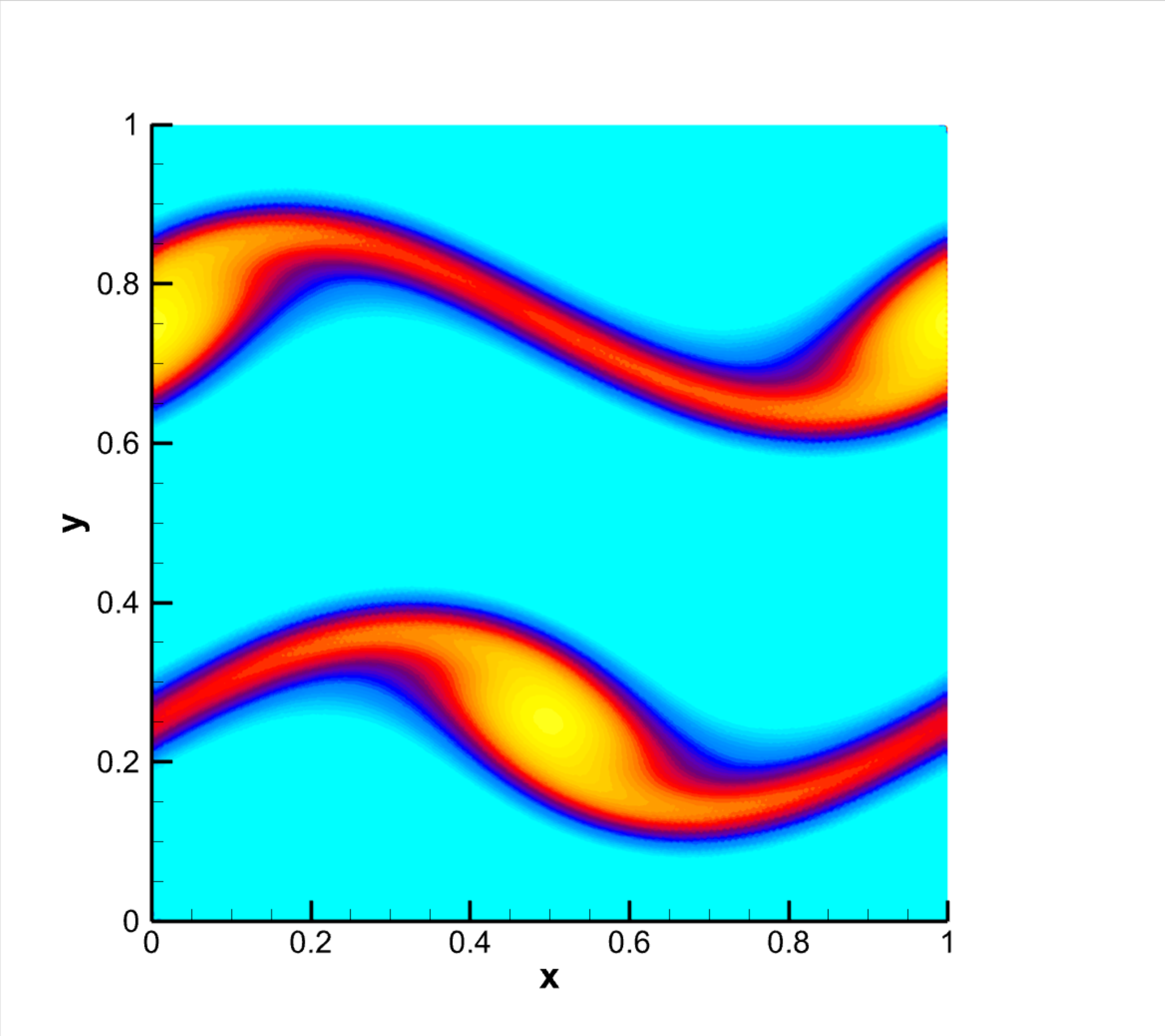} \\
			\includegraphics[width=0.47\textwidth]{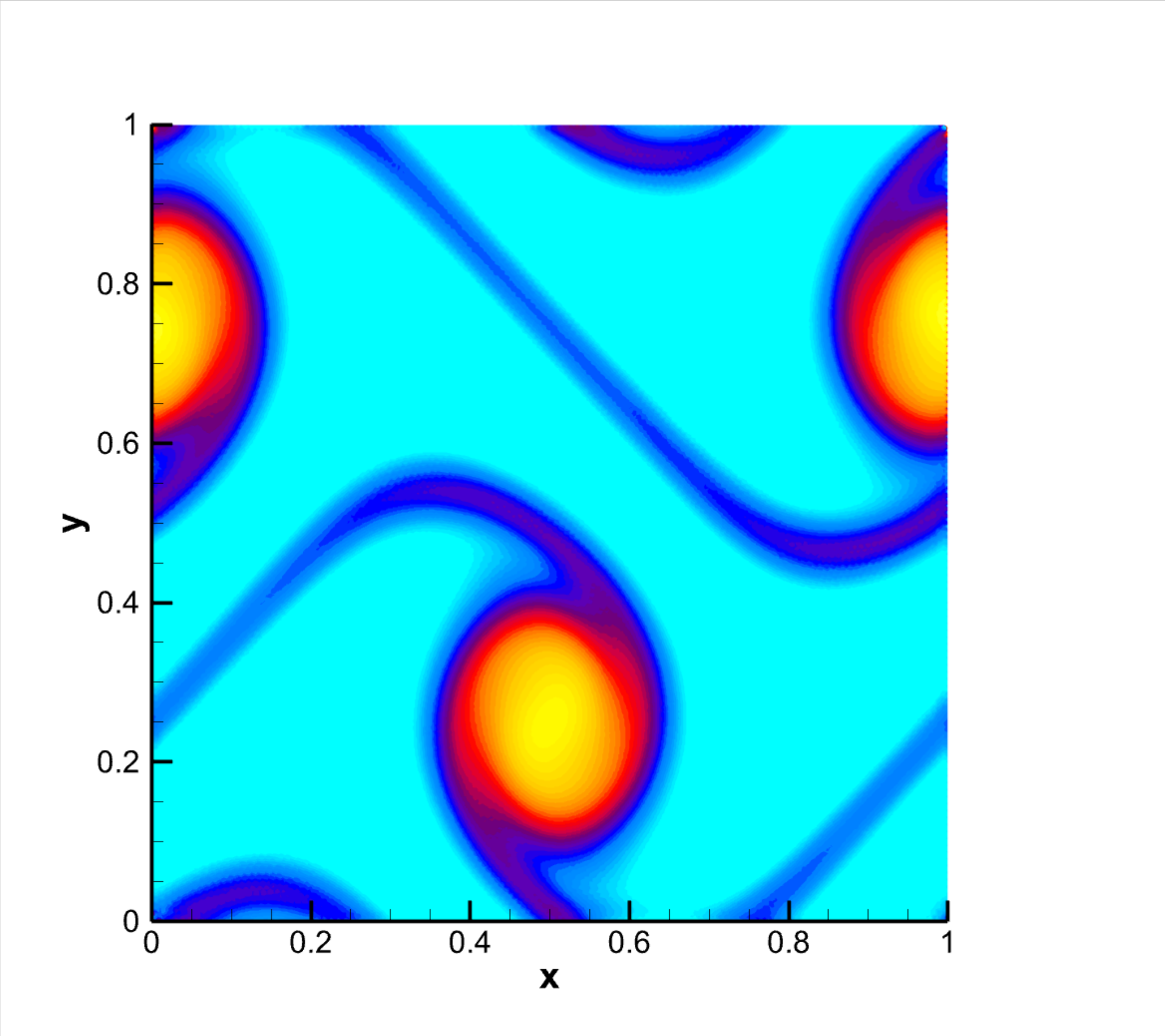} &  			\includegraphics[width=0.47\textwidth]{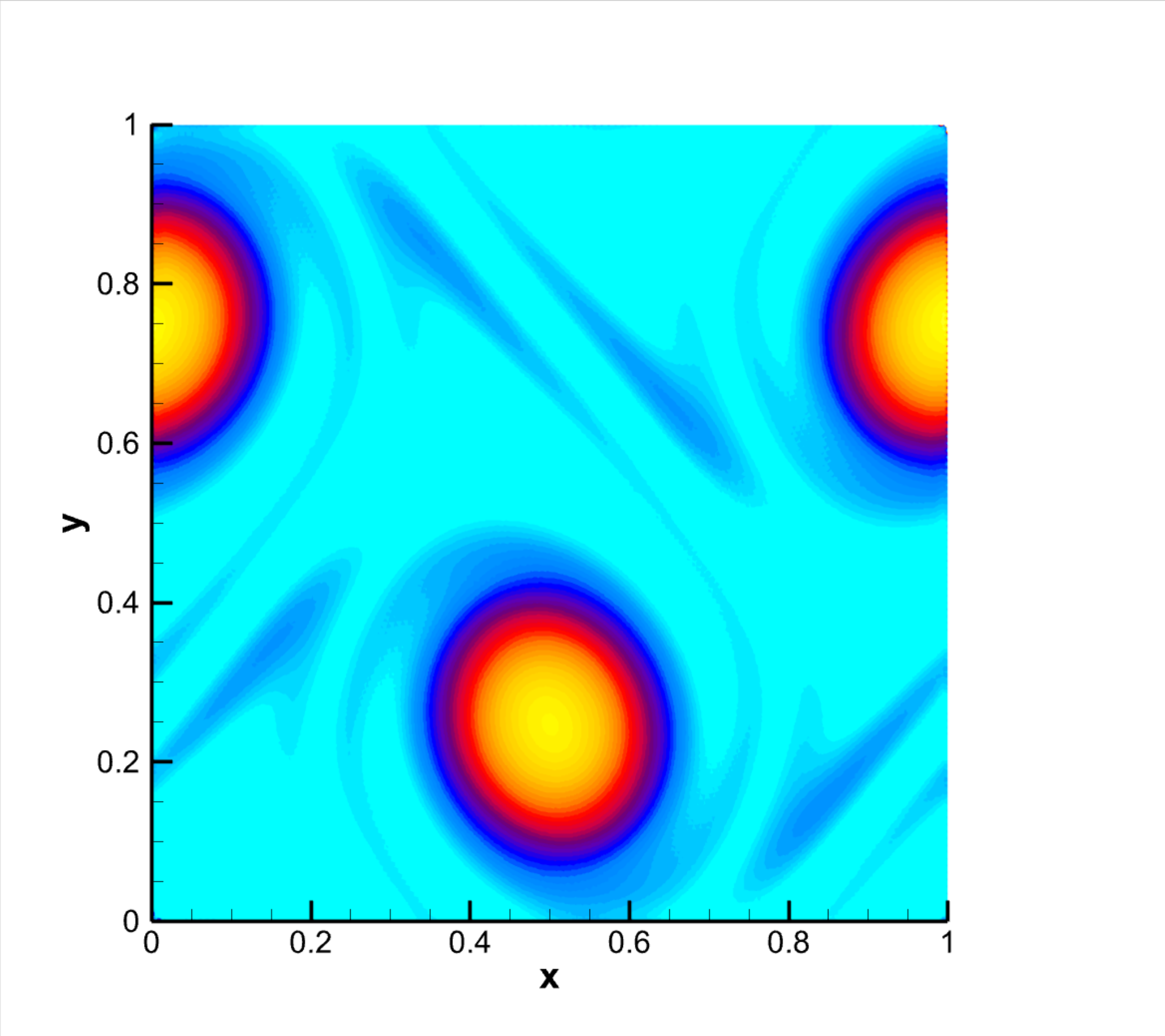} \\
		\end{tabular}
		\caption{Double shear layer. Vorticity magnitude at output times $t=0.4$ (top left), $t=0.8$ (top right), $t=1.2$ (bottom left) and $t=1.8$ (bottom right). The results are plotted with 41 contours in the interval $[0;26]$.}
		\label{fig.DSL}
	\end{center}
\end{figure}

%
\subsection{Lid-driven cavity flow (INS)}
We further test the novel SI-FVVEM scheme on the classical benchmark of the lid driven cavity flow in the computational domain $\Omega = [-0.5, 0.5]^2$. In this case, the target is to find the steady hydrodynamics state for a fluid initially at rest, i.e., $p = 0$ and $\vv = \mathbf{0}$. Wall boundary conditions are defined on the vertical sides ($x=\pm0.5$) and at the bottom ($y=-0.5$) of the domain, while on the top side ($y=0.5$) the velocity field $\vv = (1, 0)$ is imposed. The final time of the simulation is $t_f = 25$. Two different values of the Reynolds number are considered, namely $\Rey = 100$ and $\Rey=400$. All the simulations are run using the third order version of the scheme in space, while relying on a cheaper first order time discretization, because the goal is to catch the steady state. Figure \ref{fig.Cavity} shows the distribution of the horizontal velocity at the final time as well as the stream-traces of the velocity field, which permit to notice the generation of small vortical flows with a counter orientation with respect to the main vortex led by the cavity. We also show the numerical velocity components $u$ and $v$ along the cuts $x = 0$ and $y = 0$, respectively, comparing them with the results found in \cite{Ghia1982}. From the plots, it is possible to appreciate a good matching between our numerical results and data available in the literature for both Reynolds numbers.    

\begin{figure}[!htbp]
	\begin{center}
		\begin{tabular}{cc} 
			\includegraphics[width=0.47\textwidth]{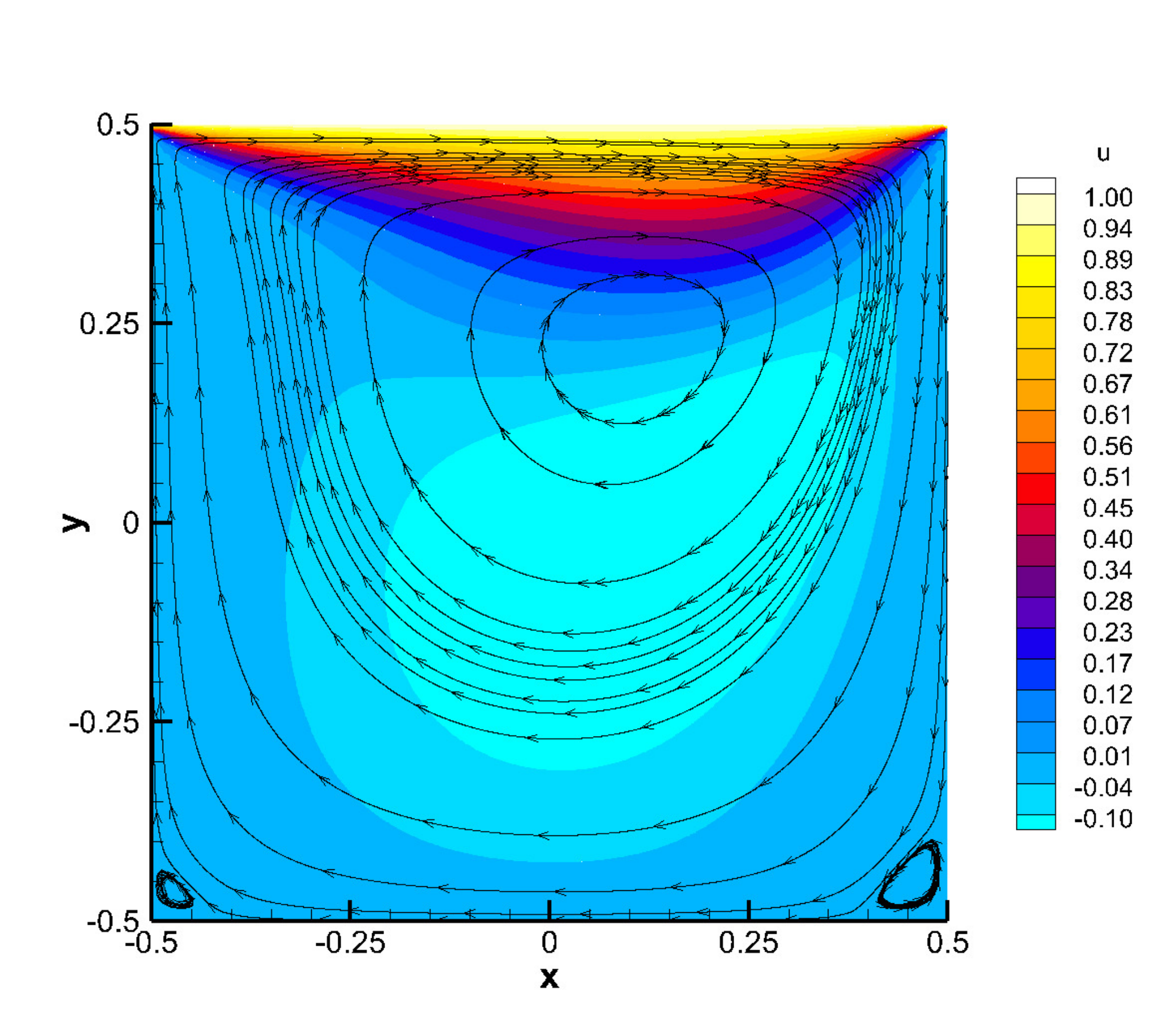} &  			\includegraphics[width=0.47\textwidth]{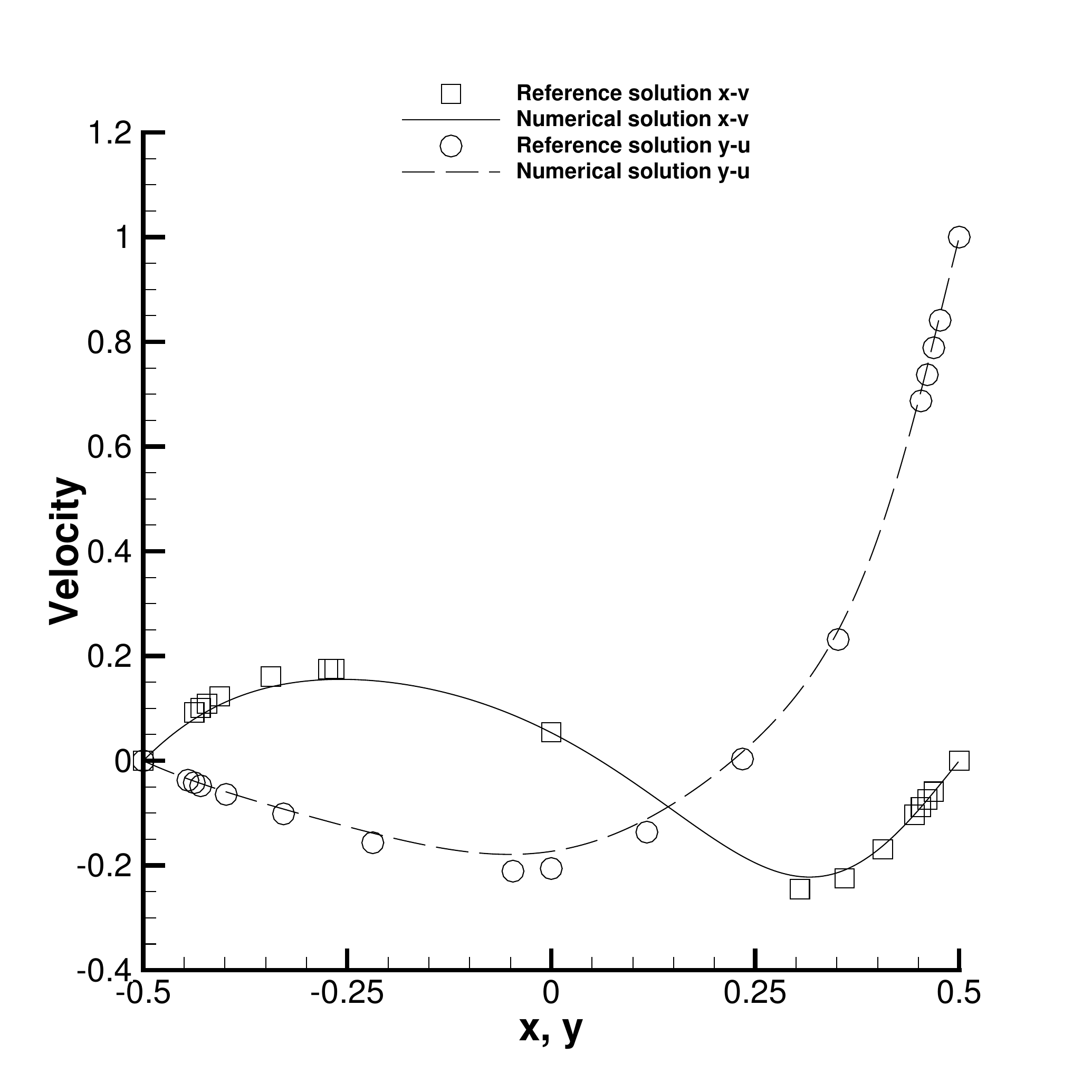} \\
			\includegraphics[width=0.47\textwidth]{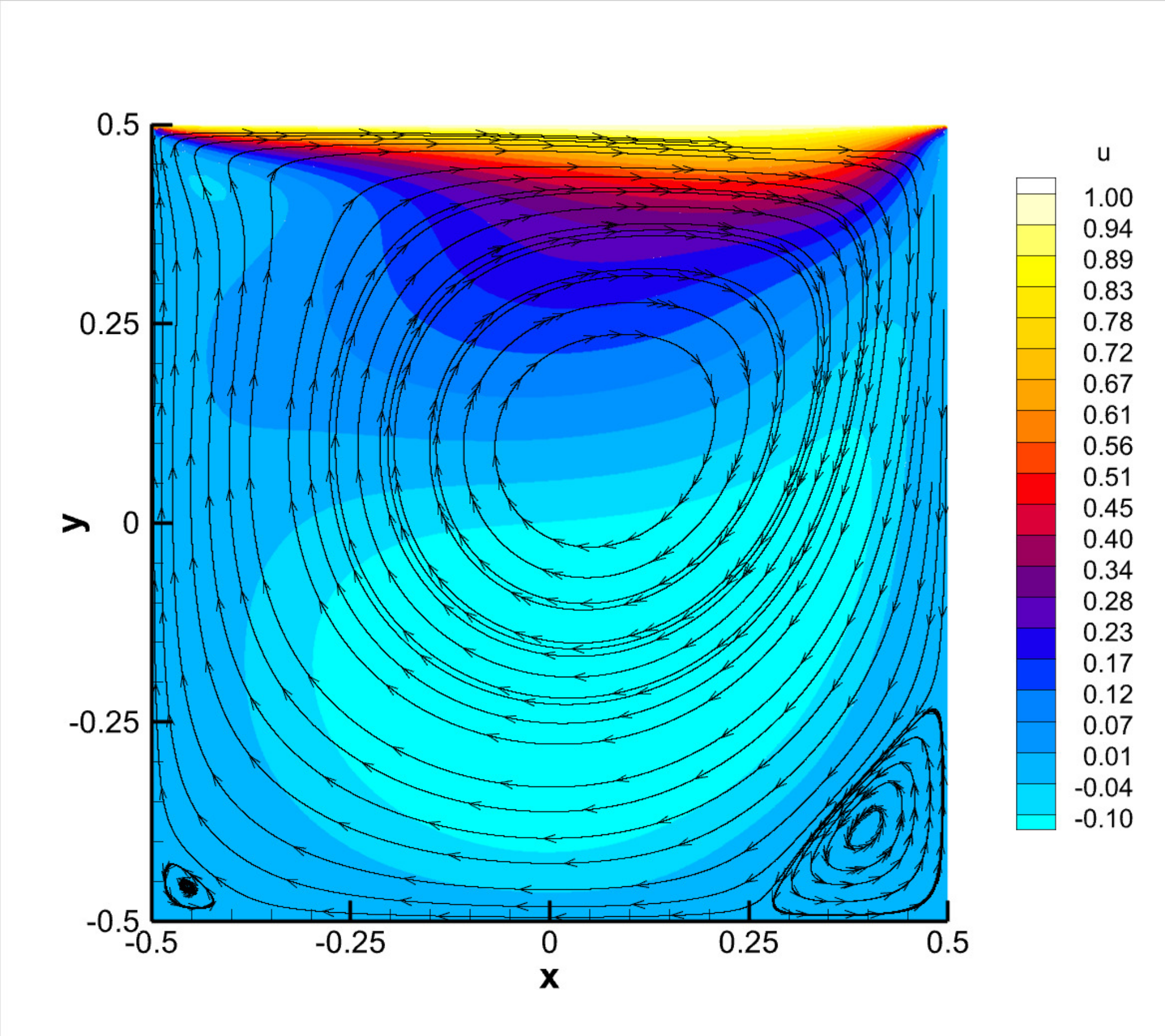} &  			\includegraphics[width=0.47\textwidth]{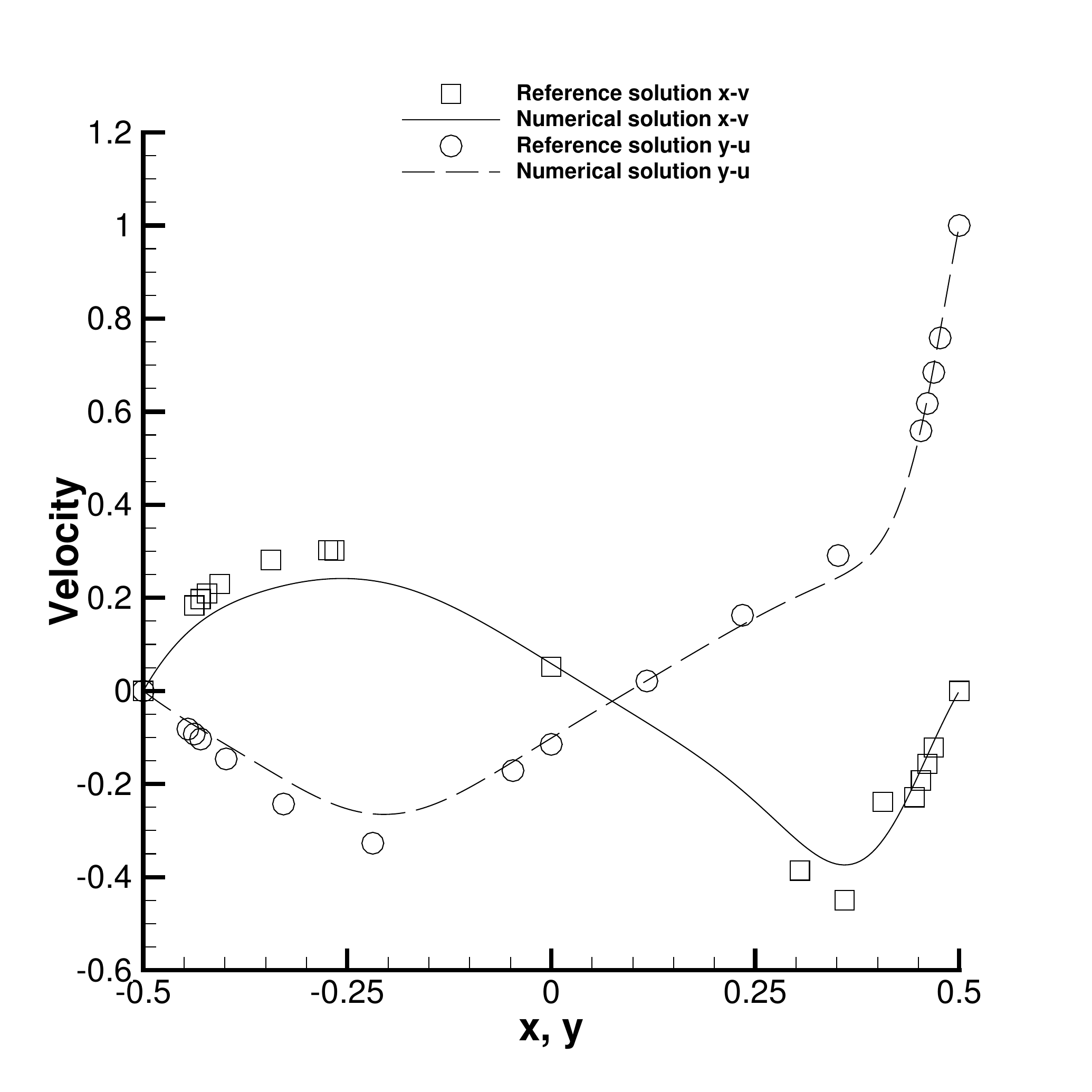} \\
		\end{tabular}
		\caption{Lid-driven cavity flow at time $t=25$ with a Reynolds number $\text{Re}=100$ (top row) and $\text{Re}=400$ (bottom row). Left: horizontal velocity contours and streamlines. Right: comparison with the reference solution of \cite{Ghia1982} for the velocity components $u$ and $v$ along the lines $y=0$ and $x=0$. }
		\label{fig.Cavity}
	\end{center}
\end{figure}

%
\subsection{Laminar flow over a cylinder (INS)}

We finally solve the problem related to a laminar flow over a circular cylinder \cite{TAVELLI2015235}, which generates a von Karman vortex that is shed behind the obstacle. The computational domain is defined as $\Omega=[-10;40]\times[-7;7]$, in which a cylinder of radius $r_c=1$ is located with centre at $\xx_c=(0,0)$. The computational mesh is made of $N_P=25390$ Voronoi cells with characteristic mesh size $h=0.25$. On the left side of the domain we prescribe an inflow velocity of $\vv=(0.5,0)$, while the remaining sides are assigned with transmissive boundary conditions. On the obstacle, no-slip conditions are used. A constant pressure $p=1$ is imposed at the initial time, and the flow is characterized by a Reynolds number of $\Rey=100$. A plot of the resulting stream-traces is reported in Figure \ref{fig.CylinderINS_stream} at different output times, showing the generation of vortical flow patterns past the cylinder. Figure \ref{fig.CylinderINS} depicts the vorticity and the horizontal velocity distribution at the final time $t_f=300$, clearly identifying two main vortexes departing from the obstacle that further generates the Von Karman street.

\begin{figure}[!htbp]
	\begin{center}
		\begin{tabular}{cc} 
			\includegraphics[width=0.47\textwidth]{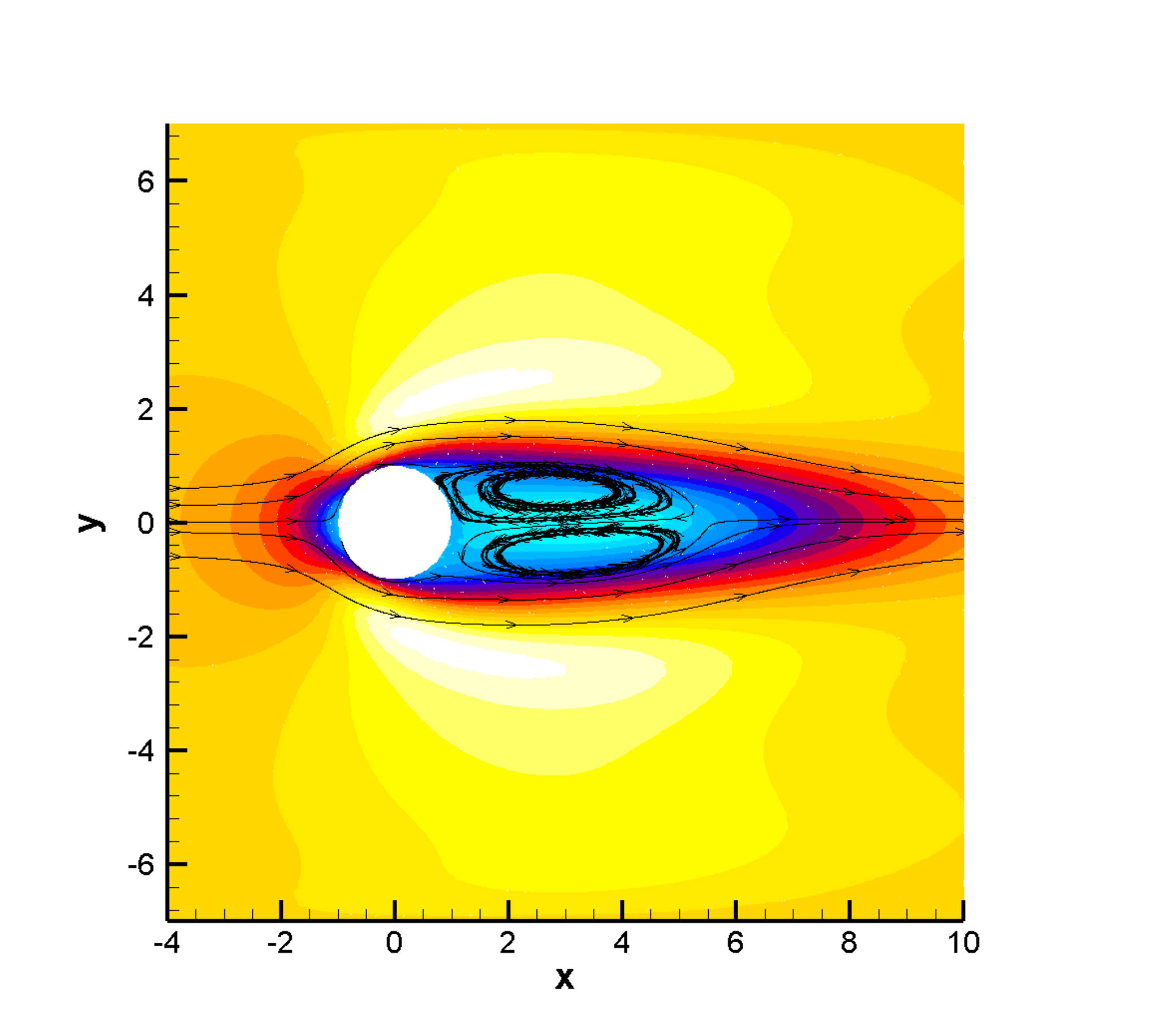} &
			\includegraphics[width=0.47\textwidth]{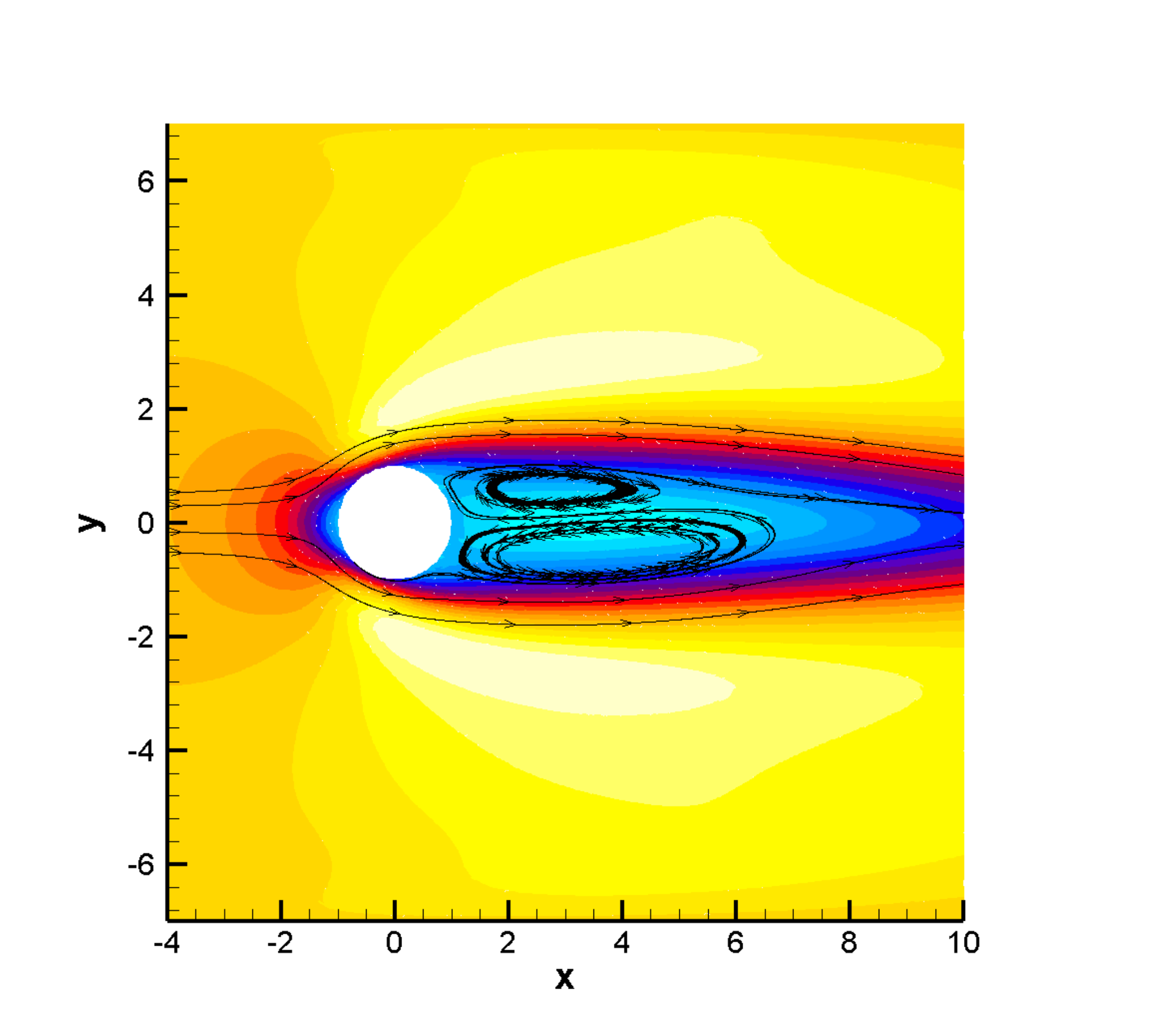} \\
			\includegraphics[width=0.47\textwidth]{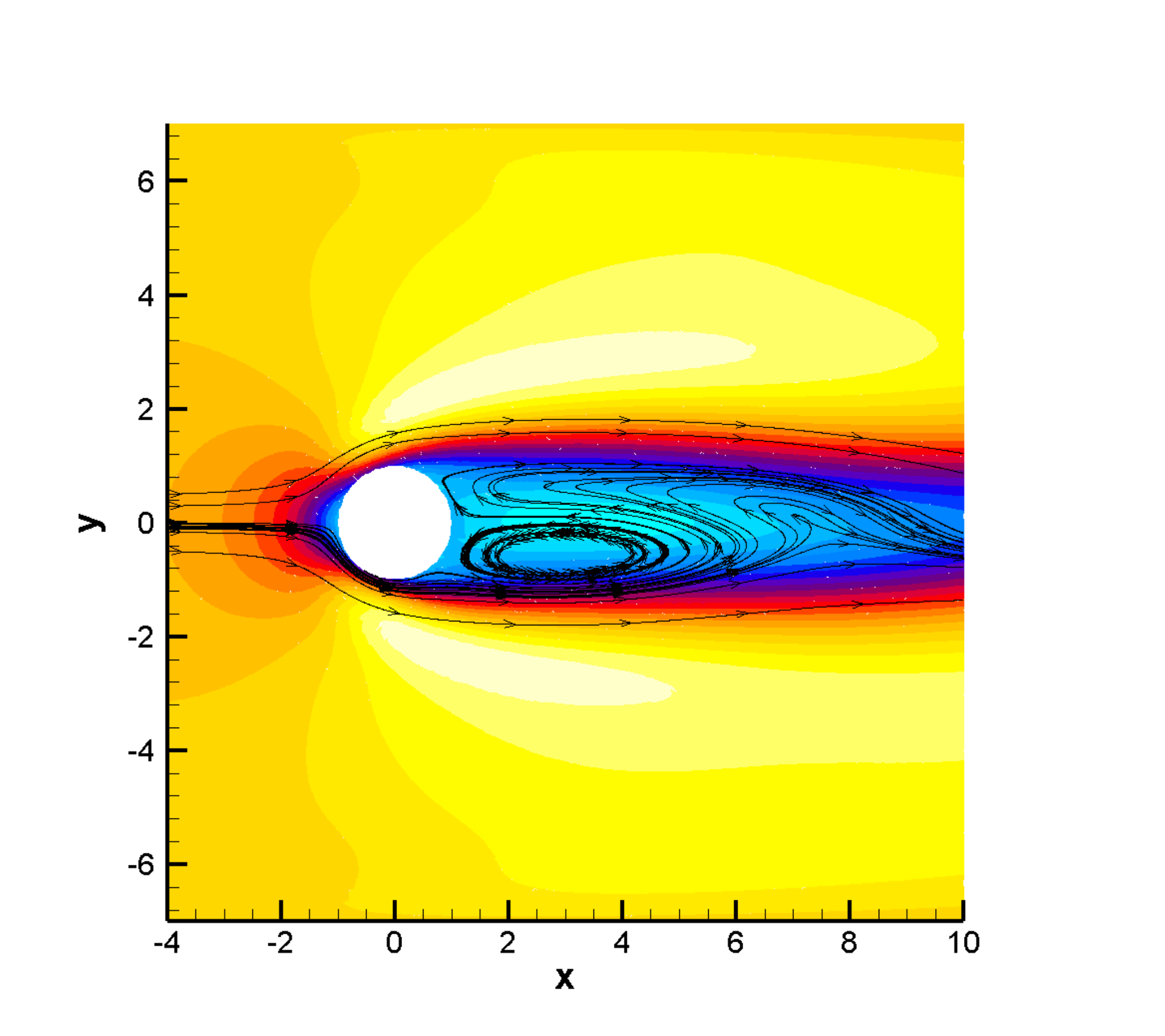} &
			\includegraphics[width=0.47\textwidth]{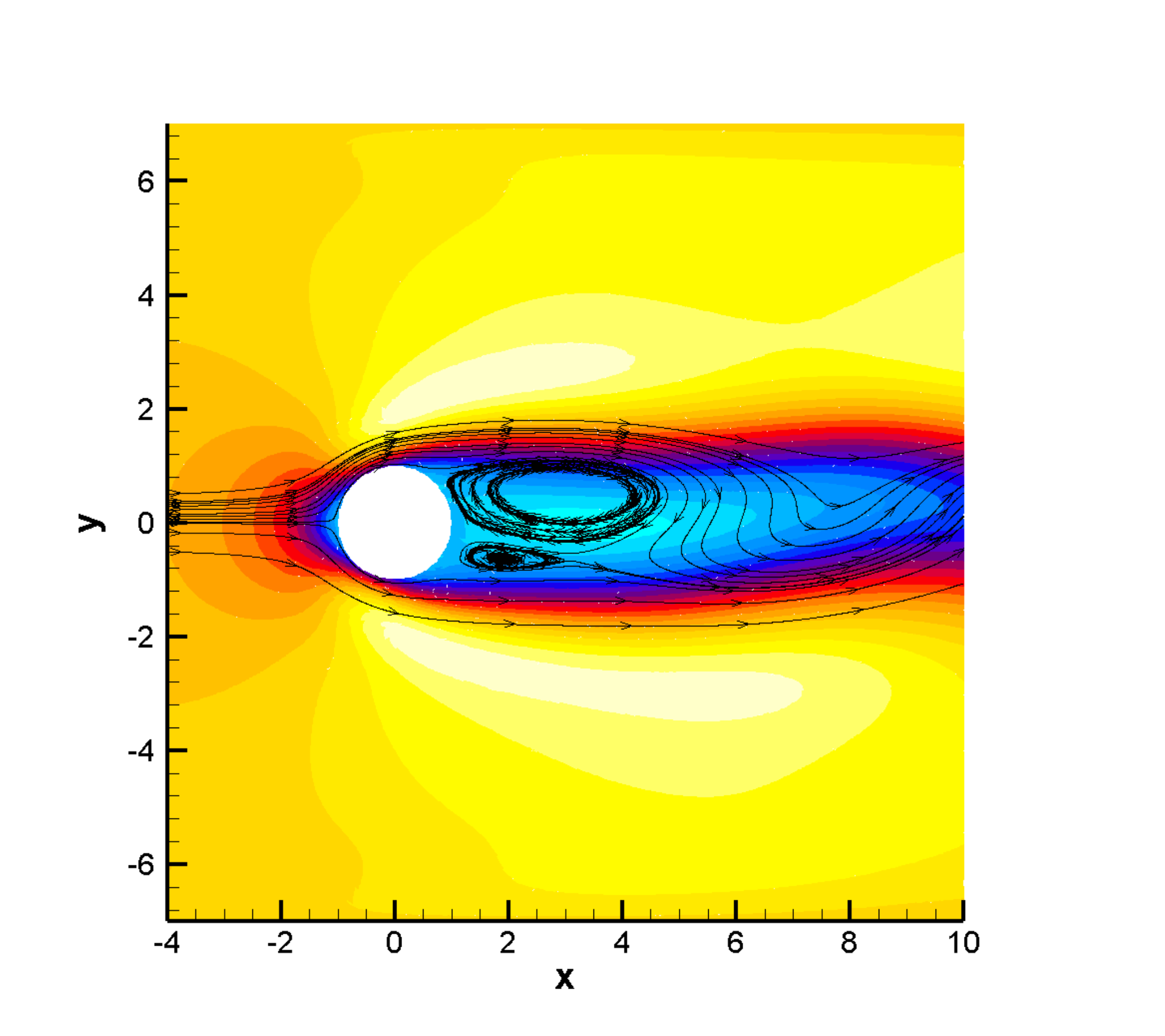} \\
		\end{tabular}
		\caption{Laminar flow over a cylinder. Stream-traces along the circular cylinder at times, $t=25$, $t=50$, $t=100$ and $t=200$ (from top left to bottom right). Color map of the horizontal velocity $u$ with 21 contour levels in the range $[-0.05;0.65]$.}
		\label{fig.CylinderINS_stream}
	\end{center}
\end{figure}

\begin{figure}[!htbp]
	\begin{center}
		\begin{tabular}{c} 
			\includegraphics[width=0.9\textwidth]{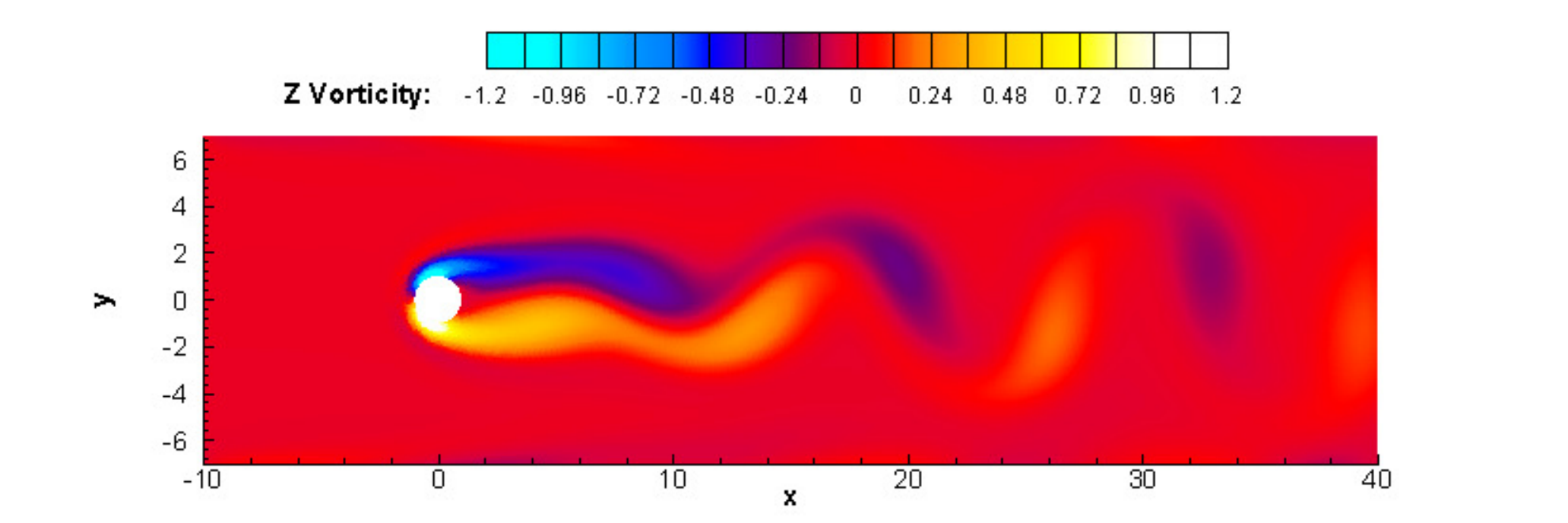} \\
			\includegraphics[width=0.9\textwidth]{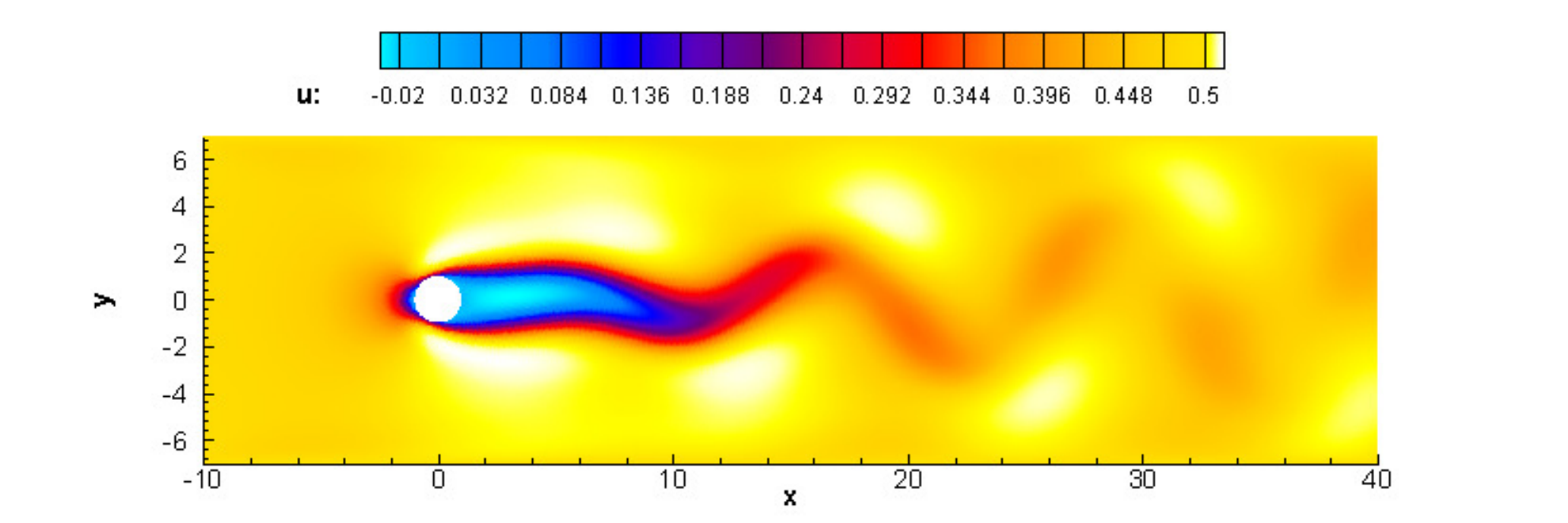} 
		\end{tabular}
		\caption{Laminar flow over a cylinder at time $t_f=300$. Map of the vorticity (top) and the horizontal velocity (bottom). }
		\label{fig.CylinderINS}
	\end{center}
\end{figure}

To give a quantitative interpretation, the same test case is also run using an inflow velocity of $\vv=(1,0)$, while keeping the same Reynolds number. The time evolution of the pressure is measured at the point $\xx=(15,0)$, from which we extract the associated frequency spectrum $f$. The results are depicted in Figure \ref{fig.CylinderPickPoint}, showing that we obtain a Strouhal number $\textit{St}=f r_c/u$ of $\textit{St}=0.1401$ for $u=0.5$ and $\textit{St}=0.1680$ for $u=1$ which is in reasonable good agreement with the value of $\textit{St}=0.1649$ reported in \cite{Qu2013}.

\begin{figure}[!htbp]
	\begin{center}
		\begin{tabular}{cc} 
			\includegraphics[width=0.49\textwidth]{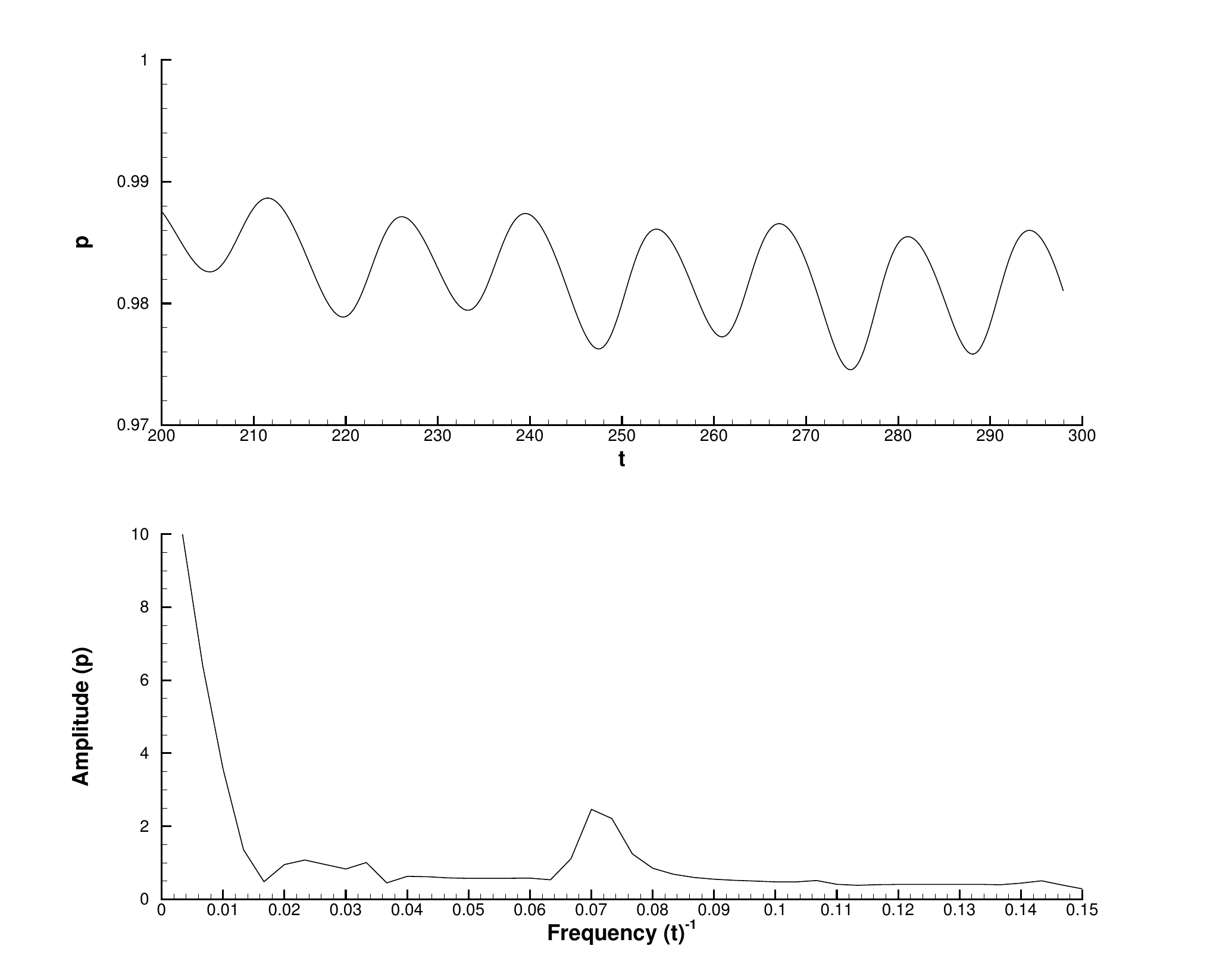} &
			\includegraphics[width=0.49\textwidth]{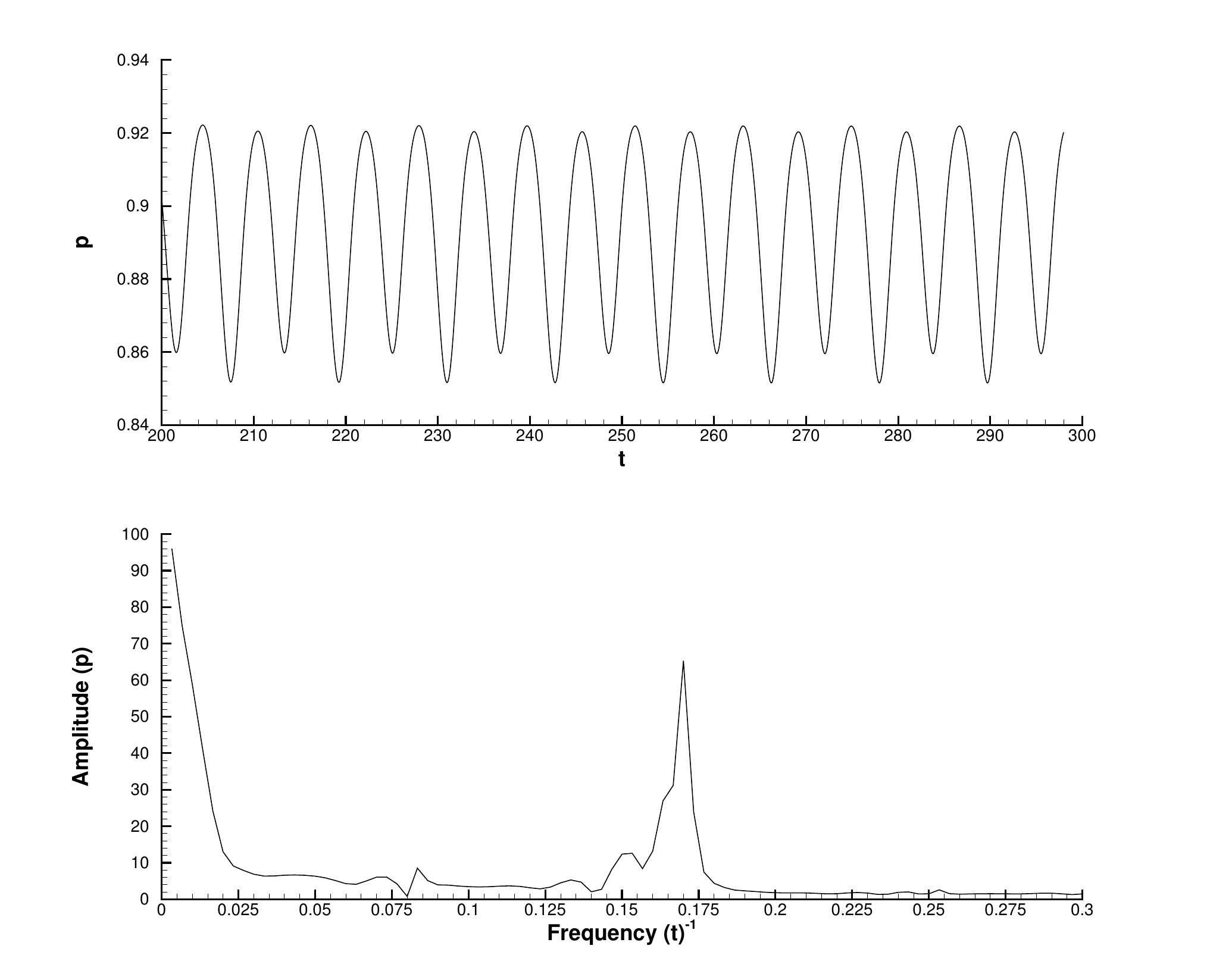} \\
		\end{tabular}
		\caption{Laminar flow over a cylinder at time $t_f=300$. Pressure and related frequency modes measured at $\xx=(15,0)$ for a flow with $\Rey=100$ and inflow velocity of $u=0.5$ (left) and $u=1$ (right).}
		\label{fig.CylinderPickPoint}
	\end{center}
\end{figure}

\section{Conclusions} \label{sec.concl}
The purpose of this work was to devise a new family of semi-implicit schemes on general polygonal meshes for the solution of multi-scale PDE. To this aim, the fast and the slow scales of the mathematical models are separated, so that a conservative shock capturing finite volume scheme is used to deal with the non-linear explicit convective terms, while the virtual element method is employed for discretizing the implicit sub-system. \textit{Ad hoc} projector operators have been designed to transfer the numerical solution from one approximation space to another. A high order numerical solution in space and time is obtained by a CWENO spatial reconstruction and an IMEX Runge-Kutta time integrator. Moreover, high order of accuracy is granted in the implicit solver by the VEM paradigm. Well-balancing and asymptotic-preservation properties of the first order semi-discrete scheme have been demonstrated. As prototype examples, we have considered two non-linear systems of hyperbolic PDE that govern the motion of incompressible fluids, namely the shallow water equations and the incompressible Navier-Stokes model. The novel numerical scheme have been thoroughly tested against a wide range of academic benchmarks, showing accuracy and robustness capabilities. The VEM approach naturally allows to achieve high order space discretizations on arbitrary shaped meshes, that is here combined with a robust finite volume solver for treating shock waves and strong discontinuities. The resulting numerical method is stable, accurate and robust, being suitable for the treatment of complex geometries as well.

In the future we plan to extend the approach presented in this work to compressible flows, along the lines of \cite{SICNS22}. Discontinuous Galerkin schemes in the framework of the virtual element space will also be investigated in order to devise a novel class of nonconforming virtual element methods. Finally, the development of purely VEM schemes in the context of semi-implicit time marching techniques is also foreseen, where even the convective sub-system is solved at the aid of the VEM paradigm, supplemented with suitable limiters to ensure stability and conservation at the same time.

\section*{Acknowledgments}

WB and MGC received financial support by Fondazione Cariplo and Fondazione CDP (Italy) under the project No. 2022-1895. WB and GB are members of the GNCS-INdAM (\textit{Istituto Nazionale di Alta Matematica}) group. AC and GB have been partially funded by the call ``Bando Giovani anno 2022 per progetti di ricerca finanziati con il contributo 5x1000 anno 2020" of the University of Ferrara. This work was partially carried out at the Institute des Mathematiques de Bordeaux (IMB, Bordeaux-France) during the visiting program of WB and MGC.

\appendix

\section{IMEX Runge-Kutta schemes} \label{app.IMEX}
The Butcher tableau for the IMEX Runge-Kutta schemes used in this work are reported hereafter. They have been derived in \cite{PR_IMEX,PR_IMEXHO,Boscarino22} and each IMEX scheme is described with a triplet $(s,\tilde{s},p)$ which characterizes the number $s$ of stages of the implicit method, the number $\tilde{s}$ of stages of the explicit method and the order $p$ of the resulting scheme. The acronym SA stands for Stiffly Accurate, LS implies L-Stability, while DIRK refers to Diagonally Implicit Runge-Kutta schemes.

\begin{itemize}
	\item SP(1,1,1)
	
	\begin{equation}
		\begin{array}{c|c}
			0 & 0 \\ \hline & 1
		\end{array} \qquad
		\begin{array}{c|c}
			1 & 1 \\ \hline & 1
		\end{array}
		\label{eqn.IMEX1}
	\end{equation}
	
	\item LSDIRK2(2,2,2) \hspace{0.2cm} $\gamma=1-1/\sqrt{2}$, \hspace{0.05cm} $\beta=1/(2\gamma)$
	
	\begin{equation}
		\begin{array}{c|cc}
			0 & 0 & 0 \\ \beta & \beta & 0 \\ \hline & 1-\gamma & \gamma
		\end{array} \qquad
		\begin{array}{c|cc}
			\gamma & \gamma & 0 \\ 1 & 1-\gamma & \gamma \\ \hline & 1-\gamma & \gamma
		\end{array}
		\label{eqn.IMEX2}
	\end{equation}
	
	\item SA DIRK (3,4,3) \hspace{0.2cm} $\gamma=0.435866$
	
	\begin{equation}
		\begin{array}{c|cccc}
			0 & 0 & 0 & 0 & 0 \\ \gamma & \gamma & 0 & 0 & 0 \\ 0.717933 & 1.437745 & -0.719812 & 0 & 0 \\ 1 & 0.916993 & 1/2 & -0.416993 & 0 \\ \hline  & 0 & 1.208496 & -0.644363 & \gamma
		\end{array} \qquad
		\begin{array}{c|cccc}
			\gamma & \gamma & 0 & 0 & 0 \\ \gamma & 0 & \gamma & 0 & 0 \\ 0.717933 & 0 & 0.282066 & \gamma & 0  \\ 1 & 0 & 1.208496 & -0.644363 & \gamma \\ \hline  & 0 & 1.208496 & -0.644363 & \gamma
		\end{array}
		\label{eqn.IMEX3}
	\end{equation}

\end{itemize} 
Notice that the first order scheme \eqref{eqn.IMEX1} corresponds to the implicit Euler method.

%
\bibliographystyle{elsarticle-num}
\bibliography{PAPER_FVVEM}

\end{document}